\theoremstyle{plain}
\newtheorem{theorem}{Theorem}[section]
\newtheorem{proposition}[theorem]{Proposition}
\newtheorem{lemma}[theorem]{Lemma}
\newtheorem{corollary}[theorem]{Corollary}
\newtheorem{example}[theorem]{Example}
\theoremstyle{definition}
\newtheorem{definition}[theorem]{Definition}
\theoremstyle{remark}
\newtheorem*{remark}{Remark}
\newtheorem{problem}[theorem]{Problem}
\newcommand{\injto}{\hookrightarrow}
\newcommand{\epito}{\twoheadrightarrow}
\newcommand{\Pol}{\operatorname{Pol}}
\newcommand{\End}{\operatorname{End}}
\newcommand{\Emb}{\operatorname{Emb}}
\newcommand{\Aut}{\operatorname{Aut}}
\newcommand{\Age}{\operatorname{Age}}
\newcommand{\rank}{{\operatorname{rank}}}
\newcommand{\cf}{\operatorname{cf}}
\newcommand{\scf}{\operatorname{scf}}
\newcommand{\restr}{\mathord{\upharpoonright}}
\newcommand{\bA}{\mathbf{A}}
\newcommand{\bB}{\mathbf{B}}
\newcommand{\bC}{\mathbf{C}}
\newcommand{\bF}{\mathbf{F}}
\newcommand{\bH}{\mathbf{H}}
\newcommand{\bT}{\mathbf{T}}
\newcommand{\bU}{\mathbf{U}}
\newcommand{\bV}{\mathbf{V}}
\newcommand{\bW}{\mathbf{W}}
\newcommand{\cA}{\mathcal{A}}
\newcommand{\cB}{\mathcal{B}}
\newcommand{\cC}{\mathcal{C}}
\newcommand{\cG}{{\mathcal{G}}}
\newcommand{\cU}{\mathcal{U}}
\newcommand{\uF}{\mathbb{F}}
\newcommand{\uS}{\mathbb{S}}
\newcommand{\ba}{\bar{a}}
\newcommand{\bb}{\bar{b}}
\newcommand{\bc}{\bar{c}}
\newcommand{\bQ}{\mathbb{Q}}
\newcommand{\bN}{\mathbb{N}}
\newcommand{\fA}{\mathfrak{A}}
\newcommand{\fB}{\mathfrak{B}}
\newcommand{\fC}{\mathfrak{C}}
\newcommand{\fD}{\mathfrak{D}}
\newcommand{\Ht}{\operatorname{Ht}}
\newcommand{\HE}{{\mathcal{E}}}
\newcommand{\Fraisse}{Fra\"\i{}ss\'e}
\title[Universal homomorphisms, universal structures,\dots]{Universal homomorphisms, universal structures, and the polymorphism clones of homogeneous structures}
\author{Christian Pech}
\address{Christian Pech\\Institut f\"ur Algebra\\Technische Universit\"at Dresden\\01062 Dresden\\Germany}
\email{Christian.Pech@tu-dresden.de}
\author{Maja Pech}
\address{Maja Pech\\Institut f\"ur Algebra\\Technische Universit\"at Dresden\\01062 Dresden\\Germany\\ and Department of Mathematics and Informatics\\University of Novi Sad\\Trg Dositeja Obradovi\'ca 4\\ 21000 Novi Sad\\Serbia}
\thanks{supported by the Ministry of Education and Science of the Republic of Serbia through Grant No.174018,  by the grant (Contract 114-451-1901/2011) of the the Secretariat of Science and Technological Development of the Autonomous Province of Vojvodina, and DAAD reinvitation scholarship.}
\email{Maja.Pech@tu-dresden.de, maja@dmi.uns.ac.rs}
\subjclass[2010]{08A05 (03C15,08C05,03C50,03C35)}
\keywords{universal structure, homogeneous structure, retract, polymorphism clone, cofinality, Bergman property $\aleph_0$-categoricity, oligomorphic permutation group}
\begin{document}

\begin{abstract}
Using a categorial version of \Fraisse's theorem due to Droste and G\"obel, we derive a criterion for a comma-category to have universal homogeneous objects. 

As a first application  we give  new existence result for universal structures and for $\aleph_0$-categorical universal structures. 

As a second application we  characterize the retracts of a large class of homogeneous structures, extending previous results by Bonato, Deli\'c, Dolinka, and Kubi\'s. 

As a third application we show for a large class of homogeneous structures that their polymorphism clone is generated by polymorphisms of bounded arity, generalizing  a classical result by Sierpi\'nski that the clone of all functions on a given set is generated by its binary part.    

Further we study the cofinality and the Bergman property for clones and we give sufficient conditions on a homogeneous structure to have a polymorphism clone that has uncountable cofinality and the Bergman property. 
\end{abstract}
\maketitle

\section{Introduction}

The mathematics in this paper is inspired by an idea of Richard Rado \cite{Rad64} to use universal functions in order to construct universal graphs and universal simplicial complexes. While the problem of proving or disproving the existence of universal objects in given classes of structures has been getting a considerable amount of attention in combinatorics (we just mention \cite{KomMekPac88,KomPac91,CheKom94,CheSheChi99,HubNes09}, but it is impossible to give a comprehensive overview of the relevant literature on this subject), it seems that  Rado's fundamental idea to study universal functions was never followed up upon --- perhaps because  it does not allow an obvious  generalization away from graphs and complexes towards a wider class of structures. Instead, for the construction of universal structures most often  the  theory of amalgamation classes and homogeneous structures due to \Fraisse{} and J\'onsson (\cite{Fra53,Jon56,Jon60}) is used, that does not have this handicap. 
 
On the first sight Rado's approach and the approach using \Fraisse-theory have little in common. However, if we change the perspective just slightly, this impression changes. From a categorial point of view, structures are objects and functions are morphisms of suitable categories. Moreover, morphisms can be objects of categories, too (of the so called comma-categories). So from this point of view the difference between functions and structures gets blurred. In \cite{DroGoe92} Droste and G\"obel developed the \Fraisse-J\'onsson theory for categories (this was generalized later on by Kubi\'s \cite{Kub07} to a wider class of categories). This allows to construct universal objects in a wide class of categories. The first result of our paper is a criterion that gives sufficient conditions for comma-categories to have  universal, homogeneous objects (cf. Theorem~\ref{mainconstruction}). In a sense this is the theoretical spine of this paper as it allows to construct universal morphisms which in turn are used in all subsequent results.

In Section~\ref{s4} the categorial results from Section~\ref{s6} are instantiated into the model-theoretic world. The notion of universal homogeneous homomorphisms is defined, and sufficient conditions for the existence of universal homomorphisms to a given structure within a given strict \Fraisse-class of structures are given (cf. Theorem~\ref{hom-constraints}). Moreover, Proposition~\ref{oligomorphic} gives sufficient conditions for the oligomorphy of the automorphism group of the domain of a universal homogeneous homomorphism. Universal homomorphisms are used to construct universal structures for several classes of model-theoretic structures. Here we understand the term universal in its strong sense --- that every element of the given class is isomorphic to an induced substructure of the universal structure. In particular we show:
\begin{enumerate}[(1)]
	\item\label{ex1} the class of countable wellfounded posets of bounded height contains a universal structure,
	\item\label{ex2} for every countable graph $\bH$ there is a universal countable $\bH$-colorable graph (a graph $\bA$ is called $\bH$-colorable if there exists a homomorphism from $\bA$ to $\bH$); moreover this structure can be chosen to be  $\aleph_0$-categorical whenever $\bH$ is finite or $\aleph_0$-categorical,
	\item\label{ex3} the class of all countable structures that are homomorphism equivalent with a given structure $\bH$ contains a universal structure; moreover it has an $\aleph_0$-categorical universal object if $\bH$ is finite or $\aleph_0$-categorical,
	\item\label{ex4} the class of all countable directed acyclic graphs contains an $\aleph_0$-categorical universal object.
\end{enumerate}
To our knowledge, for \eqref{ex1} and \eqref{ex3} the existence of a universal structure was not known before, while for \eqref{ex2} and \eqref{ex4} the existence of a universal object was known before (cf. \cite{HubNes09b}), but the existence of $\aleph_0$-categorical universal structures for these classes was not known. In this context it is noteworthy that till now all classes of countable structures in which the question about the existence of a universal object was settled, were elementary (i.e., axiomatizable in first order logics). In fact usually the considered classes are defined by a class of forbidden finite substructures. In contrast to this, e.g. the class of well-founded posets of bounded height can not be described by forbidding a set of finite substructures. Indeed, as it is not closed with respect to directed unions, this class is not elementary.  Thus our techniques allow the construction of universal structures for non-elementary classes of structures. 

In Section~\ref{s5} we demonstrate how the categorial theory of universal homogeneous morphisms developed in Section~\ref{s6} can be used to study the structure of the endomorphism monoids of countable homogeneous structures. The focus is on the classification of retracts of such structures (up to isomorphism). Our work extends previous work on this problem by Bonato, Deli\'c \cite{BonDel00}, Mudrinski \cite{Mud10}, Dolinka \cite{Dol12}, and Kubi\'s \cite{Kub13}. Of particular interest to us are  such retracts $\bB$ of a homogeneous structure $\bU$ that are induced by a universal homogeneous retraction $r:\bU\epito\bB$ (here universal homogeneous retractions are a special case of universal homogeneous homomorphisms introduced previously in Section~\ref{s4}). In Theorem~\ref{uhretract} for every countable homogeneous structure all such retracts are characterized. We further characterize all countable homogeneous structures with the property that  all retracts are induced by universal homogeneous retractions (cf. Corollary~\ref{allretsunivhom}). 

In the remainder of the paper we examine several aspects of the structure of the polymorphism clones of countable homogeneous structures. Clones on infinite sets have been studied by many authors including Goldstern, Pinsker, P\"oschel, Rosenberg, Shelah, and many more. Fields of application range through universal algebra, multivalued logic, model theory, set-theory, and theoretical computer science. For a survey on results about clones on infinite sets and for further references we refer to \cite{GolPin08}.

Section~\ref{genclones} is concerned with the problem of describing generating sets of polymorphism clones of countable homogeneous structures. A classical result by Sierpi\'nski states that the clone of all finitary functions on a set $A$ is generated by the set of binary functions on this set.  Our initial motivation was the question whether the polymorphism clone of the Rado graph has a generating set of bounded arity. The main result of this section is Theorem~\ref{clgen}. It identifies a class of countable homogeneous structures (including the Rado graph) whose polymorphism clone is generated by the binary polymorphisms (in fact it is shown that for these structures the polymorphism clone is generated by the homomorphic self embeddings together with one unary and one binary polymorphism). Several natural examples for such structures are given.

In Section~\ref{cofinality} we study an intriguing structural invariant of clones on infinite sets --- the cofinality. Roughly speaking, the cofinality of a clone is the smallest possible length of a chain of proper subclones that approximates a given clone. Cofinality-questions have previously been studied  for groups \cite{Ser74,KopTit74,Sab75,MacNeu90,Gou92,HodHodLasShe93,Tho97,DroGoe02,DroGoe05,DroTru09,DroTru11},  semigroups \cite{MalMitRus09,Dol11}, and other general algebraic structures \cite{Kop77,KopMcKMon84,GouMorTsi86}. If a clone has no generating set of bounded arity, then it follows that its cofinality is $\aleph_0$. Thus of interest for us are clones that have a generating set of bounded arity. Theorem~\ref{clcf} connects Theorem~\ref{clgen} with a result by Dolinka \cite{Dol11} to identify a class of homogeneous structures whose polymorphism clone has uncountable cofinality. It turns out that all examples given in Section~\ref{genclones} (including the Rado graph) have polymorphism clones with uncountable cofinality.

In \cite{Ber06} Bergman made the beautiful observation that the connected Cayley graphs of the full symmetric groups (over arbitrary sets) have finite diameter, and he also gave a short and  elegant proof of a result by Macpherson and Neumann, that the symmetric groups on infinite sets have uncountable cofinality. An infinite group whose connected Cayley graphs all have a finite diameter is said to have the Bergman property. In \cite{DroHol05}, the concept of strong cofinality is introduced (though this name for the property appears for the first time in \cite{DroGoe05}) and it is shown that a group has uncountable strong cofinality if and only if it has uncountable cofinality and the Bergman property. Meanwhile the concept of strong cofinality and the Bergman property have been studied thoroughly \cite{DroGoe05,DroHol05,Khe06,Tol06,Tol06b,Cor06,KecRos07,DroHolUlb08,Ros09}. In \cite{MalMitRus09} the Bergman property and the notion of strong cofinality are defined and studied for semigroups. It is shown that strong uncountable cofinality is the same as uncountable cofinality plus Bergman property for semigroups. Moreover, for many concrete semigroups the strong uncountable cofinality is shown. Further results about the Bergman property for semigroups can be found in \cite{MitPer11,Dol11}.  

In Section~\ref{sBergman} the Bergman property and the notion of strong cofinality are defined for clones. In Proposition~\ref{bergman} we show that a clone has uncountable strong cofinality if and only if it has uncountable cofinality and the Bergman property. The analogous result for groups was proved by Droste and Holland in \cite{DroHol05}, and for semigroups by Maltcev, Mitchell and Ru\v{s}kuc in\cite{MalMitRus09}. Finally, Theorem~\ref{newscf} characterizes a class of countable homogeneous  structures whose polymorphism clones have the Bergman property. Thereafter a number of examples (including the Rado graph) is given.

Before we start, let us finally fix some general notions and notations: 
This paper deals (mainly) with model theoretic structures. Whenever we do not state otherwise, a signature can contain any number of relational symbols, but only countably many functional and constant symbols. Structures will be denoted like $\bA,\bB,\bC,\dots$. There carriers are denoted like $A,B,C,\dots$. Tuples are denoted like $\ba,\bb,\bc$, and usually $a_i$ will denote the $i$-th coordinate of $\ba$, etc. For the basic model theoretic terminology we refer to \cite{Hod97}

For classes $\cA$, $\cB$ of structures we write $\cA\to\cB$ if for every $\bA\in\cA$ there exists a $\bB\in\cB$ and a homomorphism $f:\bA\to\bB$. Instead of $\{\bA\}\to\cB$ we write $\bA\to\cB$ and instead of $\cA\to \{\bB\}$ we write $\cA\to\bB$. An important special case is the notion $\bA\to\bB$ which means that there is a homomorphism from $\bA$ to $\bB$. If $\bA\to\bB$ and $\bB\to\bA$, then we call $\bA$ and $\bB$ \emph{homomorphism-equivalent}. 

If $\cA$ is a class of structures over a signature $R$, then by $(\cA,\to)$ and $(\cA,\injto)$ we will denote the categories of objects from $\cA$ with homomorphisms or embeddings as morphisms, respectively (note that throughout this paper we understand by an embedding a strong injective homomorphisms in the model-theoretic sense).

 The \emph{age} of a structure $\bA$ is the class of all finitely generated  structures that embed into $\bA$ (it is denoted by $\Age(\bA)$).

Let $\cC$ be a class of finitely generated  structures over the same signature. We say that $\cC$ has the \emph{Joint embedding property (JEP)} if whenever $\bA,\bB\in \cC$, then there exists a $\bC\in\cC$ such that both $\bA$ and $\bB$ are embeddable in $\bC$. Moreover, $\cC$ has the \emph{Hereditary property (HP)} if whenever $\bA\in \cC$, and $\bB$ is a finitely generated substructure of $\bA$,  then $\bB$ is isomorphic to an element of $\cC$. 

A basic theorem by Roland \Fraisse{} states that a class of finitely generated structures of the same type is the age of a countable structure if and only if it has only countably many isomorphism types, it has the (HP) and the (JEP). Therefore a class of finitely generated countable structures with these properties will be called an \emph{age}. If $\cC$ is an age, then by $\overline{\cC}$ we denote the class of all countable structures whose age is contained in $\cC$ (in \Fraisse's terminology, $\overline{\cC}$ contains all countable structures younger than $\cC$). 

Finally, recall that a  structure $\bU$ is called \emph{homogeneous} if every isomorphism between finitely generated substructures of $\bU$ extends to an automorphism of $\bU$. The standard examples of homogeneous structures are $(\bQ,\le)$, and the Rado graph (a.k.a. the countable random graph). If $\cC$ is the age of a countable homogeneous structure, then, besides the (HP) and the (JEP), $\cC$ has another property --- the amalgamation property. In general, an age $\cC$ is said to have the \emph{amalgamation property (AP)} if for all $\bA,\bB_1,\bB_2\in\cC$ and for all embeddings $f_1:\bA\injto\bB_1$ and $f_2:\bA\injto\bB_2$ there exists some $\bC\in\cC$ together with embeddings $g_1:\bB_1\injto\cC$ and $g_2:\bB_2\injto\bC$ such that the following diagram commutes:
	\[
		\begin{psmatrix}
		[name=A]\bA & [name=B1]\bB_1\\
		[name=B2]\bB_2 & [name=C]\bC.
		\ncline{H->}{A}{B1}^{f_1}%
		\ncline{H->}{A}{B2}<{f_2}%
		\ncline[linestyle=dashed]{H->}{B2}{C}^{g_2}%
		\ncline[linestyle=dashed]{H->}{B1}{C}<{g_1}%
		\end{psmatrix}
	\]
\begin{theorem}[\Fraisse{} (\cite{Fra53})]
  A class $\cC$ of finitely generated structures of the same type is the age of some
  countable homogeneous structure if and only if
  \begin{enumerate}[(i)]
  \item it has only countably many non-isomorphic members,
  \item it has the (HP), the (JEP), and the (AP).
  \end{enumerate}
  Moreover, any two countable homogeneous relational structures
  with the same age are isomorphic.\qed
\end{theorem}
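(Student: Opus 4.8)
The plan is to treat the three assertions—necessity, sufficiency, and uniqueness—separately, with back-and-forth arguments carrying the load in the latter two.

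For necessity, suppose $\cC=\Age(\bU)$ for a countable homogeneous $\bU$. Since $U$ is countable it has only countably many finitely generated substructures up to isomorphism, which gives (i), and (HP) is immediate because a finitely generated substructure of a member of $\cC$ is again a finitely generated substructure of $\bU$. For (JEP), given $\bA,\bB\in\cC$ I would fix embeddings into $\bU$ and take $\bC$ to be the substructure generated by the two images. For (AP), given $f_1:\bA\injto\bB_1$ and $f_2:\bA\injto\bB_2$, I fix embeddings $e_i:\bB_i\injto\bU$; then $e_1 f_1$ and $e_2 f_2$ are two embeddings of $\bA$ into $\bU$, so homogeneity, applied to the isomorphism between the two copies of $\bA$, supplies an automorphism $\alpha$ of $\bU$ with $\alpha e_1 f_1=e_2 f_2$. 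Taking $\bC$ to be the substructure generated by $\alpha e_1(\bB_1)$ and $e_2(\bB_2)$ completes the amalgam.

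For sufficiency, the key intermediate notion is the \emph{extension property}: $\bU$ has it if for every pair $\bA\subseteq\bB$ in $\cC$ and every embedding $g:\bA\injto\bU$ there is an embedding $\bB\injto\bU$ extending $g$. I would first check that a countable structure with $\Age(\bU)=\cC$ and the extension property is homogeneous, since any isomorphism between finitely generated substructures can be extended to an automorphism by a standard back-and-forth, using the extension property alternately to capture the next generator on each side. It then remains to construct such a $\bU$. I would build an increasing chain $\bA_0\subseteq\bA_1\subseteq\cdots$ in $\cC$ and set $\bU=\bigcup_n\bA_n$. Using (i) and (HP) one enumerates all amalgamation tasks—pairs $\bA\subseteq\bB$ from $\cC$ together with an embedding $g:\bA\injto\bA_n$—and at each step discharges one: given $g:\bA\injto\bA_n$, apply (AP) to the span $\bA_n\hookleftarrow\bA\hookrightarrow\bB$ to obtain $\bA_{n+1}\in\cC$ into which both $\bA_n$ and $\bB$ embed compatibly. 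A bookkeeping argument then guarantees that the resulting $\bU$ has the extension property, while (JEP) is used to ensure every member of $\cC$ actually embeds, so that $\Age(\bU)=\cC$.

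For uniqueness, let $\bU,\bV$ be countable homogeneous relational structures with the same age. I would build an isomorphism by back-and-forth: enumerate $U$ and $V$, maintain a finite partial isomorphism, and at each stage use the extension property (equivalently, homogeneity) of the opposite structure to absorb the next point. Here the relational hypothesis is essential, since it ensures that the substructures generated at each finite stage remain finite, so the partial maps stay finite and their union is a genuine isomorphism. The main obstacle I anticipate is the bookkeeping in the chain construction: one must enumerate countably many extension tasks that themselves proliferate as the chain grows—each new $\bA_{n+1}$ introduces fresh embeddings to be handled—and organize them by a diagonal enumeration so that none is deferred indefinitely. The verification that the extension property implies homogeneity is routine once the back-and-forth scheme is in place, and the necessity direction is essentially free of bookkeeping.
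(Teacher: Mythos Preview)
The paper does not actually prove this theorem: it is stated with a \qed marker and attributed to Fra\"\i{}ss\'e's original paper, so there is no in-paper proof to compare against. Your proposal is the standard textbook argument (necessity from homogeneity, construction of a weakly homogeneous limit via a bookkept chain of amalgamations, uniqueness by back-and-forth) and is correct.

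One minor correction: your claim that ``the relational hypothesis is essential'' for the uniqueness clause is not right. The back-and-forth works perfectly well with finitely generated substructures rather than finite ones: at each stage you maintain an isomorphism between finitely generated substructures of $\bU$ and $\bV$, adjoining one more generator keeps them finitely generated, and the extension property (equivalently weak homogeneity) lets you extend. Nothing in that argument requires the substructures to be finite. The restriction to relational structures in the uniqueness clause as stated is most likely a historical artifact of Fra\"\i{}ss\'e's original setting; the result holds under the paper's standing hypotheses on the signature (arbitrary relational part, countably many function and constant symbols).
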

If $\bU$ is a homogeneous structure with age $\cC$ then we call $\bU$ a \emph{\Fraisse-limit} of $\cC$. Moreover, ages that have the (AP) will  be called \emph{\Fraisse-classes}.

At last let us stress that that this paper is staged in ZFC. We did not mark the places where the axiom of choice is actually used. For us, cardinals are special ordinals (the initial ordinals). Moreover we use the von-Neumann-style definition of ordinals as hereditarily transitive sets. 
In particular, an ordinal number $\alpha$ is a set that consists of all ordinal numbers smaller than $\alpha$, and is well-ordered by the $\in$-relation.

\section{Universal homogeneous objects in comma categories}\label{s6}
The main result of this section will be a theorem that gives sufficient conditions for a comma-category to contain a universal homogeneous object. This result relies on a categorical version of \Fraisse's theorem due to Droste and G\"obel  \cite{DroGoe92}.

For the convenience of the reader, this part is kept relatively self-contained. For basic notions from category theory we refer to \cite{HCA1}.

Usually, if $h$ is a morphism from an object $A$ to an object $B$ in a given category $\fC$, then we will simply write $h:A\to B$ whenever the category $\fC$ is clear from the context. In cases when there is a danger of confusion, we will write $h\in\fC(A\to B)$, instead.

\subsection{Comma-categories}
Recall the definition of a comma category:
\begin{definition}
   Let  $\fA$,$\fB$,$\fC$ be categories, let $F:\fA\to\fC$, $G:\fB\to\fC$ be functors. The comma category $(F \downarrow G)$ has as objects triples $(A,f,B)$ where $A\in\fA$, $B\in\fB$, $f:FA\to GB$. The morphisms from $(A,f,B)$ to $(A',f',B')$ are pairs $(a,b)$  such that $a\in\fA(A\to A')$, $b\in\fB(B\to B')$ such that the following diagram commutes:
\[
	\begin{psmatrix}
 		FA & GB\\
		FA' & GB'
		\ncline{->}{1,1}{1,2}^{f}
		\ncline{->}{2,1}{2,2}^{f'}
		\ncline{->}{1,1}{2,1}<{Fa}
		\ncline{->}{1,2}{2,2}<{Gb}
	\end{psmatrix}
\]
\end{definition}
There are two projection functors $U:(F \downarrow G)\to \fA$ and $V:(F \downarrow G)\to \fB$ defined by $U:(A,f,B)\mapsto A, (a,b)\mapsto a$ and $V:(A,f,B)\mapsto B, (a,b)\mapsto b$. Moreover there is a canonical natural transformation $\alpha: F U\Rightarrow G V$ defined by $\alpha_{(A,f,B)}=f$ (cf. \cite[Prop.1.6.2]{HCA1}). Finally, the comma-categories have the following universal property:
\begin{proposition}[{\cite[Prop.1.6.3]{HCA1}}]\label{univcomma}
	With the notions from above, let $\fD$ be another category, let $U':\fD\to\fA$, $V':\fD\to\fB$ be functors, and let $\alpha':FU'\Rightarrow GV'$ be a natural transformation. Then there exists a unique functor $W:\fD\to(F \downarrow G)$ such that $UW=U'$, $VW=V'$, $\alpha*W=\alpha'$ (where $\alpha*W$ denotes the horizontal composition of $\alpha$ and $W$, i.e. $\alpha'_D=\alpha_{WD}$ for all $D\in\fD$).\qed
\end{proposition}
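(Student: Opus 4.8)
The plan is to construct the functor $W$ explicitly, first on objects and then on morphisms, verify functoriality together with the three defining equations, and finally establish uniqueness by observing that the equations leave no freedom in the choice of $W$. On objects, given $D\in\fD$ the data $U'D\in\fA$, $V'D\in\fB$, and the component $\alpha'_D\colon F(U'D)\to G(V'D)$ form precisely a triple of the kind that constitutes an object of $(F\downarrow G)$, so I would set $WD:=(U'D,\alpha'_D,V'D)$. On morphisms, given $d\colon D\to D'$ in $\fD$, I would set $Wd:=(U'd,V'd)$, using the images of $d$ under the two functors as the two components of the pair.

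The one step that requires genuine verification, and which I expect to be the conceptual crux, is that the pair $(U'd,V'd)$ really is a morphism $WD\to WD'$ in the comma category. By the definition of morphisms in $(F\downarrow G)$ this amounts to the commutativity of the square whose top edge is $\alpha'_D$, bottom edge $\alpha'_{D'}$, left edge $F(U'd)$, and right edge $G(V'd)$. But this square is exactly the naturality square of the natural transformation $\alpha'\colon FU'\Rightarrow GV'$ evaluated at $d$. Thus the naturality of $\alpha'$ is precisely what makes $W$ well defined on morphisms; this is the heart of the matter. Once this is granted, functoriality of $W$ (preservation of identities and of composition) reduces coordinatewise to the functoriality of $U'$ and $V'$, since composition and identities in $(F\downarrow G)$ are computed componentwise in $\fA$ and $\fB$.

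The three equations are then immediate from the construction: reading off the first coordinate of the triple gives $UW=U'$, reading off the third gives $VW=V'$, and the definition $\alpha_{(A,f,B)}=f$ of the canonical transformation yields $(\alpha*W)_D=\alpha_{WD}=\alpha'_D$, that is, $\alpha*W=\alpha'$. For uniqueness, suppose $W'$ is any functor satisfying the same three equations. Writing $W'D=(A,f,B)$ for an object $D$, the condition $UW'=U'$ forces $A=U'D$, the condition $VW'=V'$ forces $B=V'D$, and $\alpha*W'=\alpha'$ forces $f=\alpha_{W'D}=\alpha'_D$; hence $W'D=WD$. The identical reading-off of coordinates applied to a morphism shows $W'd=Wd$, so $W'=W$, completing the proof.
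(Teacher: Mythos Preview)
Your proof is correct and is exactly the standard argument. Note, however, that the paper does not give its own proof of this proposition: it is stated with a citation to Borceux \cite[Prop.~1.6.3]{HCA1} and closed with \qed, so there is nothing to compare against. Your argument is precisely the one found in the cited reference.
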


Let $\fC$ be a category, $\fD$ be a small category, and let $F:\fD\to\fC$ be a functor. Recall that a \emph{compatible cocone of $F$} is a pair $(S,(f_X)_{X\in\fD})$ where $S\in\cC$, $f_X:FX\to S$ such that for all $h\in\fD(X\to Y)$ the following diagram commutes:
\[
\begin{psmatrix}[colsep=1cm]
	& [name=S]S \\
	[name=FX]FX & & [name=FY]FY
	\ncline{->}{FX}{FY}^{Fh}
	\ncline{->}{FX}{S}<{f_X}
	\ncline{->}{FY}{S}>{f_Y}
\end{psmatrix}
\]
A compatible cocone $(S,(f_X)_{X\in\fD})$ for $F$ is called \emph{weakly limiting cocone} if for every other compatible cocone $(T,(g_X)_{X\in\fD})$ of $F$ there exists a morphism $m:S\to T$ such that for all $X\in\fD$ the following diagram commutes:
\[
	\begin{psmatrix}[colsep=1cm]
	[name=S]S & & [name=T]T\\
	& [name=FX]FX
	\ncline{->}{S}{T}^{m}
	\ncline{->}{FX}{S}<{f_X}
	\ncline{->}{FX}{T}>{g_X}
	\end{psmatrix}
\]
The morphism $m$ is called a \emph{mediating morphism}. The object $S$ is called a \emph{weak colimit of $F$}. 
If for every compatible cocone $(T,(g_X)_{X\in\fD})$ of $F$ there is a unique mediating morphism $m:S\to T$, then $(S,(f_X)_{X\in\fD})$ is called a \emph{limiting cocone of $F$}. In this case, $S$ is called a \emph{colimit of $F$}. 

The following lemmata show how arrow categories behave with respect to (weak) colimits. 
\begin{lemma}\label{weak}
	With the notions from above, let $\fD$ be a small category, and let $H:\fD\to (F \downarrow G)$. Suppose that
	\begin{enumerate}
		\item $U H$ has a weakly limiting cocone $(L,(p_D)_{D\in\fD})$,
		\item $V H$ has a compatible cocone $(M,(q_D)_{D\in\fD})$,
		\item $(FL,(Fp_D)_{D\in\fD})$ is a weakly limiting cocone of $FUH$.
	\end{enumerate}
	Then there is a morphism $h: FL\to GM$ such that $((L,h,M), (p_D,q_D)_{D\in\fD})$ is a compatible cocone for $H$. In case that $(FL,(Fp_D)_{D\in\fD})$ is a limiting cocone of $FUH$, $h$ is unique.
\end{lemma}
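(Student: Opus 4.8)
The plan is to construct the morphism $h\colon FL\to GM$ by invoking the weakly limiting property of the cocone $(FL,(Fp_D)_{D\in\fD})$ against a competing cocone built out of the data of $H$. First I would unravel what $H$ gives us: for each object $D\in\fD$ we have $HD=(A_D,f_D,B_D)$ with $UHD=A_D$, $VHD=B_D$, and $f_D\colon FA_D\to GB_D$; for each morphism $d\in\fD(D\to D')$ we have $H d=(a_d,b_d)$ with $Ua_d$, $Vb_d$ forming commuting squares. The key observation is that the family $\bigl(GM,(Gq_D\circ f_D)_{D\in\fD}\bigr)$ is a compatible cocone of the functor $FUH$ into $\fC$: indeed the morphisms $Gq_D\circ f_D\colon FA_D\to GM$ satisfy, for each $d\in\fD(D\to D')$, the compatibility $(Gq_{D'}\circ f_{D'})\circ F(Ua_d)=Gq_D\circ f_D$. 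This follows by pasting the commuting square coming from $Hd$ being a morphism in $(F\downarrow G)$ (namely $f_{D'}\circ F(Ua_d)=G(Vb_d)\circ f_D$) with the compatibility of the cocone $(M,(q_D)_{D\in\fD})$ for $VH$ (namely $q_{D'}\circ Vb_d=q_D$, whence $Gq_{D'}\circ G(Vb_d)=Gq_D$ after applying $G$).

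Once this is established, I would apply hypothesis (3): since $(FL,(Fp_D)_{D\in\fD})$ is a weakly limiting cocone of $FUH$ and $\bigl(GM,(Gq_D\circ f_D)_{D\in\fD}\bigr)$ is another compatible cocone of $FUH$, there exists a mediating morphism $h\colon FL\to GM$ satisfying $h\circ Fp_D=Gq_D\circ f_D$ for every $D\in\fD$. This equation is exactly the statement that the square witnessing $(p_D,q_D)$ as a morphism from $HD=(A_D,f_D,B_D)$ to $(L,h,M)$ in $(F\downarrow G)$ commutes. I would then verify that $\bigl((L,h,M),(p_D,q_D)_{D\in\fD}\bigr)$ is a compatible cocone for $H$: each $(p_D,q_D)$ is a legitimate morphism in $(F\downarrow G)$ by the commuting square just produced, and the cocone compatibility $(p_{D'},q_{D'})\circ Hd=(p_D,q_D)$ reduces componentwise to $p_{D'}\circ Ua_d=p_D$ and $q_{D'}\circ Vb_d=q_D$, which are precisely the compatibility conditions of the cocones $(L,(p_D))$ for $UH$ and $(M,(q_D))$ for $VH$.

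For the uniqueness clause, if $(FL,(Fp_D)_{D\in\fD})$ is a genuine limiting (not merely weakly limiting) cocone of $FUH$, then the mediating morphism $h$ produced above is the \emph{unique} morphism satisfying $h\circ Fp_D=Gq_D\circ f_D$ for all $D$, so $h$ is determined outright.

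The main obstacle I anticipate is purely bookkeeping: correctly identifying which commuting square comes from which hypothesis and pasting them in the right order to verify that $\bigl(GM,(Gq_D\circ f_D)_{D\in\fD}\bigr)$ really is compatible. The functoriality of $G$ (so that $G$ preserves composition of the $\fB$-morphisms) must be used at exactly the point where the $VH$-cocone compatibility is transported into $\fC$, and the only subtlety is making sure $F$ is applied to the $\fA$-side data and $G$ to the $\fB$-side data consistently throughout. No deep categorical input is needed beyond the definitions of comma category and (weakly) limiting cocone; the entire argument is a diagram chase whose nontrivial content is the transfer of compatibility through the two projection functors.
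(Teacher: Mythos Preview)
Your proposal is correct and follows essentially the same route as the paper: the paper packages your $f_D$ as the components $\alpha'_D$ of the natural transformation $\alpha*H$, observes that $(GM,(Gq_D\circ\alpha'_D)_{D\in\fD})$ is a compatible cocone of $FUH$, obtains $h$ as a mediating morphism, and then checks the cocone condition componentwise exactly as you do. Your write-up is in fact slightly more explicit about why the competing cocone is compatible.
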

\begin{proof}
	Recall $\alpha:FU\Rightarrow GV$ with $\alpha_{(A,f,B)}=f$  is a natural transformation. Consider $\alpha':=\alpha * H:FUH\Rightarrow GVH$ given by $\alpha'_D=\alpha_{HD}$. Then $(GM,(Gq_D\circ\alpha'_D)_{D\in \fD})$ is a compatible cocone for $FUH$. Hence there is a morphism  $h: FL\to GM$ such that for every $D\in \fD$ we have $Gq_D\circ\alpha'_D=h\circ Fp_D$. So indeed, $(p_D,q_D):HD\to (L,h,M)$. In case that $(FL,(Fp_D)_{D\in\fD})$ is a limiting cocone of $FUH$, there is a unique such $h$ so that  $(p_D,q_D):HD\to(L,h,M)$ is a morphism. 
	
	Let $d\in\fD(D\to D')$. Then $HD=(UHD, \alpha'_D, VHD)$ and $HD'=(UHD',\alpha'_{D'},VHD')$. Moreover, $Hd=(UHd,VHd)$. So we have that the following diagram commutes:
	\[
		\begin{psmatrix}[rowsep=1cm]
			&[name=FUHD]FUHD & [name=GVHD]GVHD\\
			[name=FL]FL & & & [name=GM]GM\\
			&[name=FUHD']FUHD' & [name=GVHD']GVHD'\\[-1cm]
			\ncline{->}{FUHD}{GVHD}^{\alpha'_D}
			\ncline{->}{FUHD'}{GVHD'}^{\alpha'_{D'}}
			\ncline{->}{FUHD}{FUHD'}<{FUHd}
			\ncline{->}{GVHD}{GVHD'}<{GVHd}
			\ncline{->}{FUHD}{FL}^{Fp_D}
			\ncline{->}{FUHD'}{FL}_{Fp_{D'}}
			\ncline{->}{GVHD}{GM}^{Gq_D}
			\ncline{->}{GVHD'}{GM}_{Gq_{D'}}
			\ncarc[arcangle=-90]{->}{FL}{GM}^h
		\end{psmatrix}
	\]
	It follows that $(p_D,q_D)=(p_{D'},q_{D'})\circ Hd$.  This shows that $((L,h,M),(p_D,q_D)_{D\in\fD})$ is a compatible cocone for $H$.
\end{proof}

\begin{lemma}\label{colimits}
	With the notions from above, let $\fD$ be a small category, and let $H:\fD\to (F \downarrow G)$. Suppose that
	\begin{enumerate}
		\item $U H$ has a limiting cocone $(L,(p_D)_{D\in\fD})$,
		\item $V H$ has a limiting cocone $(M,(q_D)_{D\in\fD})$,
		\item $(FL,(Fp_D)_{D\in\fD})$ is a limiting cocone of $FUH$,
	\end{enumerate}
	Then there is a unique morphism $h:FL\to GM$ such that $((L,h,M),(p_D,q_D)_{D\in\fD})$ is a compatible cocone of $H$. Moreover, this is a limiting cocone for $H$.
\end{lemma}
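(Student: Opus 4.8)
The plan is to split the statement into its two assertions and to observe that the first one is already essentially proved. Indeed, the existence and uniqueness of a morphism $h:FL\to GM$ turning $((L,h,M),(p_D,q_D)_{D\in\fD})$ into a compatible cocone for $H$ is exactly the final clause of Lemma~\ref{weak}: hypotheses (1)--(3) here are strictly stronger than those of Lemma~\ref{weak} (limiting cocones are in particular weakly limiting, and a limiting cocone of $VH$ yields a compatible cocone of $VH$), and the hypothesis that $(FL,(Fp_D)_{D\in\fD})$ is \emph{limiting} is precisely what forces $h$ to be unique. So I would invoke Lemma~\ref{weak} for this part, recording for later use the defining property of $h$, namely $h\circ Fp_D=Gq_D\circ\alpha'_D$ for every $D\in\fD$, where $\alpha'_D=\alpha_{HD}$. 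All the genuinely new work then lies in the \emph{Moreover} clause: that this compatible cocone is limiting for $H$.

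To prove that, I would take an arbitrary compatible cocone $((L',h',M'),(a_D,b_D)_{D\in\fD})$ of $H$, where each $(a_D,b_D)$ is a comma-morphism $HD\to(L',h',M')$, and produce a unique mediating morphism $(a,b):(L,h,M)\to(L',h',M')$. Applying the projection functor $U$ shows that $(L',(a_D)_{D\in\fD})$ is a compatible cocone of $UH$; since $(L,(p_D)_{D\in\fD})$ is limiting for $UH$ by hypothesis (1), there is a unique $a\colon L\to L'$ with $a\circ p_D=a_D$ for all $D$. Symmetrically, applying $V$ and using hypothesis (2) yields a unique $b\colon M\to M'$ with $b\circ q_D=b_D$. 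The candidate mediating morphism is $(a,b)$, and its uniqueness will follow at once from the uniqueness of $a$ and of $b$, since the $U$- and $V$-components of any mediating morphism must be mediating morphisms for $UH$ and $VH$ respectively.

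The one thing that must still be checked---and this is the main obstacle---is that the pair $(a,b)$ is an honest morphism of the comma category, i.e.\ that the square $Gb\circ h=h'\circ Fa$ commutes; this is exactly where hypothesis (3) is indispensable. I would verify the identity by showing that both sides are mediating morphisms from $(FL,(Fp_D)_{D\in\fD})$ to the same compatible cocone of $FUH$ with target $GM'$, and then appeal to uniqueness. Concretely, for each $D$ one computes $Gb\circ h\circ Fp_D=Gb\circ Gq_D\circ\alpha'_D=G(b\circ q_D)\circ\alpha'_D=Gb_D\circ\alpha'_D$, using the defining property of $h$, while $h'\circ Fa\circ Fp_D=h'\circ F(a\circ p_D)=h'\circ Fa_D$. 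Because $(a_D,b_D)$ is a comma-morphism $HD=(UHD,\alpha'_D,VHD)\to(L',h',M')$, its commuting square reads $Gb_D\circ\alpha'_D=h'\circ Fa_D$, so the two computations agree for every $D$. Thus $Gb\circ h$ and $h'\circ Fa$ both restrict along the $Fp_D$ to the same family, and since $(FL,(Fp_D)_{D\in\fD})$ is a limiting---hence mediating-morphism-unique---cocone of $FUH$, they coincide. This establishes that $(a,b)$ is a morphism in $(F\downarrow G)$, completing the verification that $((L,h,M),(p_D,q_D)_{D\in\fD})$ is a limiting cocone of $H$.
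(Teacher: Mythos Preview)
Your proof is correct and follows essentially the same approach as the paper's own proof: both invoke Lemma~\ref{weak} for the existence and uniqueness of $h$, then for the limiting property take an arbitrary compatible cocone, use hypotheses (1) and (2) to produce the unique candidates for the two components of the mediating morphism, and verify the comma-square $Gb\circ h=h'\circ Fa$ by precomposing both sides with the $Fp_D$ and invoking the uniqueness clause of hypothesis (3). The only differences are notational (the paper writes $(p'_D,q'_D)$ and $(r,s)$ where you write $(a_D,b_D)$ and $(a,b)$) and in the presentation of the key computation (the paper runs it as a single chain of equalities, you compute each side separately and match them via the comma-square for $(a_D,b_D)$).
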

\begin{proof}
	From Lemma~\ref{weak} it follows that there is a morphism $h:FL\to GM$ such that $((L,h,M),(p_D,q_D)_{D\in\fD})$ is a compatible cocone for $H$. The uniqueness of $h$ follows from Lemma~\ref{weak}, too. So it remains to show that $((L,h,M),(p_D,q_D)_{D\in\fD})$ is a limiting cocone for $H$. 
	
	Let $((L',h',M'), (p'_D,q'_D)_{D\in\fD})$ be another compatible cocone for $H$. Then $(L',(p'_D)_{D\in\fD})$ is a compatible cocone for $UH$, and $(M',(q'_D)_{D\in\fD})$ is a compatible cocone for $VH$. Hence, there are unique morphisms $r:L\to L'$ and $s:M\to M'$ such that $r\circ p_D=p'_D$ and $s\circ q_D=q'_D$, for all $D\in\fD$. We will show that $(r,s):(L,h,M)\to (L',h',M')$ is the unique mediating morphism. First we need to show that it is a morphism at all: For this, we use that $(FL,(Fp_D)_{D\in\fD})$ is a limiting cocone for $FUH$. Consider the following diagram:
	\[
		\begin{psmatrix}
		& [name=FL] FL & [name=GM] GM\\
		& [name=FL'] FL' & [name=GM'] GM'\\
		[name=FUHD]FUHD & & & [name=GVHD] GVHD
		\ncline{->}{FL}{GM}^h
		\ncline{->}{FL'}{GM'}^{h'}
		\ncline{->}{FUHD}{GVHD}^{\alpha_D}
		\ncline{->}{FL}{FL'}<{Fr}
		\ncline{->}{GM}{GM'}>{Gs}
		\ncline{->}{FUHD}{FL'}^{Fp'_D}
		\ncline{->}{GVHD}{GM'}^{Gq'_D}
		\ncarc[arcangle=30]{->}{FUHD}{FL}<{Fp_D}
		\ncarc[arcangle=-30]{->}{GVHD}{GM}>{Gq_D}
		\end{psmatrix}
	\]
	The lower quadrangle commutes, because $(p'_D,q'_D)$ is a morphism. We already saw that the two triangles commute. Note that  $(GM', (Gs\circ h\circ Fp_D)_{D\in\fD})$ is a compatible cocone for $FUH$ with the mediating morphism $Gs\circ h$. Now we compute
	\begin{align*}
		h'\circ Fr\circ Fp_D &= h'\circ Fp'_D\\
		&= Gq'_D\circ\alpha_D \\
		&= Gs\circ Gq_D\circ\alpha_D\\
		&= Gs\circ h \circ Fp_D.
	\end{align*}
	Hence, $h'\circ Fr$ is another mediating morphism and we conclude that $h'\circ Fr= Gs\circ h$ and hence $(r,s)$ is a morphism. Since the two triangles of the above given diagram commute, it follows that $(r,s)$ is mediating. Let us show the uniqueness of $(r,s)$:
	
	Suppose that $(r',s')$ is another mediating morphism. Then $U(r',s')=r'$ is a mediating morphism between $(L, (p_D)_{D\in\fD})$ and $(L', (p'_D)_{D\in\fD})$, and $V(r',s')=s'$ is a mediating morphism between $(M, (q_D)_{D\in\fD})$ and $(M', (q'_D)_{D\in\fD})$. Hence $r=r'$ and $s=s'$.
\end{proof}

The following easy corollary is going to be useful later on when  from a colimit of $H:\fD\to(F\downarrow G)$ the colimits of $UH$ and $VH$ have to be deduced. 
\begin{corollary}\label{colimcor}
	With the notions from above, let $\fD$ be a small category, and let $H:\fD\to (F \downarrow G)$. Suppose that
	\begin{enumerate}
		\item $H$ has a limiting cocone $((L,h,M),(p_D,q_D)_{D\in\fD})$
		\item $U H$ has a limiting cocone $(L',(p'_D)_{D\in\fD})$,
		\item $V H$ has a limiting cocone,
		\item $(FL',(Fp'_D)_{D\in\fD})$ is a limiting cocone of $FUH$,
	\end{enumerate}
	Then $(L,(p_D)_{D\in\fD})$ is a limiting cocone for $UH$ and $(M,(q_D)_{D\in\fD})$ is a limiting cocone for $VH$.
\end{corollary}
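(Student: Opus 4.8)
The plan is to manufacture a \emph{second} limiting cocone for $H$ out of the data already in hand for $UH$ and $VH$, and then to transport the desired conclusion across the essentially unique isomorphism between the two limiting cocones of $H$. First I would name the limiting cocone of $VH$ guaranteed by hypothesis~(3), say $(M'',(q''_D)_{D\in\fD})$. Together with hypothesis~(2) (the limiting cocone $(L',(p'_D)_{D\in\fD})$ of $UH$) and hypothesis~(4) (that $(FL',(Fp'_D)_{D\in\fD})$ is a limiting cocone of $FUH$), these are precisely the three hypotheses of Lemma~\ref{colimits}. Applying that lemma produces a unique morphism $h':FL'\to GM''$ such that $((L',h',M''),(p'_D,q''_D)_{D\in\fD})$ is a limiting cocone for $H$.

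Now both $((L,h,M),(p_D,q_D)_{D\in\fD})$ (hypothesis~(1)) and $((L',h',M''),(p'_D,q''_D)_{D\in\fD})$ are limiting cocones of the \emph{same} diagram $H$. Since any two limiting cocones of a diagram are connected by a unique cocone-compatible isomorphism, there is an isomorphism $(r,s):(L,h,M)\to(L',h',M'')$ in $(F\downarrow G)$ with $(r,s)\circ(p_D,q_D)=(p'_D,q''_D)$ for every $D\in\fD$. Reading this componentwise via the composition law of the comma category gives $r\circ p_D=p'_D$ and $s\circ q_D=q''_D$, where $r:L\to L'$ and $s:M\to M''$ are isomorphisms in $\fA$ and $\fB$ respectively.

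It then remains to transport the limit property along these component isomorphisms. Because $(L',(p'_D)_{D\in\fD})$ is a limiting cocone of $UH$ and $r$ is an isomorphism with $r\circ p_D=p'_D$, the cocone $(L,(p_D)_{D\in\fD})$ is again limiting for $UH$: for any compatible cocone $(T,(g_D)_{D\in\fD})$ the unique mediating morphism through $L'$ precomposed with $r$ yields a mediating morphism through $L$, and its uniqueness is inherited from that of the morphism through $L'$ after composing with $r^{-1}$. The verbatim argument using $s$ shows that $(M,(q_D)_{D\in\fD})$ is a limiting cocone for $VH$.

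I expect the only genuinely delicate point to be the bookkeeping: confirming that the isomorphism of cocones coming from uniqueness of limits really is componentwise compatible with the $p_D$ and $q_D$ separately (so that transport along $r$ and along $s$ can be carried out independently), and that this transport preserves both the existence and the uniqueness of mediating morphisms. Beyond that, everything reduces to one invocation of Lemma~\ref{colimits} together with the standard fact that any object carrying a cocone-compatible isomorphism to a limit is itself a limit.
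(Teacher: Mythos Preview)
Your proposal is correct and follows essentially the same route as the paper: invoke Lemma~\ref{colimits} on the limiting cocones of $UH$ and $VH$ to obtain a second limiting cocone of $H$, then use the unique isomorphism between the two limiting cocones of $H$---read componentwise in the comma category---to transport the limit property to $(L,(p_D)_{D\in\fD})$ and $(M,(q_D)_{D\in\fD})$. The only cosmetic difference is that the paper takes the comparison isomorphism in the opposite direction (from the newly constructed cocone to the given one) and leaves the final transport step implicit.
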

\begin{proof}
	Suppose, that $(M',(q'_D)_{D\in\fD})$ is a limiting cocone for $VH$. Then, by Lemma~\ref{colimits}, there exists a unique morphism $h':FL'\to GM'$ such that $\big((L',h',M'),(p'_D,q'_D)_{D\in\fD}\big)$ is a limiting cocone for $H$. Hence, there exists a unique isomorphism $(p,q):(L',h',M')\to(L,h,M)$ that mediates between the two limiting cocones. I.e., for every $D\in\fD$, we have $(p,q)\circ(p_i',q_i')=(p_i,q_i)$. In particular, $p$ and $q$ are isomorphisms, and for all $D\in\fD$ we have $p\circ p'_i=p_i$ and $q\circ q'_i=q_i$. 
	Hence $(L,(p_d)_{D\in\fD})$ is a limiting cocone for $UH$ and $(M,(q_D)_{D\in\fD})$ is a limiting cocone for $VH$.
\end{proof}

\subsection{Algebroidal categories}
The notion of algebroidal categories goes back to  Banaschewski and Herrlich \cite{BanHer76}. We need this concept in order to be able  to make use of the category-theoretic version of \Fraisse's theorem due to Droste and G\"obel \cite{DroGoe92}. We closely follow the exposition from   \cite{DroGoe92}.

Let $\lambda$ be a regular cardinal. Let us consider $\lambda$ as a category (the objects are all ordinals $i<\lambda$, and there is a unique morphism from $i$ to $j$ whenever $i\le j$). 

A \emph{$\lambda$-chain} in $\fC$ is a functor from $\lambda$ to $\fC$. An object $A$ of $\fC$ is called \emph{$\lambda$-small} if whenever $(S,(f_i)_{i<\lambda})$ is a  limiting cocone of a $\lambda$-chain $F$ in $\fC$, and $h:A\to S$, then there exists a $j<\lambda$, and a morphism $g:A\to F(j)$ such that $h=f_j\circ g$. With $\fC_{<\lambda}$ we will denote the full subcategory of $\fC$ whose objects are all $\lambda$-small objects of $\fC$. The category $\fC$ will be called \emph{semi-$\lambda$-algebroidal} if for every $\mu\le\lambda$, all $\mu$-chains in $\fC_{<\lambda}$ have a colimit in $\fC$, and if every object of $\fC$ is the colimit of a $\lambda$-chain in $\fC_{<\lambda}$. Moreover, $\fC$ will be called \emph{$\lambda$-algebroidal} if 
\begin{enumerate}
	\item it is semi-$\lambda$-algebroidal, 
	\item $\fC_{<\lambda}$ contains at most $\lambda$ isomorphism classes of objects, and 
	\item between any two objects from $\fC_{<\lambda}$ there are at most $\lambda$ morphisms. 
\end{enumerate}
The following example describes the most paradigmatic examples of $\lambda$-algebroidal categories in this paper:
\begin{example}
	Let $\bA$ be a countable structure. Consider the category $\fA:=(\overline{\Age(\bA)},\injto)$ that has as objects all countable structures $\bB$ whose age is contained in $\Age(\bA)$ and as morphisms all embeddings between these structures. Then $\fA$ is $\aleph_0$-algebroidal and $\fA_{<\aleph_0}$ is the full subcategory of $\fA$ that is induced by $\Age(\bA)$. This follows directly from the observation that $\overline{\Age(\bA)}$ is closed with respect to directed unions of structures from $\Age(\bA)$, and that every structure is the union of its finitely generated substructures. Moreover between any two finitely generated structures there exist at most $\aleph_0$ homomorphism as every homomorphism is uniquely determined by its restriction to a generating set. 
\end{example}

The next example describes a class of $\lambda$-algebroidal categories that are in some sense degenerate. However, they will play an important role later on:
\begin{example}
	Consider a category $\fA$ that has just one object $T$ and such that all morphisms are isomorphisms (i.e. $\fA$ is essentially a group) morphisms. We claim that then $\fA$ is semi-$\lambda$-algebroidal for every regular cardinal $\lambda$. 
	
	Let us first show that $T$ is $\lambda$-small. Let $(T,(t_i)_{i<\lambda})$ be a limiting cocone of a $\lambda$-chain $H$ in $\fA$. Let $h:T\to T$. Let $i<\lambda$. Define $h':= t_i^{-1}\circ h$. Then $h= t_i\circ h'$.
	
	Now let us show that $\fA$ has colimits of $\mu$-chains for all $\mu\le\lambda$. Let $H$ be a $\mu$-chain in $\fA$. We define $t_i:H_i\to T$ by $t_i:=H(0,i)^{-1}$, for all $i<\mu$. We claim that $(T,(t_i)_{i<\mu})$ is a limiting cocone for $H$. First we show that it is compatible: Let $i<j<\mu$. Then, since $H(0,j)=(H(i,j)\circ H(0,i)$, we have 
	\[t_j=H(0,j)^{-1} = H(0,i)^{-1}\circ H(i,j)^{-1}= t_i\circ H(i,j)^{-1}.\]
	Hence we can compute
	\[ t_j\circ H(i,j) = t_i\circ H(i,j)^{-1}\circ H(i,j) = t_i.\]
	Let now $(T,(s_i)_{i<\mu}$ be another compatible cocone for $H$. We claim that $s_0:t\to T$ is a mediating cocone from $(T,(t_i)_{i<\mu})$ to $(T,(s_i)_{i<\mu})$. Indeed, for $i<\mu$ we compute
	\[ s_0\circ t_i = s_0\circ H(0,i)^{-1} = s_i\circ H(0,i)\circ H(0,i)^{-1}= s_i.\]
	Thus, indeed, $\fA$ has colimits of $\mu$-chains. 
	
	If we consider the trivial $\lambda$-chain $H$ that is defined by $Hi=T$ (for all $i<\lambda$) and $H(i,j)=1_T$ (for all $i<j<\lambda$). Then we see that $T$ is a colimit of $H$. Thus $\fA$ is semi-$\lambda$-algebroidal.
	
	If $\fA$ has at most $\lambda$ morphisms, then we even have that $\fA$ is $\lambda$-algebroidal. 	 
\end{example}

\begin{lemma}\label{chainext}
	Let $\fC$ be a semi-$\lambda$-algebroidal category all of whose morphisms are monos, and, for some   $\mu<\lambda$, let $F:\mu\to \fC$ be a $\mu$-chain such that for every $i<\mu$ we have $Fi\in\fC_{<\lambda}$. Let $S$ be a colimit of $F$ in $\fC$. Then $S\in\fC_{<\lambda}$.
\end{lemma}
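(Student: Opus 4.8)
The plan is to unwind the definition of $\lambda$-smallness directly. So I suppose $(T,(f_i)_{i<\lambda})$ is a limiting cocone of some $\lambda$-chain $G\colon\lambda\to\fC$ and that $h\colon S\to T$ is a morphism, and I must produce an index $j<\lambda$ together with a morphism $g\colon S\to G(j)$ such that $h=f_j\circ g$. To exploit the hypothesis on $S$, I fix a limiting cocone $(S,(s_i)_{i<\mu})$ witnessing that $S$ is a colimit of $F$, so that $s_{i'}\circ F(i,i')=s_i$ for all $i\le i'<\mu$.

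First I would factor $h$ through the chain $G$ piece by piece. For each $i<\mu$ the composite $h\circ s_i\colon Fi\to T$ is a morphism out of the $\lambda$-small object $Fi$, so $\lambda$-smallness of $Fi$ yields an index $j_i<\lambda$ and a morphism $g_i\colon Fi\to G(j_i)$ with $h\circ s_i=f_{j_i}\circ g_i$. Here the regularity of $\lambda$ enters decisively: since there are only $\mu<\lambda$ many indices $i$, the supremum $j:=\sup_{i<\mu}j_i$ is still $<\lambda$. Replacing each $g_i$ by $g_i':=G(j_i,j)\circ g_i\colon Fi\to G(j)$ and using the cocone compatibility $f_j\circ G(j_i,j)=f_{j_i}$, I obtain a single index $j<\lambda$ together with morphisms $g_i'\colon Fi\to G(j)$ satisfying $f_j\circ g_i'=h\circ s_i$ for every $i<\mu$.

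Next I would check that $(G(j),(g_i')_{i<\mu})$ is itself a compatible cocone of $F$; this is the one place where the hypothesis that all morphisms are monos is essential. For $i\le i'<\mu$ I compute $f_j\circ(g_{i'}'\circ F(i,i'))=h\circ s_{i'}\circ F(i,i')=h\circ s_i=f_j\circ g_i'$, and since $f_j$ is a mono I may cancel it to get $g_{i'}'\circ F(i,i')=g_i'$. As $S$ is a colimit of $F$, its universal property then supplies a unique mediating morphism $g\colon S\to G(j)$ with $g\circ s_i=g_i'$ for all $i$. Finally, both $f_j\circ g$ and $h$ mediate from $(S,(s_i)_{i<\mu})$ to the compatible cocone $(T,(h\circ s_i)_{i<\mu})$ of $F$ — indeed $(f_j\circ g)\circ s_i=f_j\circ g_i'=h\circ s_i$ — so the uniqueness of the mediating morphism forces $h=f_j\circ g$. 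This exhibits the required factorization, proving $S\in\fC_{<\lambda}$.

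The two steps that look routine but are genuinely load-bearing, and which I expect to be the crux, are: (a) the regularity bound $\sup_{i<\mu}j_i<\lambda$, which is precisely what lets one single target $G(j)$ absorb all the partial factorizations; and (b) the mono-cancellation that upgrades the family $(g_i')$ into an honest cocone, since without monos the $g_i'$ need not cohere over the transition maps $F(i,i')$ and the whole argument would collapse. (The degenerate case $\mu=0$, where $S$ is initial, is handled separately by taking $j=0$ and invoking initiality of $S$, using that $0\in\lambda$ so $G(0)$ exists.)
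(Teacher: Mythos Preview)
Your proof is correct and follows essentially the same route as the paper's: factor each $h\circ s_i$ through the $\lambda$-chain using $\lambda$-smallness of $Fi$, collect the indices into a single $j<\lambda$ via regularity, use the mono hypothesis to verify that the resulting family is a compatible cocone, and conclude by uniqueness of the mediating morphism out of the colimit $S$. The paper's argument is identical in structure (with different variable names), though it does not separately discuss the degenerate case $\mu=0$.
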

\begin{proof}
	Let $(S,(s_i)_{i<\mu})$ be a limiting cocone of $F$. Let $H:\lambda\to\fC$ be a $\lambda$-chain with limiting cocone $(T,(t_j)_{j<\lambda})$. Finally, let $f:S\to T$. 
	
	For every $i<\mu$, since $Fi\in\fC_{<\lambda}$, there exists $j(i)<\lambda$ and $g_i:Fi\to Hj(i)$ such that $f\circ s_i=t_{j(i)}\circ g_i$. Let $j:=\bigcup_{i<\mu} j(i)$. Since $\lambda$ is a  regular cardinal, we have $j<\lambda$. 
	
	Now, for all $i<\mu$, define $\hat{s}_i:= H(j(i),j)\circ g_i$ (here $(j(i),j)$ is the morphism from $j(i)$ to $j$ in $\lambda$). We will show that $(Hj,(\hat{s}_i)_{i<\mu})$ is a compatible cocone for $F$: Let $i'>i$ such that $i'<\mu$. We compute
	\begin{align}\label{star}
		t_j\circ \hat{s}_i &= t_j\circ H(j(i),j)\circ g_i = t_{j(i)}\circ g_i = f\circ s_i\\
		&= f\circ s_{i'}\circ F(i,i') = t_{j(i')}\circ g_{i'}\circ F(i,i')\notag\\
		& = t_j\circ H(j(i'),j)\circ g_{i'}\circ F(i,i')=t_j\circ \hat{s}_{i'}\circ F(i,i').\notag
	\end{align}  
	Since $t_j$ is a mono, we conclude that $\hat{s}_i = \hat{s}_{i'}\circ F(i,i')$. Thus indeed $(Hj,(\hat{s}_i)_{i<\mu})$ is a compatible cocone for $F$. It follows that there is a mediating morphism $\hat{f}:S\to Hj$ between the limiting cocone $(S,(s_i)_{i<\mu})$ and the compatible cocone $(Hj,(\hat{s}_i)_{i<\mu})$. In other words, for all $i<\mu$ we have $\hat{f}\circ s_i = \hat{s}_i$. 
	
	Observe that $(T,(t_j\circ\hat{s}_i)_{i<\mu})$ is another compatible cocone of $F$ and that $t_j\circ\hat{f}$ is a mediating morphism between $(S,(s_i)_{i<\mu})$ and $(T,(t_j\circ\hat{s}_i)_{i<\mu})$. On the other hand, in \eqref{star}, we already computed that $f\circ s_i=t_j\circ\hat{s}_i$, for all $i<\mu$. Hence $f$ is another mediating morphism between the mentioned cocones. Since $(S,(s_i)_{i<\mu})$ is limiting, it follows that $f=t_j\circ \hat{f}$. This finishes the proof that $S$ is $\lambda$-small. 
\end{proof}
\begin{definition}
	Let now $\fA$, $\fC$ be categories, and let $\lambda$ be a cardinal. Then we call a functor $F:\fA\to\fC$ \emph{$\lambda$-cocontinuous} if whenever $(S,(s_i)_{i<\lambda})$ is a limiting cocone of a $\lambda$-chain $H$ in $\fA$, then $(FS,(Fs_i)_{i<\lambda})$ is a limiting cocone of $F H$.

	If, in addition, $\lambda$ is regular, $\fA$ is semi-$\lambda$-algebroidal, and if $\mu\le\lambda$. Then we call $F:\fA\to\fC$ \emph{$\mu_{<\lambda}$-cocontinuous} if whenever $(S,(s_i)_{i<\mu})$ is a limiting cocone of a $\mu$-chain $H$ of $\lambda$-small objects in $\fA$, then $(FS,(Fs_i)_{i<\mu})$ is a limiting cocone of $F H$.
\end{definition}

Let us  have a look onto $\lambda$-small objects in comma-categories. 
\begin{lemma}\label{smallthings}
	Let $\fA$, $\fB$, $\fC$ be categories,  such that $\fA$ and $\fB$ have colimits of  $\lambda$-chains and such that all morphisms of $\fB$ are monomorphisms. Let $F:\fA\to\fC$ be $\lambda$-cocontinuous, and let $G:\fB\to\fC$ any functor that preserves monomorphisms. 
	
	Let  $(A,f,B)$ be an object of $(F \downarrow G)$ such that  $A$ is $\lambda$-small in $\fA$ and $B$ is $\lambda$-small in $\fB$. Then $(A,f,B)$ is $\lambda$-small in $(F \downarrow G)$.
\end{lemma}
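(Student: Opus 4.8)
The plan is to verify $\lambda$-smallness straight from the definition. So let $H\colon\lambda\to(F\downarrow G)$ be a $\lambda$-chain with limiting cocone $((L,e,M),(p_i,q_i)_{i<\lambda})$, write $H(i)=(UH(i),\alpha'_i,VH(i))$ with $\alpha'_i:=\alpha_{H(i)}$ its structure morphism, and let $(a,b)\colon(A,f,B)\to(L,e,M)$ be an arbitrary morphism. I must find $j<\lambda$ and a morphism $(a'',b'')\colon(A,f,B)\to H(j)$ with $(a,b)=(p_j,q_j)\circ(a'',b'')$.

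The first step is to recognise the projections of the given cocone as limiting cocones of the projected chains. This is precisely Corollary~\ref{colimcor}: hypothesis (1) holds since $((L,e,M),(p_i,q_i))$ is limiting by assumption; hypotheses (2) and (3) hold because $\fA$ and $\fB$ have colimits of $\lambda$-chains, so $UH$ and $VH$ have limiting cocones; and hypothesis (4) holds because $F$ is $\lambda$-cocontinuous, carrying the limiting cocone of $UH$ to a limiting cocone of $FUH$. The corollary then yields that $(L,(p_i)_{i<\lambda})$ is a limiting cocone for $UH$ in $\fA$ and $(M,(q_i)_{i<\lambda})$ is a limiting cocone for $VH$ in $\fB$.

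Next I use smallness in each component. Since $a\colon A\to L$ and $A$ is $\lambda$-small, there are $j_1<\lambda$ and $a'\colon A\to UH(j_1)$ with $a=p_{j_1}\circ a'$; since $b\colon B\to M$ and $B$ is $\lambda$-small, there are $j_2<\lambda$ and $b'\colon B\to VH(j_2)$ with $b=q_{j_2}\circ b'$. Setting $j:=\max(j_1,j_2)<\lambda$ and transporting along the chain via $a'':=UH(j_1,j)\circ a'$ and $b'':=VH(j_2,j)\circ b'$, the cocone identities $p_j\circ UH(j_1,j)=p_{j_1}$ and $q_j\circ VH(j_2,j)=q_{j_2}$ give $a=p_j\circ a''$ and $b=q_j\circ b''$.

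The only genuinely nontrivial point, and the step I expect to be the main obstacle, is checking that $(a'',b'')$ is actually a morphism of the comma category $(A,f,B)\to H(j)$, that is, that $\alpha'_j\circ Fa''=Gb''\circ f$. Here the monomorphism hypotheses are indispensable. Combining the morphism identity $e\circ Fp_j=Gq_j\circ\alpha'_j$ for $(p_j,q_j)$ with the morphism identity $e\circ Fa=Gb\circ f$ for $(a,b)$, and using $Fa=Fp_j\circ Fa''$ and $Gb=Gq_j\circ Gb''$, I compute
\[
	Gq_j\circ\alpha'_j\circ Fa''=e\circ Fp_j\circ Fa''=e\circ Fa=Gb\circ f=Gq_j\circ Gb''\circ f.
\]
Since every morphism of $\fB$ is a monomorphism and $G$ preserves monomorphisms, $Gq_j$ is a monomorphism and may be cancelled on the left, giving $\alpha'_j\circ Fa''=Gb''\circ f$. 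Thus $(a'',b'')$ is a morphism, and by construction $(p_j,q_j)\circ(a'',b'')=(a,b)$, which is the desired factorization; hence $(A,f,B)$ is $\lambda$-small. Without the assumption that $\fB$-morphisms are monic (and that $G$ preserves this), there would be no way to pass from the commutativity of the square for $(a,b)$ to that for $(a'',b'')$, so the hypothesis is used exactly at this cancellation.
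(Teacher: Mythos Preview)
Your proof is correct and follows essentially the same approach as the paper's own proof: both invoke Corollary~\ref{colimcor} to identify the projected cocones as limiting, factor $a$ and $b$ separately using $\lambda$-smallness of $A$ and $B$, move to a common index, and then verify the comma-square for the candidate pair by cancelling the monomorphism $Gq_j$. The only cosmetic difference is that the paper says ``without loss of generality $i=j$'' where you explicitly transport along the chain to $j=\max(j_1,j_2)$.
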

\begin{proof}
	Let $H:\lambda\to (F \downarrow G)$ be a $\lambda$-chain with limiting cocone $((L,h,M),(a_i,b_i)_{i<\lambda})$. Let $(a,b):(A,f,B)\to(L,h,M)$ be a morphism. Since $\fA$ and $\fB$ have colimits of $\lambda$-chains, it follows that $U H$, and $V H$ have colimits, and since $F$ is $\lambda$-cocontinuous, by Corollary~\ref{colimcor}, we have that $(L,(a_i)_{i<\lambda})$ is a limiting cocone of $U H$, and $(M,(b_i)_{i<\lambda})$ is a limiting cocone of $V H$. Since $A$ is $\lambda$-small in $\fA$ and $a:A\to L$, there exists an $i<\lambda$, and a morphism $\hat{a}:A\to UHi$ such that $a_i\circ\hat{a}=a$. Also, since $B$ is $\lambda$-small in $\fB$, and since $b:B\to M$, it follows that there exists some $j<\lambda$ and a morphism $\hat{b}:B\to VHj$ such that $b_j\circ\hat{b}=b$. Without loss of generality, $i=j$. Consider the following diagram:
	\[
		\begin{psmatrix}
			[name=FL]FL & & & [name=GM]GM\\
			& [name=FA]FA & [name=GB]GB\\
			[name=FUHi]FUHi & & & [name=GVHi]GVHi
			\ncline{->}{FL}{GM}^h
			\ncline{->}{FA}{GB}^f
			\ncline{->}{FUHi}{GVHi}^{\alpha_{Hi}}
			\ncline{->}{FA}{FUHi}^{F\hat{a}}
			\ncline{>->}{GB}{GVHi}^{G\hat{b}}
			\ncline{->}{FA}{FL}^{Fa}
			\ncline{>->}{GB}{GM}^{Gb}
			\ncline{->}{FUHi}{FL}<{Fa_i}
			\ncline{>->}{GVHi}{GM}<{Gb_i}
		\end{psmatrix}
	\]   
	where $Hi=(UHi,\alpha_{Hi},VHi)$. By the assumptions, the upper quadrangle and the two triangles of this diagram commute. We compute
	\begin{align*}
		Gb_i\circ \alpha_{Hi}\circ F\hat{a} &= h\circ Fa_i\circ 	F\hat{a} & \text{(since $ (a_i,b_i):Hi\to (L,h,M)$)}\\
		&=h\circ Fa\\
		&=Gb\circ f\\
		&=Gb_i\circ G\hat{b}\circ f
	\end{align*} 
	Since $Gb_i$ is a monomorphism, we conclude that $G\hat{b}\circ f=\alpha_{Hi}\circ F\hat{a}$ whence the whole diagram commutes. Consequently, $(\hat{a},\hat{b}):(A,f,B)\to Hi$ and $(a_i,b_i)\circ(\hat{a},\hat{b})=(a,b)$.
\end{proof}
Such $\lambda$-small objects $(A,f,B)$ in $(F \downarrow G)$ for which $A$ and $B$ are $\lambda$-small in $\fA$ and $\fB$, respectively, will be called \emph{inherited $\lambda$-small objects}.

\begin{lemma}\label{retractsmall}
Let $\fC$ be a semi-$\lambda$-algebroidal category, let $A$ be a $\lambda$-small object of $\fC$. Moreover, let $B\in\fC$, $\epsilon:A\epito B$, $\iota:B\injto A$, such that $\epsilon\circ \iota = 1_B$ (i.e., $B$ is a retract of $A$). Then $B$ is $\lambda$-small, too.
\end{lemma}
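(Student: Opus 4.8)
The plan is to verify the definition of $\lambda$-smallness for $B$ directly, reducing it to the already-assumed $\lambda$-smallness of $A$ by transporting the relevant test morphism back and forth along the retraction pair $(\iota,\epsilon)$. Note that the semi-$\lambda$-algebroidality of $\fC$ plays no active role beyond supplying the ambient category; the whole argument is a one-step diagram chase using the identity $\epsilon\circ\iota=1_B$.

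First I would unwind the goal. Let $(S,(f_i)_{i<\lambda})$ be a limiting cocone of an arbitrary $\lambda$-chain $F\colon\lambda\to\fC$, and let $h\colon B\to S$ be an arbitrary morphism. To show that $B$ is $\lambda$-small I must produce an index $j<\lambda$ together with a morphism $g\colon B\to F(j)$ satisfying $h=f_j\circ g$.

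The key step is to push $h$ up to $A$ and apply the smallness hypothesis there. Consider the composite $h\circ\epsilon\colon A\to S$. Since $A$ is $\lambda$-small and $(S,(f_i)_{i<\lambda})$ is a limiting cocone of the $\lambda$-chain $F$, there exist an index $j<\lambda$ and a morphism $g'\colon A\to F(j)$ with
\[
	h\circ\epsilon = f_j\circ g'.
\]
Now I would define the desired factorization of $h$ by precomposing with the section, namely $g:=g'\circ\iota\colon B\to F(j)$, and simply compute
\[
	f_j\circ g = f_j\circ g'\circ\iota = h\circ\epsilon\circ\iota = h\circ 1_B = h,
\]
where the third equality uses $\epsilon\circ\iota=1_B$. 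This exhibits $g$ as the required morphism and completes the verification that $B$ is $\lambda$-small.

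I expect no genuine obstacle here: the only point that deserves a word is that the definition of $\lambda$-small quantifies over limiting cocones of $\lambda$-chains, and I am applying it to the single morphism $h\circ\epsilon$ against such a cocone, which is exactly the permitted situation. No regularity of $\lambda$, and no use of colimits in $\fC$, is needed for this particular lemma.
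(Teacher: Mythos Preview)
Your proof is correct and follows essentially the same approach as the paper: compose with $\epsilon$ to transport the test morphism to $A$, use $\lambda$-smallness of $A$ to factor through some $F(j)$, then compose with $\iota$ and use $\epsilon\circ\iota=1_B$ to recover the desired factorization of $h$.
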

\begin{proof}
	Let $H:\lambda\to\fC$ be a $\lambda$-chain with limiting cocone $(S,(s_i)_{i<\lambda})$. Let $f:B\to S$. Then $f\circ\epsilon:A\to S$. Hence, there exists an $i<\lambda$, and a $g:A\to Hi$ such that $f\circ\epsilon= s_i\circ g$. It follows that $f=f\circ\epsilon\circ\iota=s_i\circ g\circ\iota$. Hence $B$ is $\lambda$-small.
\end{proof}

\begin{lemma}\label{inherited}
	With the notions from above, if in $(F \downarrow G)$ every object is the colimit of a $\lambda$-chain of inherited $\lambda$-small objects, then every $\lambda$-small object of $(F \downarrow G)$ is inherited.
\end{lemma}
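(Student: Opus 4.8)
The plan is to use the standard fact that a $\lambda$-small object which arises as the colimit of a $\lambda$-chain must be a retract of one of the terms of that chain, and then to push this retract structure down to the two component categories $\fA$ and $\fB$ via the projection functors $U$ and $V$, where smallness is preserved by retracts.

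So let $(A,f,B)$ be a $\lambda$-small object of $(F\downarrow G)$. By hypothesis it is the colimit of a $\lambda$-chain $H:\lambda\to(F\downarrow G)$ of inherited $\lambda$-small objects; write $Hj=(A_j,f_j,B_j)$, so that each $A_j$ is $\lambda$-small in $\fA$ and each $B_j$ is $\lambda$-small in $\fB$, and let $\big((A,f,B),(\sigma_i)_{i<\lambda}\big)$ be the corresponding limiting cocone, with $\sigma_i=(a_i,b_i)$. First I would apply the $\lambda$-smallness of $(A,f,B)$ to the identity morphism $1_{(A,f,B)}:(A,f,B)\to(A,f,B)$, viewed as a morphism into the colimit of $H$. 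This yields an index $j<\lambda$ and a morphism $\gamma=(g_1,g_2):(A,f,B)\to Hj$ such that $\sigma_j\circ\gamma=1_{(A,f,B)}$.

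Since composition and identities in a comma category are computed coordinatewise, this single equation splits into $a_j\circ g_1=1_A$ in $\fA$ and $b_j\circ g_2=1_B$ in $\fB$. In other words, $A$ is a retract of the $\lambda$-small object $A_j$ inside $\fA$, and $B$ is a retract of the $\lambda$-small object $B_j$ inside $\fB$. It then follows that $A$ is $\lambda$-small in $\fA$ and $B$ is $\lambda$-small in $\fB$, which is precisely the assertion that the $\lambda$-small object $(A,f,B)$ is inherited. This finishes the argument.

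The only step requiring care — and the one I would flag as the main (minor) obstacle — is the last transfer of smallness to the components. Lemma~\ref{retractsmall} is stated for a split epi/split mono pair inside a \emph{semi-$\lambda$-algebroidal} category, whereas here I obtain an arbitrary retract inside $\fA$ and $\fB$, which need not be assumed semi-$\lambda$-algebroidal. I would resolve this by observing that the three-line proof of Lemma~\ref{retractsmall} never uses the epi/mono status of the two maps: given a $\lambda$-chain in $\fA$ with limiting cocone $(S,(s_i)_{i<\lambda})$ and a morphism $\phi\colon A\to S$, one factors $\phi\circ a_j\colon A_j\to S$ through some $s_i\circ g$ using the $\lambda$-smallness of $A_j$, and then precomposes with $g_1$ to factor $\phi=\phi\circ a_j\circ g_1$; the splitting identity $a_j\circ g_1=1_A$ is all that is needed. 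Thus the retract argument applies verbatim in $\fA$ and in $\fB$. (Alternatively, whenever the standing hypotheses force the relevant component morphisms to be monomorphisms, the splitting makes $a_j$ and $b_j$ isomorphisms, giving $A\cong A_j$ and $B\cong B_j$ and hence $\lambda$-smallness at once.)
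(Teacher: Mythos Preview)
Your proof is correct and follows essentially the same route as the paper's own proof: factor the identity through a term of the chain using $\lambda$-smallness, obtain a retract in each coordinate, and invoke Lemma~\ref{retractsmall}. Your extra paragraph flagging that Lemma~\ref{retractsmall} is stated for semi-$\lambda$-algebroidal categories (which the standing hypotheses of Lemma~\ref{smallthings} do not assume for $\fA$ and $\fB$) is a legitimate point of care that the paper glosses over; your observation that its proof uses only the splitting equation and not the ambient hypothesis is correct, and in any case the only place Lemma~\ref{inherited} is applied (Proposition~\ref{semialgebroidal}) does assume $\fA$ and $\fB$ semi-$\lambda$-algebroidal.
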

\begin{proof}
	Let $(A,f,B)$ be $\lambda$-small in $(F \downarrow G)$, let $H:\lambda\to (F \downarrow G)$ such that $Hi=(A_i,f_i,B_i)$ is inherited $\lambda$-small for all $i<\lambda$, and such that $((A,f,B),(p_i,q_i)_{i<\lambda})$ is a limiting cocone of $H$. Consider the identity morphism $(1_A,1_B)$ of $(A,f,B)$. Since $(A,f,B)$ is $\lambda$-small, there is some $i<\lambda$ and some $(a,b):(A,f,B)\to(A_i,f_i,B_i)$ such that $(1_A,1_B)=(p_i,q_i)\circ(a,b)$. In other words, $(A,f,B)$ is a retract of $(A_i,f_i,B_i)$. It follows that $A$ is a retract of $A_i$ and $B$ is a retract of $B_i$. Hence, by Lemma~\ref{retractsmall} we have that $A$ is $\lambda$-small in $\fA$, and $B$ is $\lambda$-small in $\fB$. It follows that $(A,f,B)$ is inherited.
\end{proof}

\begin{definition}
	Let $\fA,\fB,\fC$ be categories. Let $F:\fA\to\fC$, $G:\fB\to\fC$ be functors. We say that \emph{$F$ preserves $\lambda$-smallness}  with respect to $G$ if for all $H:\lambda\to\fB$ with limiting cocone $(B,(g_i)_{i<\lambda})$, for all $A\in\fA_{<\lambda}$, and for all $f:FA\to GB$ there exists some $j<\lambda$ and some $h:FA\to GHj$ such that $Gg_j\circ h = f$.
\end{definition}

Recall that for ordinals $\mu\le\lambda$, we call a $\mu$-chain $J:\mu\to\lambda$ cofinal if
\[ \bigcup_{i<\mu} Ji = \lambda.\]
 
\begin{lemma}\label{subsequence}
	Let $\lambda$ be a regular cardinal, $\fA$ be a category, and let $H$ be a $\lambda$-chain in $\fA$ with limiting cocone $(S,(s_i)_{i<\lambda})$. Let $J:\lambda\to\lambda$ be cofinal. Then $(S,(s_{Ji})_{i<\lambda})$ is a limiting cocone for $HJ$.  
\end{lemma}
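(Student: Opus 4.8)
The plan is to verify compatibility directly and then establish the universal property by transporting cocones back and forth along $J$, the only nontrivial ingredient being the cofinality of $J$.

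First I would check that $(S,(s_{Ji})_{i<\lambda})$ is a compatible cocone for $HJ$. For $i\le i'$ we have $Ji\le Ji'$ since $J$ is monotone, and the composite functor acts on morphisms by $(HJ)(i,i')=H(Ji,Ji')$. Compatibility of the original cocone $(S,(s_j)_{j<\lambda})$ then gives $s_{Ji'}\circ H(Ji,Ji')=s_{Ji}$, which is exactly the compatibility condition required for $HJ$.

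For the universal property, let $(T,(g_i)_{i<\lambda})$ be an arbitrary compatible cocone for $HJ$. The key idea is to manufacture from it a compatible cocone $(T,(h_k)_{k<\lambda})$ for the \emph{full} chain $H$. Since $J$ is cofinal, for every $k<\lambda$ there is some $i<\lambda$ with $k\le Ji$, and I set $h_k:=g_i\circ H(k,Ji)$. The step I expect to require the most care is showing that $h_k$ is independent of the choice of $i$: if $k\le Ji$ and $k\le Ji'$ with $i\le i'$, then the factorisation $H(k,Ji')=H(Ji,Ji')\circ H(k,Ji)$ together with the compatibility $g_{i'}\circ H(Ji,Ji')=g_i$ of the given cocone yields $g_{i'}\circ H(k,Ji')=g_i\circ H(k,Ji)$, so the two definitions agree. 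An analogous factorisation shows that $(T,(h_k)_{k<\lambda})$ is a compatible cocone for $H$, and by construction $h_{Ji}=g_i\circ H(Ji,Ji)=g_i$ (choosing the witness $i$ itself).

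Since $(S,(s_k)_{k<\lambda})$ is a limiting cocone for $H$, there is a unique mediating morphism $m:S\to T$ with $m\circ s_k=h_k$ for all $k<\lambda$; specialising to $k=Ji$ gives $m\circ s_{Ji}=g_i$, so $m$ mediates between $(S,(s_{Ji})_{i<\lambda})$ and $(T,(g_i)_{i<\lambda})$. For uniqueness I would show that any morphism $m'$ mediating between these two $HJ$-cocones is automatically a mediating morphism for the $H$-cocones: given $k<\lambda$, pick $i$ with $k\le Ji$ and compute $m'\circ s_k=m'\circ s_{Ji}\circ H(k,Ji)=g_i\circ H(k,Ji)=h_k$, using the compatibility $s_{Ji}\circ H(k,Ji)=s_k$ of $(S,(s_k))$. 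Then uniqueness of the mediating morphism for the limiting cocone of $H$ forces $m'=m$, completing the proof that $(S,(s_{Ji})_{i<\lambda})$ is limiting for $HJ$.
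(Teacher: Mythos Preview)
Your proof is correct and follows essentially the same strategy as the paper's: extend a compatible cocone for $HJ$ to one for $H$ by precomposing with the connecting morphisms of $H$, then invoke the universal property of the original limiting cocone for both existence and uniqueness of the mediating morphism. The only cosmetic difference is that the paper makes a canonical choice of witness (taking $\hat{k}$ to be the least element of the image of $J$ that is $\geq k$) rather than picking an arbitrary $i$ with $Ji\geq k$ and verifying independence of the choice as you do.
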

\begin{proof}
	Let $(T, (t_{Ji})_{i<\lambda})$ be a compatible cocone for $HJ$. We will complete this cocone to a compatible cocone for $H$. For an ordinal $k<\lambda$, by $\hat{k}$ we define the smallest ordinal greater or equal than $k$ that is in the image of $J$. This is well-defined, since $J$ is cofinal.  Then we put $t_k:= H(k,\hat{k})\circ t_{\hat{k}}$. To see that with $(T,(t_i)_{i<\lambda})$ we obtain a compatible cocone for $H$, let $k<l<\lambda$. 
	Then from one hand we have
	\[ t_l\circ H(k,l) = t_{\hat{l}}\circ H(l,\hat{l})\circ H(k,l) = t_{\hat{l}}\circ H(k,\hat{l}).\]
	On the other hand we may compute:
	\[ t_k= t_{\hat{k}}\circ H(k,\hat{k}) = t_{\hat{l}}\circ H(\hat{k},\hat{l})\circ H(k,\hat{k}) = t_{\hat{l}}\circ H(k,\hat{l})\]
	by the previous case. This proves that $(T,(t_i)_{i<\lambda})$ is indeed compatible for $H$. 
	
	Hence, there exists a morphism $h:S\to T$ that mediates between $(S,(s_i)_{i<\lambda})$ and $(T,(t_i)_{i<\lambda})$. I.e., for all $i<\lambda$ we have $h\circ s_i=t_i$. In particular, we have that $h$ mediates between $(S,(s_{Ji})_{i<\lambda})$ and $(T, (t_{Ji})_{i<\lambda})$. 
	
	Let $h':S\to T$ be another mediating morphism between  $(S,(s_{Ji})_{i<\lambda})$ and $(T, (t_{Ji})_{i<\lambda})$. Let $k<\lambda$. Then we compute
	\[ t_k = t_{\hat{k}}\circ H(k,\hat{k}) = h'\circ s_{\hat{k}}\circ H(k,\hat{k}) = h'\circ s_k.\]
	Hence $h'$ mediates also between $(S,(s_i)_{i<\lambda})$ and $(T,(t_i)_{i<\lambda})$. Thus we have $h=h'$ and the proof is finished. 
\end{proof}

\begin{proposition}\label{semialgebroidal}
	Let $\fA,\fB,\fC$ be categories such that $\fA$ and $\fB$ are semi-$\lambda$-algebroidal, and such that all morphisms of $\fA$ and $\fB$ are monomorphisms. Let $F:\fA\to\fC$, $G:\fB\to\fC$ be functors such that 
	\begin{enumerate}[(1)]
		\item $F$ is $\lambda$-cocontinuous and $\mu_{<\lambda}$-cocontinuous for all $\mu<\lambda$,
		\item $F$ preserves $\lambda$-smallness with respect to $G$, and 
		\item $G$ is $\lambda_{<\lambda}$-cocontinuous, 
		\item $G$ preserves monos.
	\end{enumerate}
	Then $(F \downarrow G)$ is semi-$\lambda$-algebroidal.
\end{proposition}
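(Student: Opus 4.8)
The plan is to verify the two defining conditions of semi-$\lambda$-algebroidality for $(F \downarrow G)$ directly: call them \textbf{(a)} every object is the colimit of a $\lambda$-chain of $\lambda$-small objects, and \textbf{(b)} every $\mu$-chain of $\lambda$-small objects (for $\mu \le \lambda$) has a colimit in $(F \downarrow G)$. I would establish \textbf{(a)} first, in the stronger form that every object is a colimit of a $\lambda$-chain of \emph{inherited} $\lambda$-small objects, because Lemma~\ref{inherited} then upgrades this to the statement that \emph{all} $\lambda$-small objects of $(F \downarrow G)$ are inherited --- a fact I need to run the argument for \textbf{(b)}.

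For \textbf{(a)}, fix an object $(A,f,B)$. Since $\fA$ and $\fB$ are semi-$\lambda$-algebroidal, write $A$ as the colimit of a $\lambda$-chain $P$ of $\lambda$-small objects with limiting cocone $(A,(p_i)_{i<\lambda})$, and $B$ as the colimit of a $\lambda$-chain $Q$ with limiting cocone $(B,(q_i)_{i<\lambda})$. The crux is to manufacture compatible connecting morphisms $f_i:F(Pm_i)\to G(Qn_i)$ along reindexed cofinal subsequences $m,n:\lambda\to\lambda$, so that $\big((A,f,B),(p_{m_i},q_{n_i})_{i<\lambda}\big)$ becomes a cocone over the resulting chain $R$ with $Ri=(Pm_i,f_i,Qn_i)$. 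Here the hypothesis that $F$ preserves $\lambda$-smallness with respect to $G$ is exactly what provides, for each chosen $A$-index, a valid $B$-target $n$ together with a factorization of $f\circ Fp_{m_i}$ through $Gq_n$.

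The observation that makes this family compatible is that $Gq_n$ is a monomorphism (all morphisms of $\fB$ are monos and $G$ preserves monos): this forces the factorization through a fixed target to be \emph{unique}, and factorizations through larger targets arise simply by post-composing with $G$ of a transition map of $Q$. Consequently, once each $n_i$ is a valid target for $m_i$, the desired compatibility $f_{i'}\circ FP(m_i,m_{i'}) = GQ(n_i,n_{i'})\circ f_i$ holds because both sides agree after composing with the monomorphism $Gq_{n_{i'}}$, hence are equal. I build $m,n$ by transfinite recursion --- bumping the $A$-index at successors, using $F$-preservation of $\lambda$-smallness to pick a valid $B$-target at each stage, and taking suprema at limits (which stay below $\lambda$ by regularity). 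Since strictly increasing $\lambda$-sequences are cofinal in a regular $\lambda$, Lemma~\ref{subsequence} shows $(A,(p_{m_i}))$ and $(B,(q_{n_i}))$ are limiting cocones of $UR=Pm$ and $VR=Qn$; $\lambda$-cocontinuity of $F$ makes $(FA,(Fp_{m_i}))$ limiting for $FUR$, so Lemma~\ref{colimits} produces a unique $h$ with $((A,h,B),(p_{m_i},q_{n_i}))$ a limiting cocone of $R$, and uniqueness forces $h=f$. Lemma~\ref{smallthings} shows each $Ri$ is inherited $\lambda$-small, finishing \textbf{(a)}.

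With Lemma~\ref{inherited} every $\lambda$-small object of $(F \downarrow G)$ is now inherited, so for \textbf{(b)} a $\mu$-chain $H$ of $\lambda$-small objects has $UH,VH$ being $\mu$-chains of $\lambda$-small objects in $\fA,\fB$. Semi-$\lambda$-algebroidality gives them colimits $(L,(p_D))$, $(M,(q_D))$; then $\mu_{<\lambda}$-cocontinuity of $F$ (for $\mu<\lambda$), respectively $\lambda$-cocontinuity (for $\mu=\lambda$), makes $(FL,(Fp_D))$ limiting for $FUH$, and Lemma~\ref{colimits} yields a colimit $((L,h,M),(p_D,q_D))$ of $H$. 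Together \textbf{(a)} and \textbf{(b)} give semi-$\lambda$-algebroidality of $(F \downarrow G)$. I expect the main obstacle to be the simultaneous index-alignment recursion in \textbf{(a)}: arranging $m$ and $n$ to be strictly increasing and cofinal while keeping each $n_i$ a valid target for $m_i$, and confirming that the monomorphy of $Gq_{n_{i'}}$ really does force compatibility of the $f_i$ for free --- everything else is bookkeeping around the cited lemmas.
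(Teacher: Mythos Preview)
Your proposal is correct and follows essentially the same route as the paper's proof: establish \textbf{(a)} first in the stronger ``inherited'' form (using $F$-preservation of $\lambda$-smallness to get the connecting maps, $G$-preservation of monos to force their compatibility, and Lemma~\ref{colimits} to conclude), invoke Lemma~\ref{inherited}, then feed the result into Lemma~\ref{colimits} for \textbf{(b)}. One minor simplification in the paper: it reindexes only the $B$-chain (choosing an increasing cofinal $j:\lambda\to\lambda$ with $h_i:FHi\to GKj(i)$) and keeps the $A$-chain $H$ untouched, avoiding your two-sided transfinite recursion; your version works but does more bookkeeping than necessary. You are also slightly more careful than the paper in explicitly treating the $\mu=\lambda$ case of \textbf{(b)} via $\lambda$-cocontinuity of $F$.
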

\begin{proof}
	Let us first show, that every object of $(F \downarrow G)$ is the colimit of a $\lambda$-chain of inherited $\lambda$-small objects: Let $(A,f,B)\in (F \downarrow G)$. Since $\fA$ is semi-$\lambda$-algebroidal, there is a $\lambda$-chain $H$ of $\lambda$-small objects in $\fA$ and morphisms $a_i\in\fA(Hi\to A)$ (for all $i<\lambda$) such that $(A,(a_i)_{i<\lambda})$ is a limiting cocone of $H$. Similarly, since $\fB$ is semi-$\lambda$-algebroidal, there is a $\lambda$-chain $K$  of $\lambda$-small objects in $\fB$ and a family of morphisms $b_i\in\fB(Ki\to B)$ ($i<\lambda$), such that $(B,(b_i)_{i<\lambda})$ is a limiting cocone of $K$. Since $G$ is $\lambda_{<\lambda}$-cocontinuous, we have that $(GB, (Gb_i)_{i<\lambda})$ is a limiting cocone of $GK$. Since $F$ preserves $\lambda$-smallness with respect to $G$, there exists a $j=j(i)$ and $h_i: FHi\to GKj(i)$ such that the following diagram commutes:
	\[
	\begin{psmatrix}
		[name=FA]FA & [name=GB]GB\\
		[name=FHi]FHi & [name=GKji]GKj(i)
		\ncline{->}{FA}{GB}^f%
		\ncline{->}{FHi}{GKji}_{h_i}%
		\ncline{->}{FHi}{FA}<{Fa_i}
		\ncline{>->}{GKji}{GB}>{Gb_{j(i)}}%
	\end{psmatrix}
	\]
	Whenever a factoring morphism of $f\circ Fa_i$ exists through $GKj$, then it exists also through $GKj'$ for all $j'>j$. Hence the function $J:\lambda\to\lambda: i\mapsto j(i)$ can be chosen to be increasing in a way that the sequence $(j(i))_{i<\lambda}$ is cofinal in $\lambda$. By taking $K':= KJ$, we have that $\chi:=(h_i)_{i<\lambda}$ is a natural transformation from $FH$ to $GK'$. Moreover, since $J$ is cofinal, by Lemma~\ref{subsequence} we have that $(B, (b_{j(i)})_{i<\lambda})$ is a limiting cocone of $K'$. By the universal property  of the comma-categories (cf. Proposition~\ref{univcomma}), there exists a unique functor $W:\lambda\to (F \downarrow G)$ such that $UW=H$, $VW=K'$, $\alpha*W=\chi$. It follows that $(A,f,B)$ is a colimit of $W$ and it follows from Lemma~\ref{smallthings} that $Wi$ is $\lambda$-small for all $i<\lambda$.

	It remains to show that $(F \downarrow G)$ has colimits of all $\mu$-chains of $\lambda$-small objects for all $\mu<\lambda$.
	
	As we showed above, every object in $(F\downarrow G)$ is the colimit of a $\lambda$-chain of inherited $\lambda$-small objects of  $(F\downarrow G)$. Hence, by Lemma~\ref{inherited} it follows that every object in $(F\downarrow G)_{<\lambda}$ is inherited. 
	
	Let now  $H:\mu\to (F\downarrow G)$ be a $\mu$-chain such that for all $i<\mu$ we have $Hi\in(F\downarrow G)_{<\lambda}$.  Then $UH$ is a $\mu$-chain of $\lambda$-small objects in $\fA$, and $VH$ is a chain of $\lambda$-small objects in $\fB$. Since $\fA$ and $\fB$ are both semi-$\lambda$-algebroidal,  it follows that $UH$ has a limiting cocone $(L,(p_i)_{i<\mu})$ and that $VH$ has a limiting cocone $(M,(q_i)_{i<\mu})$. Since $F$ is $\mu_{<\lambda}$-cocontinuous, we have that $(FL,(Fp_i)_{i<\mu})$ is a limiting cocone for $FUH$. Now, from Lemma~\ref{colimits} it follows that there exists a unique morphism $h:FL\to GM$ such that $((L,h,M),(p_i,q_i)_{i<\mu})$ is a limiting cocone for $H$. In particular, $(L,h,M)$ is a colimit of $H$. 
\end{proof}
In the proof of Proposition~\ref{semialgebroidal} we showed that  under the assumptions of Proposition~\ref{semialgebroidal}, all $\lambda$-small objects of $(F \downarrow G)$ are inherited. This enables us, to formulate the following result:
\begin{proposition}\label{algebroidal}
	Let $\fA,\fB,\fC$ be categories such that $\fA$ and $\fB$ are $\lambda$-algebroidal, and such that all morphisms of $\fA$ and of $\fB$ are monomorphisms. Let $F:\fA\to\fC$, $G:\fB\to\fC$ be functors such that 
	\begin{enumerate}[(1)]
		\item $F$ is $\lambda$-cocontinuous and $\mu_{<\lambda}$-cocontinuous for all $\mu<\lambda$,
		\item $F$ preserves $\lambda$-smallness with respect to $G$, and 
		\item $G$ is $\lambda_{<\lambda}$-cocontinuous, 
		\item $G$ preserves monos.
	\end{enumerate}
	Additionally, suppose that for all $\lambda$-small objects $A\in\fA_{<\lambda}$, $B\in\fB_{<\lambda}$ there are at most $\lambda$ morphisms between $FA$ and $GB$. Then $(F \downarrow G)$ is $\lambda$-algebroidal.
\end{proposition}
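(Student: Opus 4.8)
The plan is to verify the three defining properties of a $\lambda$-algebroidal category for $(F \downarrow G)$: that it is semi-$\lambda$-algebroidal, that $(F \downarrow G)_{<\lambda}$ has at most $\lambda$ isomorphism classes, and that between any two $\lambda$-small objects there are at most $\lambda$ morphisms. The first is immediate: since $\fA$ and $\fB$ are $\lambda$-algebroidal they are in particular semi-$\lambda$-algebroidal, and the functor hypotheses (1)--(4) are exactly those of Proposition~\ref{semialgebroidal}, so that proposition already yields that $(F \downarrow G)$ is semi-$\lambda$-algebroidal. The crucial additional fact I would extract from (the proof of) Proposition~\ref{semialgebroidal}, recorded in the remark preceding the present statement, is that under these hypotheses \emph{every} $\lambda$-small object of $(F \downarrow G)$ is inherited, i.e.\ isomorphic to some $(A,f,B)$ with $A \in \fA_{<\lambda}$ and $B \in \fB_{<\lambda}$. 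This is what reduces the remaining two counting tasks to counting in $\fA_{<\lambda}$, $\fB_{<\lambda}$, and the hom-sets $\fC(FA \to GB)$.

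For the bound on isomorphism classes, I would fix sets of representatives $R_\fA$ of the isomorphism classes of $\fA_{<\lambda}$ and $R_\fB$ of those of $\fB_{<\lambda}$; since $\fA$ and $\fB$ are $\lambda$-algebroidal, $|R_\fA|, |R_\fB| \le \lambda$. Given any $\lambda$-small $(A',f',B')$, it is inherited, so there are isomorphisms $a \colon A \to A'$ and $b \colon B \to B'$ with $A \in R_\fA$, $B \in R_\fB$; setting $f := (Gb)^{-1} \circ f' \circ Fa$ makes $(a,b)$ an isomorphism $(A,f,B) \to (A',f',B')$ in $(F \downarrow G)$. Hence every isomorphism class has a representative among the triples $(A,f,B)$ with $A \in R_\fA$, $B \in R_\fB$, $f \in \fC(FA \to GB)$. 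By the extra hypothesis each such hom-set has cardinality at most $\lambda$, so the number of such triples is at most $\lambda \cdot \lambda \cdot \lambda = \lambda$, giving at most $\lambda$ isomorphism classes.

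For the bound on morphisms, let $(A,f,B)$ and $(A',f',B')$ be $\lambda$-small, hence inherited, so $A,A' \in \fA_{<\lambda}$ and $B,B' \in \fB_{<\lambda}$. A morphism between them is in particular a pair $(a,b)$ with $a \in \fA(A \to A')$ and $b \in \fB(B \to B')$, and $\lambda$-algebroidality of $\fA$ and $\fB$ bounds each of these hom-sets by $\lambda$; therefore there are at most $\lambda \cdot \lambda = \lambda$ such pairs, and a fortiori at most $\lambda$ morphisms in $(F \downarrow G)$. I expect no genuine obstacle here: the only points requiring care are to phrase the isomorphism-class count at the level of representatives (so as to legitimately invoke inherited-ness together with the hom-set bounds), and to use that $\lambda$, being an infinite cardinal, makes all the arising cardinal products collapse to $\lambda$.
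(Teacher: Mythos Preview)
Your proposal is correct and follows essentially the same approach as the paper: invoke Proposition~\ref{semialgebroidal} for semi-$\lambda$-algebroidality, use that all $\lambda$-small objects are inherited, and then bound the number of isomorphism classes by $\lambda^3=\lambda$ and the hom-sets by $\lambda^2=\lambda$. Your version is simply more explicit about choosing representatives and constructing the comparison isomorphism $(a,b)$, which the paper leaves implicit.
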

\begin{proof}
	By Proposition~\ref{semialgebroidal} we have that $(F \downarrow G)$ is semi-$\lambda$-algebroidal. We already noted, that all $\lambda$-small objects of $(F \downarrow G)$ are inherited. By this reason, the number of $\lambda$-small objects in $(F \downarrow G)$ is at most  $\lambda^3=\lambda$. Also, the number of morphisms between $\lambda$-small objects of $(F \downarrow G)$ is at most $\lambda^2=\lambda$. Hence, $(F \downarrow G)$ is $\lambda$-algebroidal. 
\end{proof}

\subsection{Universal homogeneous objects in categories}
In \cite{DroGoe92,DroGoe93}, Droste and G\"obel developed a categorial version of  classical model theoretic theorems by \Fraisse{} and J\'onsson that characterize universal homogeneous structures. This generalization is staged in $\lambda$-algebroidal categories, and we need to introduce a few more notions in order to be able to state it.

In the following, let $\fC$ be a category in which all morphisms are monomorphisms. Let $\fC^*$ be a full subcategory of $\fC$. 

Let $U\in\fC$. Then we say that 
\begin{description}
	\item[$U$ is $\fC^*$-universal] if for every $A\in\fC^*$ there is a morphism $f:A\to U$,
	\item[$U$ is $\fC^*$-homogeneous] if for every $A\in\fC^*$ and for all morphisms $f,g:A\to U$ there exists an automorphism $h$ of $U$ such that $h\circ f=g$,
	\item[$U$ is $\fC^*$-saturated] if for every $A,B\in\fC^*$ and for all $f:A\to U$, $g:A\to B$ there exists some $h:B\to U$ such that $h\circ g=f$.
\end{description}

We say that
\begin{description}
	\item[$\fC^*$ has the joint embedding property] if for all $A,B\in\fC^*$ there exists a $C\in\fC^*$ and morphisms $f:A\to C$ and $g:B\to C$,
	\item[$\fC^*$ has the amalgamation property] if for all $A$, $B$, $C$ from $\fC^*$ and $f:A\to B$, $g:A\to C$, there exists $D\in\fC^*$ and $\hat{f}:C\to D$, $\hat{g}:B\to D$ such that the following diagram commutes:
	\[
		\begin{psmatrix}
		[name=A]A & [name=B]B\\
		[name=C]C & [name=D]D.
		\ncline{->}{A}{B}^f%
		\ncline{->}{A}{C}<g%
		\ncline[linestyle=dashed]{->}{C}{D}^{\hat{f}}%
		\ncline[linestyle=dashed]{->}{B}{D}<{\hat{g}}%
		\end{psmatrix}
	\]
\end{description}

With these preparations done we are ready to  state  the result by Droste and G\"obel:
\begin{theorem}[{\cite[Thm.1.1]{DroGoe92}}]\label{DroGoe}
	Let $\lambda$ be a regular cardinal, and let $\fC$ be a $\lambda$-algebroidal category in which all morphisms are monomorphisms. Then, up to isomorphism, $\fC$ contains at most one $\fC$-universal, $\fC_{<\lambda}$-homogeneous object. Moreover, $\fC$ contains a $\fC$-universal, $\fC_{<\lambda}$-homogeneous object if and only if $\fC_{<\lambda}$ has the joint embedding property and the amalgamation property. 
\end{theorem}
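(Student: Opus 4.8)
The plan is to route the proof through the auxiliary notion of \emph{$\fC_{<\lambda}$-saturation} (for all $A,B\in\fC_{<\lambda}$ and all $f:A\to U$, $g:A\to B$ there is $h:B\to U$ with $h\circ g=f$), following the classical \Fraisse{} pattern categorially. The four steps are: (i) for a $\fC$-universal object, saturation is equivalent to $\fC_{<\lambda}$-homogeneity; (ii) any two $\fC$-universal, $\fC_{<\lambda}$-saturated objects are isomorphic; (iii) the existence of such an object forces the joint embedding and amalgamation properties on $\fC_{<\lambda}$; (iv) conversely these two properties allow its construction. Two structural facts carry everything: every object of $\fC$ is a colimit of a $\lambda$-chain of $\lambda$-small objects, and $\lambda$-smallness lets any morphism from a $\lambda$-small object into such a colimit factor through a finite stage; since all morphisms are monos, the colimit injections may always be cancelled. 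The easy half of (i) is immediate: given $f:A\to U$ and $g:A\to B$ with $A,B\in\fC_{<\lambda}$, universality gives $e:B\to U$, and homogeneity applied to the pair $f, e\circ g:A\to U$ yields $\sigma\in\Aut(U)$ with $\sigma\circ e\circ g=f$, so $h:=\sigma\circ e$ lifts. For necessity (iii) I map $A,B\in\fC_{<\lambda}$ into $U$ by universality, factor these maps through a common stage $A_k$ of a presentation $U=\operatorname{colim}_iA_i$ (taking a maximum of two indices, legitimate as $\lambda$ is regular), and read the joint embedding off inside the $\lambda$-small $A_k$; for amalgamation, from $f:A\to B$, $g:A\to C$ I choose $e_B:B\to U$, apply saturation to obtain $h:C\to U$ with $h\circ g=e_B\circ f$, factor $e_B$ and $h$ through a common stage $A_k$, and cancel the mono $A_k\to U$ to get a genuine amalgam $\hat g:B\to A_k$, $\hat f:C\to A_k$ in $\fC_{<\lambda}$.

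The reverse direction of (i), saturation $\Rightarrow$ homogeneity, and the uniqueness step (ii) are two instances of a categorical back-and-forth. Presenting $U=\operatorname{colim}_{i<\lambda}A_i$ (and for (ii) also $U'=\operatorname{colim}_{j<\lambda}B_j$) as continuous $\lambda$-chains of $\lambda$-small objects, I build the required automorphism (resp.\ isomorphism) as a colimit of a coherent family of partial maps, alternately extending on the domain and on the range side. Each extension step is a single application of saturation, and the factorization-through-stages property forces both $U$ and $U'$ to be exhausted, so that the two mediating morphisms produced at the colimit are mutually inverse.

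The substance is the existence step (iv). Assuming JEP and AP on $\fC_{<\lambda}$, I would build by transfinite recursion of length $\lambda$ a continuous $\lambda$-chain $H:\lambda\to\fC_{<\lambda}$ with two scheduling streams: JEP-tasks ensuring that every isomorphism type of $\fC_{<\lambda}$ eventually embeds into some $Hj$, and extension-tasks $(i,\,g:Hi\to B)$ solved at a successor stage by applying AP to the span $B\xleftarrow{g}Hi\xrightarrow{H(i,i{+}1)}H(i{+}1)$ so as to yield $h:B\to H(i{+}1)$ with $h\circ g=H(i,i{+}1)$. At limit stages one takes colimits, which remain $\lambda$-small by Lemma~\ref{chainext}. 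The colimit $U$ of such a \Fraisse{} sequence is then $\fC_{<\lambda}$-saturated: a lifting problem $f:A\to U$, $g:A\to B$ is reduced, via a factorization $f=s_i\circ f'$, to an amalgamation of $B\xleftarrow{g}A\xrightarrow{f'}Hi$, after which the one-step extension property of the sequence finishes the job; universality follows from the JEP-stream together with saturation, and homogeneity from step (i).

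The main obstacle I anticipate is the bookkeeping in step (iv): the extension-tasks are generated dynamically as the chain grows, yet all of them must be discharged within $\lambda$ stages. Making this work rests on the $\lambda$-algebroidal cardinality bounds (at most $\lambda$ isomorphism classes of $\lambda$-small objects and at most $\lambda$ morphisms between any two), which cap the total number of tasks at $\lambda$, and on the regularity of $\lambda$, which permits a dovetailing schedule revisiting every stage cofinally often; Lemma~\ref{chainext} and Lemma~\ref{subsequence} keep the chain inside $\fC_{<\lambda}$ and allow free passage to cofinal subchains during the verification.
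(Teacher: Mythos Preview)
The paper does not prove this theorem: it is quoted from Droste and G\"obel \cite[Thm.~1.1]{DroGoe92} and used as a black box, so there is no ``paper's own proof'' to compare against. Your sketch is a faithful rendering of the standard categorical \Fraisse{} argument (and, to the extent one can infer from the citation, of what Droste and G\"obel themselves do). Note that your step~(i), the equivalence of $\fC_{<\lambda}$-saturation with $\fC_{<\lambda}$-homogeneity for universal objects, is exactly the content of Proposition~\ref{univhomsat}, which the paper also imports from \cite{DroGoe92} without proof.

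Your outline is correct. Two small remarks on the write-up of step~(iv): first, when you say the extension-task $(i,\,g:Hi\to B)$ is ``solved at a successor stage by applying AP to the span $B\xleftarrow{g}Hi\xrightarrow{H(i,i{+}1)}H(i{+}1)$'', the stage at which it is solved is not $i{+}1$ but whatever later successor $j{+}1$ the scheduler assigns to it, so the span is really $B\xleftarrow{g}Hi\xrightarrow{H(i,j)}Hj$; second, to make the bookkeeping honest you should fix once and for all a set of representatives of the $\le\lambda$ isomorphism classes in $\fC_{<\lambda}$ (the third clause of the definition of $\lambda$-algebroidal) so that the family of tasks literally has size $\lambda$ and can be enumerated. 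Neither point is a gap, just a matter of making the recursion explicit. Your invocation of Lemma~\ref{chainext} at limit stages and of the cardinality bounds for the scheduling is exactly right.
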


Another result from \cite{DroGoe92} that we shall need is
\begin{proposition}[{\cite[Prop.2.2(a)]{DroGoe92}}]\label{univhomsat}
	Let $\lambda$ be a cardinal and let $\fC$ be a semi-$\lambda$-algebroidal category in which all morphisms are monic. Then for any object $U$ of $\fC$ the following are equivalent:
	\begin{enumerate}[(1)]
		\item $U$ is $\fC$-universal and $\fC_{<\lambda}$-homogeneous,
		\item $U$ is $\fC_{<\lambda}$-universal and $\fC_{<\lambda}$-homogeneous,
		\item $U$ is $\fC_{<\lambda}$-universal and $\fC_{<\lambda}$-saturated.
	\end{enumerate}
\end{proposition}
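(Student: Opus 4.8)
The plan is to establish the cycle of implications $(1)\Rightarrow(2)\Rightarrow(3)\Rightarrow(1)$, with essentially all of the work concentrated in the last one, where the semi-$\lambda$-algebroidal structure is actually used. For $(1)\Rightarrow(2)$ there is nothing to do: since $\fC_{<\lambda}$ is a full subcategory of $\fC$, a $\fC$-universal object is \emph{a fortiori} $\fC_{<\lambda}$-universal, and the homogeneity clause is verbatim the same in both statements. For $(2)\Rightarrow(3)$ I would argue directly: given $A,B\in\fC_{<\lambda}$, $f\colon A\to U$ and $g\colon A\to B$, use $\fC_{<\lambda}$-universality to pick some $f'\colon B\to U$; then $f$ and $f'\circ g$ are two morphisms $A\to U$ out of an object of $\fC_{<\lambda}$, so $\fC_{<\lambda}$-homogeneity yields an automorphism $k$ of $U$ with $k\circ(f'\circ g)=f$, and $h:=k\circ f'$ satisfies $h\circ g=f$. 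This is precisely $\fC_{<\lambda}$-saturation.

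The substance is $(3)\Rightarrow(1)$, which I would split into $\fC$-universality and $\fC_{<\lambda}$-homogeneity. For \emph{universality}, take an arbitrary $X\in\fC$ and, using semi-$\lambda$-algebroidality, write it as the colimit of a $\lambda$-chain $H$ of $\lambda$-small objects with limiting cocone $(X,(x_i)_{i<\lambda})$. I then build a compatible cocone $(U,(u_i)_{i<\lambda})$ for $H$ by transfinite recursion: $u_0\colon H0\to U$ comes from $\fC_{<\lambda}$-universality; at a successor I apply saturation to $u_i\colon Hi\to U$ and $H(i,i+1)\colon Hi\to H(i+1)$ to obtain $u_{i+1}$ with $u_{i+1}\circ H(i,i+1)=u_i$; at a limit $\ell$ I form the colimit $Y$ of $H\restr\ell$, which is $\lambda$-small by Lemma~\ref{chainext}, note that the canonical cocone for $H\restr\ell$ induces $c\colon Y\to H\ell$ while the partial cocone built so far induces $v\colon Y\to U$, and apply saturation to $c,v$ to get $u_\ell$ with $u_\ell\circ c=v$ (compatibility $u_\ell\circ H(i,\ell)=u_i$ then follows since $c$ identifies the colimit cocone of $H\restr\ell$). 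The mediating morphism $X\to U$ for the completed cocone witnesses $\fC$-universality.

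For \emph{homogeneity}, fix $A\in\fC_{<\lambda}$, $f,g\colon A\to U$, and a presentation of $U$ as the colimit of a $\lambda$-chain $W$ of $\lambda$-small objects with limiting cocone $(U,(w_k)_{k<\lambda})$, and run a back-and-forth. The goal is a single $\lambda$-chain $E$ of $\lambda$-small objects together with two compatible cocones $(U,(\alpha_i)_{i<\lambda})$ and $(U,(\beta_i)_{i<\lambda})$, initialized by $E_0=A$, $\alpha_0=f$, $\beta_0=g$. At a forth step I absorb the least not-yet-covered $w_k$ into the $\alpha$-side: as $E_i$ is $\lambda$-small, $\alpha_i$ factors as $w_m\circ\hat\alpha_i$ for some $m\ge k$, so $Wm$ itself becomes $E_{i+1}$ (with chain map $\hat\alpha_i$), $\alpha_{i+1}:=w_m$, and saturation extends $\beta_i$ along $\hat\alpha_i$ to $\beta_{i+1}$; back steps are symmetric with $\alpha$ and $\beta$ exchanged, and limit stages are handled by Lemma~\ref{chainext} exactly as above. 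Interleaving forth and back over all $\lambda$ stages (using regularity of $\lambda$) secures that every $w_k$ is eventually covered on both sides. Granting that this coverage forces both $(U,(\alpha_i))$ and $(U,(\beta_i))$ to be \emph{limiting} cocones for $E$, they are two colimit cocones for the same diagram, so there is a unique isomorphism $h\colon U\to U$ with $h\circ\alpha_i=\beta_i$; in particular $h\circ f=h\circ\alpha_0=\beta_0=g$, as required.

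The hard part, and the step I would write out most carefully, is precisely the bracketed claim in the homogeneity argument: that a compatible cocone $(U,(\alpha_i))$ of a $\lambda$-chain of $\lambda$-small objects through which every $w_k$ factors (each $\alpha_i$ automatically factoring through some $w_k$ by $\lambda$-smallness) must in fact exhibit $U$ as $\operatorname{colim} E$. I expect to prove this by transporting an arbitrary competing cocone $(T,(\tau_i))$ along the $w_k$ to a compatible cocone on $W$, using the limiting property of $(U,(w_k))$ to produce the mediating map, and invoking that all morphisms of $\fC$ are monomorphisms to check the requisite identities and uniqueness. Turning ``every generator is eventually hit'' into ``this cocone is a colimit'' is the genuine content here; the successor and limit bookkeeping, and the simultaneous coverage on both sides, are routine once Lemma~\ref{chainext} and this coverage-implies-limiting principle are in place.
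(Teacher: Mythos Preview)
The paper does not supply a proof of this proposition; it is quoted from \cite{DroGoe92} and used as a black box. Your argument is correct and is the standard one: $(1)\Rightarrow(2)$ is trivial, $(2)\Rightarrow(3)$ is the one-line reduction via homogeneity that you give, and $(3)\Rightarrow(1)$ is the expected transfinite construction for universality together with a back-and-forth for homogeneity, with Lemma~\ref{chainext} keeping limit-stage colimits $\lambda$-small.

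Two small remarks. First, the limit step in the back-and-forth is slightly simpler than you indicate: take $E_\ell$ to be the colimit of $E\restr\ell$ and let $\alpha_\ell,\beta_\ell$ be the mediating morphisms of the partial cocones already built---no further appeal to saturation is needed there. Second, your ``covering implies limiting'' principle is correct and is proved exactly along the lines you sketch; monicity of all morphisms is precisely what lets you verify that the transported cocone on $W$ is compatible and that the resulting mediating map $m$ satisfies $m\circ\alpha_i=\tau_i$ for every $i$. Note also that your argument, through Lemma~\ref{chainext}, tacitly uses regularity of $\lambda$; this is the standing hypothesis of the section despite the proposition's wording.
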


\begin{remark}
	Let $\cC$ be an age. We already noted in Example~\ref{ex1} that then $\fC:=(\overline{\cC},\injto)$ is $\aleph_0$-algebroidal, and that $\fC_{<\aleph_0}$ is the subcategory of $\fC$ induced by the elements of $\cC$. Let $\bU$ be an object of $\fC$. Then $\bU$ is $\fC$-universal if and only if it is universal for $\overline{\cC}$, it is $\fC_{<\aleph_0}$-homogeneous if and only if it is homogeneous, and it is $\fC_{<\aleph_0}$-saturated if and only if it is weakly homogeneous --- i.e. whenever $\bA$ and $\bB$ are finitely generated substructures of $\bU$ with $\bA\le\bB$, and $f:\bA\injto\bU$ is an embedding, then there exists an embedding $g:\bB\injto\bU$ that extends $f$.  
\end{remark}

\subsection{A \Fraisse-type theorem for comma-categories}\label{xs6}
Before we can come to the formulation of a sufficient condition that the comma-category of two functors has a universal homogeneous object, we need to introduce some more  notions.

\begin{definition}
	Let $\fA,\fB,\fC$ be categories, let $F:\fA\to\fC$, $G:\fB\to\fC$. 
	We say that $\fA$ has the $(F,G)$-joint embedding property if for all $B_1,B_2\in\fA$, $T\in\fB$, $h_1:FB_1\to GT$, $h_2:FB_2\to GT$ there exists a $C\in\fA$, $T'\in\fB$, $f_1:B_1\to C$, $f_2:B_2\to C$, $h:FC\to GT'$, $k_1,k_2:T\to T'$ such that the following diagram commutes:
	\[
	\begin{psmatrix}
[name=FB1] FB_1 & [name=FC] FC & [name=FB2] FB_2\\
[name=GT1] GT & [name=GTp]GT' & [name=GT2] GT%
		\ncline[linestyle=dashed]{->}{FB1}{FC}_{Ff_1}%
		\ncline[linestyle=dashed]{->}{FB2}{FC}_{Ff_2}%
		\ncline{->}{FB1}{GT1}<{h_1}%
		\ncline{->}{FB2}{GT2}>{h_2}%
		\ncline[linestyle=dashed]{->}{GT1}{GTp}^{Gk_1}%
		\ncline[linestyle=dashed]{->}{GT2}{GTp}^{Gk_2}%
		\ncline[linestyle=dashed]{->}{FC}{GTp}<{h}%
	\end{psmatrix}
	\]
	
	We say that $\fA$ has the \emph{$(F,G)$-amalgamation property} if for all $A,B_1,B_2\in\fA$, $f_1:A\to B_1$, $f_2:A\to B_2$ and for all $T\in \fB$, $h_1:FB_1\to GT$, $h_2:FB_2\to GT$ if $h_1\circ Ff_1=h_2\circ Ff_2$, then there exist $C\in\fA$, $g_1:B_1\to C$, $g_2:B_2\to C$, $T'\in\fB$, $k:T\to T'$, $h: FC\to GT'$ such that the following diagrams commute:
	\[
	\begin{psmatrix}
	& & &  & & & [name=GTp] GT'\\
	& & &  & & [name=GT] GT\\
	[name=B1] B_1 & [name=C] C & &  [name=FB1] FB_1 & [name=FC] FC\\
	[name=A] A & [name=B2]B_2 & & [name=FA] FA & [name=FB2] FB_2
	\ncline{->}{A}{B1}<{f_1}
	\ncline{->}{A}{B2}^{f_2}
	\ncline[linestyle=dashed]{->}{B1}{C}^{g_1}
	\ncline[linestyle=dashed]{->}{B2}{C}>{g_2}
	\ncline{->}{FA}{FB1}<{Ff_1}
	\ncline{->}{FA}{FB2}^{Ff_2}
	\ncline[linestyle=dashed]{->}{FB1}{FC}^{Fg_1}
	\ncline[linestyle=dashed]{->}{FB2}{FC}>{Fg_2}
	\ncarc[arcangle=25]{->}{FB1}{GT}^{h_1}
	\ncarc[arcangle=-25]{->}{FB2}{GT}>{h_2}
	\ncarc[linestyle=dashed,arcangle=30]{->}{FC}{GTp}^{h}
	\ncline[linestyle=dashed]{->}{GT}{GTp}_{Gk}
	\end{psmatrix}
	\]
\end{definition}

Now we are ready to link up our previous observations in the following result:
\begin{theorem}\label{mainconstruction}
	Let $\fA$ and $\fB$ be a $\lambda$-algebroidal categories all of whose morphisms are monos, and suppose that $\fB_{<\lambda}$ has the joint embedding property and the amalgamation property.   Let $\fC$ be any category and let 
	 $F:\fA\to\fC$, $G:\fB\to\fC$ be functors such that 
	\begin{enumerate}
		\item $F$ is $\lambda$-cocontinuous and $\mu_{<\lambda}$-cocontinuous for all $\mu<\lambda$,
		\item $F$ preserves $\lambda$-smallness with respect to $G$, and 
		\item $G$ is $\lambda_{<\lambda}$-cocontinuous, 
		\item $G$ preserves monos.
		\item for every $A\in\fA_{<\lambda}$ and for every $B\in \fB_{<\lambda}$ there are at most $\lambda$ morphisms in $\fC(FA\to GB)$.
	\end{enumerate}
	Then $(F \downarrow G)$ contains an $(F\downarrow G)$-universal, $(F\downarrow G)_{<\lambda}$-homogeneous object if and only if $\fA_{<\lambda}$ has the $(F\restr_{\fA_{<\lambda}},G\restr_{\fB_{<\lambda}})$-joint embedding property, and the $(F\restr_{\fA_{<\lambda}},G\restr_{\fB_{<\lambda}})$-amalgamation property. 
\end{theorem}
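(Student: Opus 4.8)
The plan is to apply the Droste--G\"obel theorem (Theorem~\ref{DroGoe}) to the comma-category $(F\downarrow G)$, so the first order of business is to verify that $(F\downarrow G)$ meets its hypotheses, namely that it is $\lambda$-algebroidal with all morphisms monic. The $\lambda$-algebroidality is exactly the content of Proposition~\ref{algebroidal}: the assumptions (1)--(5) of the present theorem are precisely the assumptions (1)--(4) of that proposition together with the ``at most $\lambda$ morphisms'' condition (5) that Proposition~\ref{algebroidal} also requires. That all morphisms of $(F\downarrow G)$ are monic follows from a routine diagram chase: a morphism $(a,b)$ in $(F\downarrow G)$ is monic as soon as both $a$ and $b$ are monic in $\fA$ and $\fB$ respectively, and all morphisms in $\fA$ and $\fB$ are monos by hypothesis. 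Once these two facts are in hand, Theorem~\ref{DroGoe} tells us that $(F\downarrow G)$ has an $(F\downarrow G)$-universal, $(F\downarrow G)_{<\lambda}$-homogeneous object if and only if $(F\downarrow G)_{<\lambda}$ has the joint embedding property and the amalgamation property.

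So the real work is a translation: I must show that $(F\downarrow G)_{<\lambda}$ has the (categorial) JEP and AP if and only if $\fA_{<\lambda}$ has the $(F,G)$-JEP and the $(F,G)$-AP (using the restricted functors, and also invoking that $\fB_{<\lambda}$ has the ordinary JEP and AP). The key structural input, established inside the proof of Proposition~\ref{semialgebroidal} and recorded just before Proposition~\ref{algebroidal}, is that every $\lambda$-small object of $(F\downarrow G)$ is \emph{inherited}: it has the form $(A,f,B)$ with $A\in\fA_{<\lambda}$ and $B\in\fB_{<\lambda}$. This is what lets me pass freely between objects of $(F\downarrow G)_{<\lambda}$ and triples built from small objects of $\fA$ and $\fB$. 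I would prove the equivalence in both directions. For the ``easy'' direction, assume $(F\downarrow G)_{<\lambda}$ has JEP and AP and deduce the $(F,G)$-properties for $\fA_{<\lambda}$: given the data in the definition of the $(F,G)$-JEP (objects $B_1,B_2\in\fA_{<\lambda}$, $T\in\fB_{<\lambda}$ with $h_1\colon FB_1\to GT$, $h_2\colon FB_2\to GT$), I form the two inherited small objects $(B_1,h_1,T)$ and $(B_2,h_2,T)$, apply the categorial JEP to get a common object $(C,h,T')$ with morphisms into it, and read off the required $f_1,f_2,k_1,k_2,h$ from the components; the $(F,G)$-AP is obtained analogously from the categorial AP.

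For the converse direction --- which I expect to be the main obstacle --- I must manufacture the categorial JEP and AP in $(F\downarrow G)_{<\lambda}$ out of the $(F,G)$-properties of $\fA_{<\lambda}$ \emph{combined with} the ordinary JEP and AP of $\fB_{<\lambda}$. The subtlety is that the $(F,G)$-amalgamation definition only produces a single morphism $k\colon T\to T'$ on the $\fB$-side starting from a \emph{common} target $T$, whereas a categorial amalgamation square in the comma category starts from three distinct objects $(A,f_A,S)$, $(B_1,f_1,S_1)$, $(B_2,f_2,S_2)$ with \emph{different} $\fB$-components $S,S_1,S_2$. So the strategy must be to first amalgamate $S_1$ and $S_2$ over $S$ inside $\fB_{<\lambda}$ (using the AP of $\fB_{<\lambda}$, which lands in $\fB_{<\lambda}$ since $\fB$ is $\lambda$-algebroidal and small objects are closed under such amalgams via the colimit-of-a-finite-diagram argument), push the two $\fA$-to-$\fB$ maps forward along the resulting legs to a common $\fB$-object so that the hypothesis $h_1\circ Ff_1 = h_2\circ Ff_2$ of the $(F,G)$-AP can be arranged to hold, and only then apply the $(F,G)$-AP on the $\fA$-side. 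Checking that the square assembled from these two amalgamations genuinely commutes as a diagram in $(F\downarrow G)$, and that the glue morphism $h\colon FC\to GT'$ supplied by the $(F,G)$-AP is compatible with the $\fB$-amalgam, is the delicate diagram-chase at the heart of the argument; the JEP direction is the same manoeuvre with the apex $A$ deleted. Throughout I would use that $F$ and $G$ restrict to functors on the small subcategories and that the inherited-small characterization keeps every object produced inside $(F\downarrow G)_{<\lambda}$, so that all the invoked properties are available at the level of small objects.
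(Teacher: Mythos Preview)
Your proposal is correct and follows essentially the same route as the paper: you reduce to Theorem~\ref{DroGoe} via Proposition~\ref{algebroidal}, use the inherited-small characterisation of $(F\downarrow G)_{<\lambda}$, and then translate the categorial JEP/AP for the comma category into the $(F,G)$-JEP/AP for $\fA_{<\lambda}$, handling the converse direction by first amalgamating (resp.\ joining) on the $\fB$-side via the AP/JEP of $\fB_{<\lambda}$ before invoking the $(F,G)$-property. One small remark: when you deduce the $(F,G)$-AP from the categorial AP in the forward direction, note explicitly that the two $\fB$-legs $k_1,k_2$ coming out of the amalgam coincide (because the incoming $\fB$-morphisms were both $1_T$), so you really do get the single $k$ required by the definition.
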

\begin{proof}
	By construction, all morphisms of $(F \downarrow G)$ are monomorphisms. 
	From Proposition~\ref{algebroidal}, it follows that $(F \downarrow G)$ is $\lambda$-algebroidal and all $\lambda$-small objects of $(F\downarrow G)$ are inherited. 
	
	``$\Rightarrow$'' From Theorem~\ref{DroGoe} it follows that $(F\downarrow G)_{<\lambda}$ has the joint embedding property and the amalgamation property. Let $B_1,B_2\in\fA_{<\lambda}$, $T\in\fB_{<\lambda}$, $h_1:FB_1\to GT$, $h_2:FB_2\to GT$. Then $(B_1,h_1,T), (B_2,h_2,T)\in (F\downarrow G)_{<\lambda}$. Since $(F\downarrow G)_{<\lambda}$ has the joint embedding property, it follows that there exists $(C,h,T')\in (F\downarrow G)_{<\lambda}$ and $(f_1,k_1):(B_1,h_1,T)\to (C,h,T')$ and $(f_2,k_2):(B_2,h_2,T)\to (C,h,T')$. That is, the following diagram commutes:
		\[
	\begin{psmatrix}
[name=FB1] FB_1 & [name=FC] FC & [name=FB2] FB_2\\
[name=GT1] GT & [name=GTp]GT' & [name=GT2] GT%
		\ncline{->}{FB1}{FC}_{Ff_1}%
		\ncline{->}{FB2}{FC}_{Ff_2}%
		\ncline{->}{FB1}{GT1}<{h_1}%
		\ncline{->}{FB2}{GT2}>{h_2}%
		\ncline{->}{GT1}{GTp}^{Gk_1}%
		\ncline{->}{GT2}{GTp}^{Gk_2}%
		\ncline{->}{FC}{GTp}<{h}%
	\end{psmatrix}
	\]
	Hence $\fA_{<\lambda}$ has the $(F\restr_{\fA_{<\lambda}},G\restr_{\fB_{<\lambda}})$-joint embedding property.
	
	Let $A,B_1,B_2\in\fA_{<\lambda}$, $f_1:A\to B_1$, $f_2:A\to B_2$, $T\in\fB_{<\lambda}$, $h_1:FB_1\to GT$, $h_2:FB_2\to GT$, such that $h_1\circ Ff_1 = h_2\circ Ff_2$. Then $(A, h_1\circ Ff_1,T), (B_1,h_1,T), (B_2,h_2,T)\in(F\downarrow G)_{<\lambda}$. Moreover, $(f_1,1_T):(A,h_1\circ Ff_1,T)\to (B_1,h_1,T)$ and $(f_2,1_T): (A,h_1\circ Ff_1,T)\to(B_2,h_2,T)$. Since $(F\downarrow G)_{<\lambda}$ has the amalgamation property, there exists $(C,h,T')\in(F\downarrow G)_{<\lambda}$ and $(g_1,k_1):(B_1,h_1,T)\to (C,h,T')$ and $(g_2,k_2):(B_2,h_2,T)\to (C,h,T')$ such that $(g_1,k_1)\circ (f_1,1_T) = (g_2,k_2)\circ (f_2,1_T)$. In particular, $k_1=k_2=:k$, and $g_1\circ f_1=g_2\circ f_2$. It follows that $\fA_{<\lambda}$ has the $(F\restr_{\fA_{<\lambda}},G\restr_{\fB_{<\lambda}})$-amalgamation property.
	
	``$\Leftarrow$'' Let $(B_1,h_1,T_1), (B_2,h_2,T_2)\in(F\downarrow G)_{<\lambda}$. Since $\fB_{<\lambda}$ has the joint embedding property, there exist $T\in\fB_{<\lambda}$, $g_1:T_1\to T$, and $g_2:T_2\to T$. Since $\fA_{<\lambda}$ has the $(F\restr_{\fA_{<\lambda}},G\restr_{\fB_{<\lambda}})$-joint embedding property, there exist $C\in\fA_{<\lambda}$, $T'\in\fB_{\lambda}$, $f_1:B_1\to C$, $f_2: B_2\to C$, $h:FC\to GT'$, and $k_1,k_2:T\to T'$ such that the following diagram commutes:
	\[
	\begin{psmatrix}
[name=FB1] FB_1 & [name=FC] FC & [name=FB2] FB_2\\
[name=GT1] GT & [name=GTp]GT' & [name=GT2] GT%
		\ncline{->}{FB1}{FC}_{Ff_1}%
		\ncline{->}{FB2}{FC}_{Ff_2}%
		\ncline{->}{FB1}{GT1}<{Gg_1\circ h_1}%
		\ncline{->}{FB2}{GT2}>{Gg_2\circ h_2}%
		\ncline{->}{GT1}{GTp}^{Gk_1}%
		\ncline{->}{GT2}{GTp}^{Gk_2}%
		\ncline{->}{FC}{GTp}<{h}%
	\end{psmatrix}
	\]
	In particular, $(f_1,k_1\circ g_1):(B_1,h_1,T_1)\to (C,h,T')$ and $(f_2,k_2\circ g_2):(B_2,h_2,T_2)\to (C,h,T')$, so $(F\downarrow G)_{<\lambda}$ has the joint embedding property.
	
	Let $(A,h,T),(B_1,h_1,T_1),(B_2,h_2,T_2)\in(F\downarrow G)_{<\lambda}$, and let $(f_1,g_1):(A,h,T)\to (B_1,h_1,T_1)$ and $(f_2,g_2):(A,h,T)\to (B_2,h_2,T_2)$. Since $\fB_{<\lambda}$ has the amalgamation property, there exists $T'\in\fB_{<\lambda}$ and $k_1:T_1\to T'$, $k_2:T_2\to T'$ such that $k_1\circ g_1=k_2\circ g_2$. Since $\fA_{<\lambda}$ has the $(F\restr_{\fA_{<\lambda}},G\restr_{\fB_{<\lambda}})$-amalgamation property, there exists $C\in\fA_{<\lambda}$, $l_1:B_1\to C$, $l_2:B_2\to C$, $T''\in\fB_{<\lambda}$, $k:T'\to T''$, $\hat{h}:FC\to GT''$ such that the following diagrams commute:  
		\[
	\begin{psmatrix}
	& & &  & & & [name=GTp] GT''\\
	& & &  & & [name=GT] GT'\\
	[name=B1] B_1 & [name=C] C & &  [name=FB1] FB_1 & [name=FC] FC\\
	[name=A] A & [name=B2]B_2 & & [name=FA] FA & [name=FB2] FB_2
	\ncline{->}{A}{B1}<{f_1}
	\ncline{->}{A}{B2}^{f_2}
	\ncline{->}{B1}{C}^{l_1}
	\ncline{->}{B2}{C}>{l_2}
	\ncline{->}{FA}{FB1}<{Ff_1}
	\ncline{->}{FA}{FB2}^{Ff_2}
	\ncline{->}{FB1}{FC}^{Fl_1}
	\ncline{->}{FB2}{FC}>{Fl_2}
	\ncarc[arcangle=25]{->}{FB1}{GT}^{Gk_1\circ h_1}
	\ncarc[arcangle=-25]{->}{FB2}{GT}>{Gk_2\circ h_2}
	\ncarc[arcangle=30]{->}{FC}{GTp}^{\hat{h}}
	\ncline{->}{GT}{GTp}_{Gk}
	\end{psmatrix}
	\]
	It remains to note that $(l_1,k\circ k_1):(B_1,h_1,T_1)\to(C,\hat{h},T'')$, $(l_2,k\circ k_2):(B_2,h_2,T_2)\to (C,\hat{h}, T'')$, and that $(l_1,k\circ k_1)\circ (f_1,g_1)= (l_2,k\circ k_2)\circ (f_2,g_2)$. It follows that $(F\downarrow G)_{<\lambda}$ has the amalgamation property. It follows from Theorem~\ref{DroGoe} that $(F\downarrow G)$ has an $(F\downarrow G)$-universal, $(F\downarrow G)_{<\lambda}$-homogeneous object. 
\end{proof}

The $(F,G)$-joint embedding property and the $(F,G)$-amalgamation property are somewhat technical conditions. Fortunately, in most of the interesting cases,  they follow from a stronger condition that is independent of $G$.
Recall that a \emph{weak pushout square} in a category $\fC$ is a square 
\begin{equation}\label{pushoutsquare}
\begin{psmatrix}
	[name=B1]B_1  & [name=C]C\\
	[name=A]A  &[name=B2]B_2
	\ncline{->}{A}{B1}<{f_1}
	\ncline{->}{A}{B2}_{f_2}
	\ncline{->}{B1}{C}^{g_1}
	\ncline{->}{B2}{C}>{g_2}
\end{psmatrix}
\end{equation}
such that $(C,(g_1,g_2))$ is a weakly limiting cocone of the diagram
\[
\begin{psmatrix}
	[name=B1]B_1 \\
	[name=A]A  &[name=B2]B_2
	\ncline{->}{A}{B1}<{f_1}
	\ncline{->}{A}{B2}_{f_2}
\end{psmatrix}
\]
In this case we call $(C,(g_1,g_2))$ a \emph{weak pushout} of the morphisms $f_1$ and $f_2$.
If $(C,(g_1,g_2))$ is even a limiting cocone, then we call \eqref{pushoutsquare} a \emph{pushout square}, and we call $(C,(g_1,g_2))$ a \emph{pushout} of $f_1$ and $f_2$. 
\begin{definition}
	Let $\fA,\fC$ be categories, $F:\fA\to\fC$. 
	We say that $\fA$ has the \emph{strict joint embedding property with respect to $F$} if for every $B_1,B_2\in\fA$ there exists a $C\in \fA$ and $f_1:B_1\to C$, $f_2:B_2\to C$ such that $(FC, (Ff_1,Ff_2))$ is a weak coproduct of $FB_1$ and $FB_2$ in $\fC$.
	
	We say that $\fA$ has the \emph{strict amalgamation property with respect to $F$} if for all $A,B_1,B_2\in\fA$, $f_1:A\to B_1$, $f_2:A\to B_2$ there exists $C\in\fA$ and $g_1:B_1\to C$, $g_2:B_2\to C$ such that $g_1\circ f_1= g_2\circ f_2$ and such that the following is a weak pushout-square in $\fC$:
	\[
	\begin{psmatrix}
		[name=FB1]FB_1 & [name=FC]FC \\
		[name=FA]FA & [name=FB2]FB_2
		\ncline{->}{FA}{FB1}<{Ff_1}%
		\ncline{->}{FA}{FB2}^{Ff_2}%
		\ncline{->}{FB1}{FC}_{Fg_1}%
		\ncline{->}{FB2}{FC}>{Fg_2}%
	\end{psmatrix}
	\]
\end{definition}
\begin{lemma}\label{strictAPJEP}
	Let $\fA,\fB,\fC$ be categories and $F:\fA\to\fC$, $G:\fB\to \fC$. Then 
	\begin{enumerate}
	\item\label{FGJEP} if $\fA$ has the strict joint embedding property with respect to $F$, then it has also the $(F,G)$-joint embedding property,
	\item\label{FGAP} if $\fA$ has the strict amalgamation property with respect to $F$, then it has also the $(F,G)$-amalgamation property.
	\end{enumerate}
\end{lemma}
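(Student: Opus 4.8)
The plan is to prove each implication by directly unfolding the definition of weak (co)limit and supplying the mediating morphism that the $(F,G)$-property demands. The strict properties are stated purely in terms of $F$, producing a weak coproduct or weak pushout in $\fC$ among the images $FB_i$; the $(F,G)$-properties additionally involve an object $T\in\fB$ together with cocones $h_1,h_2$ into $GT$. The key observation is that these cocones into $GT$ are \emph{exactly the data of a competing compatible cocone}, so the weak universal property of the $F$-image square hands us the required filling morphism for free, with $T'=T$ and the maps $k_i$ taken to be identities.

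For part \eqref{FGJEP}, suppose $\fA$ has the strict joint embedding property with respect to $F$, and let $B_1,B_2\in\fA$, $T\in\fB$, and $h_1:FB_1\to GT$, $h_2:FB_2\to GT$ be given. First I would apply the strict JEP to obtain $C\in\fA$ and $f_1:B_1\to C$, $f_2:B_2\to C$ such that $(FC,(Ff_1,Ff_2))$ is a weak coproduct of $FB_1$ and $FB_2$ in $\fC$. Now $(GT,(h_1,h_2))$ is a cocone over the discrete diagram $\{FB_1,FB_2\}$, so by the weak universal property of the coproduct there is a mediating morphism $h:FC\to GT$ with $h\circ Ff_1=h_1$ and $h\circ Ff_2=h_2$. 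Setting $T'=T$ and $k_1=k_2=1_T$, the required diagram commutes (using $G1_T=1_{GT}$), which is precisely the $(F,G)$-joint embedding property.

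For part \eqref{FGAP}, suppose $\fA$ has the strict amalgamation property with respect to $F$, and let $A,B_1,B_2\in\fA$ with $f_1:A\to B_1$, $f_2:A\to B_2$, together with $T\in\fB$ and $h_1:FB_1\to GT$, $h_2:FB_2\to GT$ satisfying $h_1\circ Ff_1=h_2\circ Ff_2$. I would first apply the strict AP to get $C\in\fA$ and $g_1:B_1\to C$, $g_2:B_2\to C$ with $g_1\circ f_1=g_2\circ f_2$ such that the image square for $Ff_1,Ff_2,Fg_1,Fg_2$ is a weak pushout in $\fC$. The hypothesis $h_1\circ Ff_1=h_2\circ Ff_2$ says exactly that $(GT,(h_1,h_2))$ is a cocone over the span $FB_1\leftarrow FA\to FB_2$, so the weak pushout property yields a mediating morphism $h:FC\to GT'$ with $h\circ Fg_1=h_1$ and $h\circ Fg_2=h_2$, where again we may take $T'=T$ and $k=1_T$.

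The substance of the argument is just this dictionary: an $F$-image weak coproduct/weak pushout has a weak universal property against \emph{all} competing cocones in $\fC$, and a cocone whose apex happens to be of the form $GT$ is still such a competitor. No obstacle of real depth arises; the only point that needs care is bookkeeping the degenerate choices $T'=T$ and $k_i=1_T$ (respectively $k=1_T$) and checking that with $G1_T=1_{GT}$ the two required triangles/quadrangles in the definition of the $(F,G)$-properties do commute. I expect the mildly fiddly step to be confirming that the compatibility condition $h_1\circ Ff_1=h_2\circ Ff_2$ in the hypothesis of the $(F,G)$-amalgamation property coincides with the cocone condition over the span, so that the weak pushout is genuinely applicable.
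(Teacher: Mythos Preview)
Your proof is correct and follows essentially the same route as the paper's own argument: in each case one applies the strict property to obtain $C$ together with a weak coproduct (respectively weak pushout) in $\fC$, invokes the weak universal property against the cocone $(GT,(h_1,h_2))$ to produce the mediating morphism $h:FC\to GT$, and then takes $T'=T$ with $k_1=k_2=1_T$ (respectively $k=1_T$). The paper's proof is identical in substance, differing only in notation for the embeddings into $C$.
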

\begin{proof}
	\textbf{About \eqref{FGJEP}:}
	Let $B_1,B_2\in\fA$, $T\in\fB$, $h_1:FB_1\to GT$, $h_2:FB_2\to GT$. Let further $C\in\fA$, $\iota_1:B_1\to C$, $\iota_2:B_2\to C$ such that $(FC,(F\iota_1,F\iota_2))$ is a weak coproduct of $FB_1$ and $FB_2$. Then there exists $h:FC\to GT$ such that the following diagram commutes:
\[	\begin{psmatrix}
		&   &  [name=GT]GT\\
	  [name=FB1]FB_1 & [name=FC]FC \\
		& [name=FB2]FB_2
		\ncline{->}{FB1}{FC}^{F\iota_1}
		\ncline{->}{FB2}{FC}>{F\iota_2}
		\ncarc[arcangle=20]{->}{FB1}{GT}^{h_1}
		\ncarc[arcangle=-20]{->}{FB2}{GT}>{h_2}
		\ncline{->}{FC}{GT}^h
	\end{psmatrix}
\]
But then also the following diagram commutes:
	\[
	\begin{psmatrix}
[name=FB1] FB_1 & [name=FC] FC & [name=FB2] FB_2\\
[name=GT1] GT & [name=GTp]GT & [name=GT2] GT%
		\ncline{->}{FB1}{FC}_{Ff_1}%
		\ncline{->}{FB2}{FC}_{Ff_2}%
		\ncline{->}{FB1}{GT1}<{h_1}%
		\ncline{->}{FB2}{GT2}>{h_2}%
		\ncline{->}{GT1}{GTp}^{G1_T}%
		\ncline{->}{GT2}{GTp}^{G1_T}%
		\ncline{->}{FC}{GTp}<{h}%
	\end{psmatrix}
	\]
	whence $\fA$ has the $(F,G)$-joint embedding property.
	
	\textbf{About \eqref{FGAP}:} Let $A,B_1,B_2\in\fA$, $f_1:A\to B_1$, $f_2:A\to B_2$, $T\in\fB$, $h_1:FB_1\to GT$, $h_2:FB_2\to GT$ such that $h\circ Ff_1=h_2\circ Ff_2$. Let $C\in\fA$, $g_1:B_1\to C$, $g_2:B_2\to C$, such that $g_1\circ f_1=g_2\circ f_2$, and such that $(FC, (Fg_1,Fg_2))$ is a pushout of $Ff_1$ and $Ff_2$ in $\fC$. Then there exists $h:FC\to GT$, such that the following diagram commutes:
\[	\begin{psmatrix}
		&   &  [name=GT]GT\\
	  [name=FB1]FB_1 & [name=FC]FC \\
		[name=FA]FA& [name=FB2]FB_2
		\ncline{->}{FB1}{FC}^{Fg_1}
		\ncline{->}{FB2}{FC}>{Fg_2}
		\ncarc[arcangle=20]{->}{FB1}{GT}^{h_1}
		\ncarc[arcangle=-20]{->}{FB2}{GT}>{h_2}
		\ncline{->}{FC}{GT}^h
		\ncline{->}{FA}{FB1}<{Ff_1}
		\ncline{->}{FA}{FB2}^{Ff_2}
	\end{psmatrix}
\]
However, then also the  following diagram commutes:
	\[
	\begin{psmatrix}
	& & & [name=GTp] GT\\
	& & [name=GT] GT\\
	[name=FB1] FB_1 & [name=FC] FC\\
	[name=FA] FA & [name=FB2] FB_2
	\ncline{->}{FA}{FB1}<{Ff_1}
	\ncline{->}{FA}{FB2}^{Ff_2}
	\ncline{->}{FB1}{FC}^{Fg_1}
	\ncline{->}{FB2}{FC}>{Fg_2}
	\ncarc[arcangle=25]{->}{FB1}{GT}^{h_1}
	\ncarc[arcangle=-25]{->}{FB2}{GT}>{h_2}
	\ncarc[arcangle=30]{->}{FC}{GTp}^{h}
	\ncline{->}{GT}{GTp}_{G1_T}
	\end{psmatrix}
	\]
	whence $\fA$ has the $(F,G)$-amalgamation property.
\end{proof}

A homogeneous object $(U,u,T)$ in a comma category $(F\downarrow G)$ is bound to have many symmetries. It is natural to ask what are the implications for the symmetries of the domain $U$ and the codomain $T$. Of particular interest to us is domain $U$. In the following we will answer the question when $U$ is a $\lambda$-saturated object in $\fA$.  
\begin{definition}
	Let $\fA,\fB,\fC$ be categories and let $F:\fA\to\fC$ and $G:\fB\to\fC$. We say that $F$ and $G$ have the \emph{mixed amalgamation property} if for all $A,B\in\fA$, $T_1\in\fB$, $g:A\to B$, $a:FA\to GT_1$, there exists $T_2\in \fB$, $h: T_1\to T_2$, and $b:FB\to GT_2$ such that the following diagram commutes:
	\[
		\begin{psmatrix}
		[name=FA]FA & [name=GT1]GT_1\\
		[name=FB]FB & [name=GT2]GT_2.
		\ncline{->}{FA}{GT1}^a%
		\ncline{->}{FA}{FB}<{Fg}%
		\ncline[linestyle=dashed]{->}{FB}{GT2}^{b}%
		\ncline[linestyle=dashed]{->}{GT1}{GT2}<{Gh}%
		\end{psmatrix}
	\]
\end{definition}

\begin{proposition}\label{AFsat}
	Let $\fA,\fB$ be $\lambda$-algebroidal categories, $\fC$ be a category. Let $F:\fA\to\fC$, $G:\fB\to\fC$ be  functors such that
	\begin{enumerate}
		\item $F$ is faithful,
		\item $F$ is $\lambda$-cocontinuous and $\mu_{<\lambda}$-cocontinuous for all $\mu<\lambda$,
		\item $F$ preserves $\lambda$-smallness with respect to $G$, and 
		\item $G$ is $\lambda_{<\lambda}$-cocontinuous, 
		\item $G$ preserves monos.
		\item for all $A\in\fA_{<\lambda}$, $B\in\fB_{<\lambda}$  there are at most $\lambda$ morphisms from $FA$ to $GB$ in $\fC$,
		\item $(F\downarrow G)_{<\lambda}$ has the joint embedding property and the amalgamation property. 
	\end{enumerate}
	Let $(U,u,T)$ be an $(F\downarrow G)$-universal, $(F\downarrow G)_{<\lambda}$-homogeneous object in $(F\downarrow G)$. Then $U$ is $\fA_{<\lambda}$-saturated if and only if $F\restr_{\fA_{<\lambda}}$ and $G\restr_{\fB_{<\lambda}}$ have the mixed amalgamation property. 
\end{proposition}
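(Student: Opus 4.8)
The plan is first to replace the universality/homogeneity hypothesis by a saturation condition inside the comma category, and then to transfer saturation back and forth between $(F\downarrow G)$ and $\fA$ using a single factorization device. By Proposition~\ref{semialgebroidal}, hypotheses (2)--(5) guarantee that $(F\downarrow G)$ is semi-$\lambda$-algebroidal (all its morphisms being monic, as those of $\fA$ and $\fB$ are), so Proposition~\ref{univhomsat} applies and $(U,u,T)$ is simultaneously $(F\downarrow G)_{<\lambda}$-universal and $(F\downarrow G)_{<\lambda}$-saturated. Since $\fB$ is $\lambda$-algebroidal I fix once and for all a $\lambda$-chain $K$ of $\lambda$-small objects with limiting cocone $(T,(g_i)_{i<\lambda})$. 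Two factorization facts will be used repeatedly: for $A\in\fA_{<\lambda}$ any morphism $FA\to GT$ factors as $Gg_j\circ a'$ for some $j<\lambda$ and some $a'\colon FA\to G(Kj)$ (because $F$ preserves $\lambda$-smallness with respect to $G$), and for a $\lambda$-small object $S\in\fB_{<\lambda}$ any morphism $S\to T$ factors through some $g_j$ (because $S$ is $\lambda$-small). I also record that by Lemma~\ref{smallthings} an object $(A,a,T_1)$ with $A\in\fA_{<\lambda}$ and $T_1\in\fB_{<\lambda}$ lies in $(F\downarrow G)_{<\lambda}$.

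For the direction ``mixed amalgamation $\Rightarrow$ saturation'', take $A,B\in\fA_{<\lambda}$, $f\colon A\to U$ and $g\colon A\to B$. I first factor $u\circ Ff\colon FA\to GT$ through some $T_1:=Kj\in\fB_{<\lambda}$, obtaining $a\colon FA\to GT_1$ and $q:=g_j\colon T_1\to T$ with $Gq\circ a=u\circ Ff$; thus $(f,q)\colon(A,a,T_1)\to(U,u,T)$ is a morphism. Applying the mixed amalgamation property to $g$ and $a$ yields $T_2\in\fB_{<\lambda}$, $k\colon T_1\to T_2$ and $b\colon FB\to GT_2$ with $Gk\circ a=b\circ Fg$, so that $(g,k)\colon(A,a,T_1)\to(B,b,T_2)$ is a morphism of $(F\downarrow G)_{<\lambda}$. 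Now $(F\downarrow G)_{<\lambda}$-saturation of $(U,u,T)$ produces $(s,t)\colon(B,b,T_2)\to(U,u,T)$ with $(s,t)\circ(g,k)=(f,q)$; reading off the $\fA$-component gives $s\circ g=f$, so $h:=s$ is the desired extension. This direction is essentially formal once the factorization of $u\circ Ff$ is in place.

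For the converse, assume $U$ is $\fA_{<\lambda}$-saturated and take $A,B\in\fA_{<\lambda}$, $T_1\in\fB_{<\lambda}$, $g\colon A\to B$, $a\colon FA\to GT_1$. Since $(U,u,T)$ is $(F\downarrow G)_{<\lambda}$-universal there is a morphism $(p,q)\colon(A,a,T_1)\to(U,u,T)$, i.e.\ $p\colon A\to U$, $q\colon T_1\to T$ with $Gq\circ a=u\circ Fp$; saturation of $U$ then gives $s\colon B\to U$ with $s\circ g=p$. I next factor $q\colon T_1\to T$ through the chain ($T_1$ being $\lambda$-small) and $u\circ Fs\colon FB\to GT$ through the chain ($B$ being $\lambda$-small), and align both factorizations at a common index $j$: setting $T_2:=Kj$ and $m:=g_j$, I obtain $h\colon T_1\to T_2$ with $m\circ h=q$ and $b\colon FB\to GT_2$ with $Gm\circ b=u\circ Fs$. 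Then $Gm\circ(Gh\circ a)=Gq\circ a=u\circ Fp=u\circ F(s\circ g)=(u\circ Fs)\circ Fg=Gm\circ(b\circ Fg)$, and cancelling the monomorphism $Gm$ gives exactly $Gh\circ a=b\circ Fg$, which is the required mixed amalgamation square.

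The main obstacle is precisely this last cancellation in the converse: the two sides of the mixed amalgamation square are only seen to agree after postcomposition with $Gg_j\colon G(Kj)\to GT$, and the argument closes only because $g_j$ is a colimit injection in $\fB$ — hence a monomorphism, all morphisms of $\fB$ being monic — and $G$ preserves monos (hypothesis (5)), so $Gg_j$ is monic. The remaining care is bookkeeping: ensuring that every auxiliary object produced (the $Kj$ and the comma-objects built from them) genuinely lies in the small subcategories, which is guaranteed by the $\lambda$-algebroidal structure of $\fA$ and $\fB$ together with Lemma~\ref{smallthings}, and that the two factorizations in the converse can be pushed to a single index $j$, which uses regularity of $\lambda$ only through a finite maximum.
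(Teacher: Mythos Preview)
Your proof is correct and takes a genuinely different route from the paper's. The paper never fixes a chain in $\fB$ approximating $T$; instead, in each direction it writes a suitable comma-object---$(A,u\circ F\iota_1,T)$ in ``$\Leftarrow$'' and $(B,u\circ F\hat\iota,T)$ in ``$\Rightarrow$''---as a colimit of a $\lambda$-chain of small objects in $(F\downarrow G)$ (using Proposition~\ref{algebroidal}), and then factors the relevant morphisms through this chain via $\lambda$-smallness inside the comma category. In the converse the paper avoids any cancellation: it shows that the $\fA$-component $g_j$ of the approximating cocone is actually an isomorphism (a retract argument using that $B$ is $\lambda$-small and that $g_j$ is monic), and then reads off the mixed-amalgamation square directly. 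Your approach is more elementary: you work only with a single chain in $\fB$, invoke hypothesis~(3) and the $\lambda$-smallness of $T_1$ directly to factor through $GKj$, and then close the square by cancelling the monomorphism $Gg_j$ (this is where hypothesis~(5) really earns its keep in your argument). What you gain is economy---you never need the full algebroidal structure of $(F\downarrow G)$ beyond Proposition~\ref{semialgebroidal}; what the paper's approach buys is uniformity, since both directions use the same ``approximate a comma-object by a chain in $(F\downarrow G)_{<\lambda}$'' device, and it makes no explicit appeal to cancellation by $G$-images of $\fB$-morphisms. Incidentally, neither proof actually needs hypothesis~(1): the equality $s\circ g=f$ in your ``$\Leftarrow$'' (and $\hat g\circ\iota_2=\iota_1$ in the paper's) is read off from the $\fA$-component of a morphism equation in $(F\downarrow G)$, so the detour through $F$ and faithfulness is redundant.
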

\begin{proof}
	By Proposition~\ref{algebroidal} we have that $(F\downarrow G)$ is $\lambda$-algebroidal, and by the proof of Proposition~\ref{semialgebroidal} we have that every $\lambda$-small object of $(F\downarrow G)$ is inherited.

	``$\Leftarrow$'': Let $A,B\in\fA_{<\lambda}$, $\iota_1\in\fA(A\to U)$, $\iota_2\in\fA(A\to B)$. Then $u\circ F\iota_1:FA\to GT$.  In particular, $(A,u\circ F\iota_1,T)\in(F\downarrow G)$. Since $(F\downarrow G)$ is $\lambda$-algebroidal, $(A,u\circ F\iota_1,T)$ is the colimit of some $\lambda$-chain $H$ of $\lambda$-small objects in $(F\downarrow G)$. Suppose $Hi=(A_i,h_i,T_i)$ and let $((A,u\circ F\iota_1,T),(g_i,g_i')_{i<\lambda})$ be a limiting cocone of $H$. Then, by the assumptions and by Corollary~\ref{colimcor}, we have that $(A,(g_i)_{i<\lambda})$ is a limiting cocone of $UH$ (where $U:(F\downarrow G) \to F$ is one of the forgetful functors of $(F\downarrow G)$). Consider the identity morphism $1_A: A\to A$. Since $A$ is  $\lambda$-small, there exists some $i<\lambda$ and some $f\in\fA(A\to A_i)$ such that $g_i\circ f = 1_A$. With this it becomes clear that $(1_A,g_i'):(A,h_i\circ Ff,T_i)\to(A,u\circ F\iota_1,T)$. Moreover, $(\iota_1,1_T):(A,u\circ F\iota_1,T)\to (U,u,T)$. Thus $(\iota_1,g_i'):(A,h_i\circ Ff,T_i)\to(U,u,T)$.
	
	Since $F\restr_{\fA_{<\lambda}}$ and $G\restr_{\fB_{<\lambda}}$ have the mixed amalgamation property, there exists  $\hat{T}_i\in\fB_{<\lambda}$, $h\in\fB(T_i\to\hat{T}_i)$, and $b\in\fC(FB\to G\hat{T}_i)$ such that the following diagram commutes:
	\[
	\begin{psmatrix}
	[name=FA] FA & [name=GTi] GT_i\\
	[name=FB] FB & [name=GhTi] G\hat{T}_i
	\ncline{->}{FA}{GTi}_{h_i\circ Ff}
	\ncline{->}{FB}{GhTi}^{b}
	\ncline{->}{FA}{FB}<{F\iota_2}
	\ncline{->}{GTi}{GhTi}>{Gh}
	\end{psmatrix}
	\]
	This means that $(\iota_2,h):(A,h_i\circ Ff,T_i)\to(B,b,\hat{T}_i)$. Since $(U,u,T)$ is $(F\downarrow G)$-universal and $(F\downarrow G)_{<\lambda}$-homogeneous, we obtain from Proposition~\ref{univhomsat} that it is $(F\downarrow G)_{<\lambda}$-saturated. It follows that there exists $(\hat{g},\hat{h}):(B,b,\hat{T}_i)\to (U,u,T)$, such that the following diagram commutes:
	\[
	\begin{psmatrix}
		[name=FU] FU & [name=GT] GT \\
		[name=FA] FA & [name=GT2] GT \\
		[name=FA2] FA & [name=GTi] GT_i\\
		[name=B] B & [name=GhTi]G\hat{T}_i
		\ncline{->}{FU}{GT}^u
		\ncline{->}{FA}{GT2}^{u\circ Fu}
		\ncline{->}{FA2}{GTi}^{h_i\circ Ff}
		\ncline{->}{B}{GhTi}^{b}
		\ncline{->}{FA}{FU}<{F\iota_1}
		\ncline{->}{FA2}{FA}<{F1_A}
		\ncline{->}{FA2}{B}<{F\iota_2}
		\ncline{->}{GT2}{GT}>{G1_T}
		\ncline{->}{GTi}{GT2}>{Gg_i'}
		\ncline{->}{GTi}{GhTi}>{Gh}
		\ncarc[arcangle=50]{->}{B}{FU}<{F\hat{g}}
		\ncarc[arcangle=-50]{->}{GhTi}{GT}>{G\hat{h}}
	\end{psmatrix}
	\]
	In particular, $F\iota_1= F\hat{g}\circ F\iota_2$. Since $F$ is faithful, it follows that $\iota_1=\hat{g}\circ\iota_2$. Hence $U$ is $\fA_{<\lambda}$-saturated.
	
	``$\Rightarrow$'': Suppose, $U$ is $\fA_{<\lambda}$-saturated. Let $A,B\in\fA_{<\lambda}$, $T_1\in\fB_{<\lambda}$, $a:FA\to GT_1$, and $g: A\to B$. Since $(U,u,T)$ is $(F\downarrow G)$-universal, there exists $(\iota,\iota'):(A,a,T_1)\to(U,u,T)$. Since $U$ is $\fA_{<\lambda}$-saturated, there exists $\hat{\iota}\in\fA(B\to U)$ such that $\iota=\hat{\iota}\circ g$. Hence the following diagram commutes:
	\[
	\begin{psmatrix}
		[name=FA]FA & [name=GT1]GT_1 \\
		[name=FB]FB & [name=GT]GT\\
		[name=FU]FU
		\ncline{->}{FA}{GT1}^{a}
		\ncline{->}{FB}{GT}^{u\circ F\hat\iota}
		\ncline{->}{FU}{GT}_{u}
		\ncline{->}{FA}{FB}<{Fg}
		\ncline{->}{FB}{FU}<{F\hat\iota}
		\ncline{->}{GT1}{GT}>{G\iota'}
	\end{psmatrix}
	\]
	Since $(F\downarrow G)$ is $\lambda$-algebroidal, $(B,u\circ F\hat\iota,T)$ is the colimit of a $\lambda$-chain $H$ of objects in $(F\downarrow G)_{<\lambda}$. Suppose $Hi=(B_i,h_i,\hat{T}_i)$  ($i<\lambda$), and  $((B,u\circ F\hat\iota,T),(g_i,g_i')_{i<\lambda})$ is a limiting cocone for $H$. Since $(A,a,T_1)$ is $\lambda$-small, there exists some $i<\lambda$ and some $(f,f'):(A,a,T_1)\to(B_i,h_i,\hat{T}_i)$ such that $(g_i,g_i')\circ (f,f')=(g,\iota')$. Since $B$ is $\lambda$-small, there exists some $j<\lambda$ and some $\hat{f}:B\to B_j$ such that $g_j\circ \hat{f}=1_B$. Since $g_j$ is a monomorphism (recall that all morphisms of $\fA$ are monos), from $g_j\circ\hat{f}\circ g_j=g_j$, it follows that $\hat{f}\circ g_j = 1_{B_j}$. Hence,  $g_j$ is an isomorphism and $\hat{f}=g_j^{-1}$. Without loss of generality we can assume that $i=j$. Now observe that the following diagram commutes:
	\[
	\begin{psmatrix}
	[name=FA]FA & [name=GT1]GT_1 \\
	[name=FBi]FB_i & [name=GhTi]G\hat{T}_i\\
	[name=FB]FB & [name=GhTi2]G\hat{T}_i\\
	[name=FB2]FB & [name=GT]GT
	\ncline{->}{FA}{GT1}_a
	\ncline{->}{FBi}{GhTi}_{h_i}
	\ncline{->}{FB}{GhTi2}_{h_i\circ F\hat{f}}
	\ncline{->}{FB2}{GT}^{u\circ F\hat\iota}
	\ncline{->}{FA}{FBi}<{Ff}
	\ncline{->}{FBi}{FB}<{Fg_i}
	\ncline{->}{FB}{FB2}<{F1_B}
	\ncline{->}{GT1}{GhTi}>{Gf'}
	\ncline{->}{GhTi}{GhTi2}>{G1_{\hat{T}_i}}
	\ncline{->}{GhTi2}{GT}>{Gg_i'}
	\end{psmatrix}
	\] 
	Since $g=g_i\circ f$, we have that the following diagram commutes:
	\[
	\begin{psmatrix}
	[name=FA]FA & [name=GT1]GT_1 \\
	[name=FB]FB & [name=GhTi2]G\hat{T}_i\\
	\ncline{->}{FA}{GT1}_a
	\ncline{->}{FB}{GhTi2}_{h_i\circ F\hat{f}}
	\ncline{->}{FA}{FB}<{Fg}
	\ncline{->}{GT1}{GhTi2}>{Gf'}
	\end{psmatrix}
	\] 	
	It follows that $F\restr_{\fA_{<\lambda}}$ and $G\restr_{\fB_{<\lambda}}$ have the mixed amalgamation property. 
\end{proof}

\section{Universal structures through universal homomorphisms}\label{s4}
In this section we will talk about our first application of Theorem~\ref{mainconstruction} --- the construction of universal homomorphisms and their use for the construction of universal structures.

Let us us first of all define  the objects of interest in this section --- the universal homogeneous homomorphisms. 
\begin{definition}\label{univhom}
	Let $\cC$ be an age, $\bU\in\overline{\cC}$, and let $\bT$ be a countable structure of the same type like $\bU$. A homomorphism $u:\bU\to\bT$ is called \emph{universal within $\overline{\cC}$} if for every $\bA\in\overline{\cC}$, and for every homomorphism $h:\bA\to\bT$ there exists an embedding $\iota:\bA\injto\bU$ such that $h=u\circ\iota$
	
	We call $u$ \emph{homogeneous} if for every finitely generated substructure $\bA\le\bU$ and for every embedding $\iota:\bA\to\bU$ with $u=u\circ\iota$ there exists an automorphism $\alpha$ of $\bU$ such that $u\circ\alpha=u$ and such that  $\alpha$ restricted to $A$ is equal to $\iota$.
\end{definition}

Before coming to the result about the existence of universal homogeneous homomorphisms, we have to introduce the notions of strict amalgamation classes in the sense of Dolinka (cf. \cite[Sec.1.1]{Dol12}). 

\begin{definition}
	Let $\cC$ be a \Fraisse-class. Then we say that $\cC$ has the \emph{strict amalgamation property} if for all $\bA,\bB_1,\bB_2\in\cC$, and for all embeddings $f_1:\bA\injto\bB_1$, $f_2:\bA\injto\bB_2$ there exists some $\bC\in\cC$ and homomorphisms $g_1:\bB_1\to\bC$, $g_2:\bB_2\to\bC$ such that the following is a pushout-square in $(\overline{\cC},\rightarrow)$:
	\[
	\begin{psmatrix}
		[name=B1]\bB_1 & [name=C]\bC\\
		[name=A]\bA & [name=B2] \bB_2.
		\ncline{H->}{A}{B1}<{f_1}
		\ncline{H->}{A}{B2}^{f_2}
		\ncline{->}{B1}{C}_{g_1}
		\ncline{->}{B2}{C}>{g_2}
	\end{psmatrix}
	\] 
	A \emph{strict \Fraisse-class} is a \Fraisse-class that enjoys the strict amalgamation property.
\end{definition}
Note that $g_1$ and $g_2$, if they exist, will always be embeddings. Thus the strict amalgamation property postulates canonical amalgams. Moreover, if $\cC$ has the strict amalgamation property then the category $(\cC,\injto)$ has the strict amalgamation property and the strict joint embedding property with respect to the identical embedding of $(\cC,\injto)$ into $(\overline{\cC},\to)$. 

If $\cU$ is a strict \Fraisse-class, then a \Fraisse-class $\cC$ that is a subclass of $\cU$,  will be called \emph{free in $\cC$} if it is closed with respect to canonical amalgams in $\cU$.

Note that every free amalgamation class of relational structures over the signature $R$, in our terminology, is a free \Fraisse-class in the class of all finite relational $R$-structures.  Moreover, every free amalgamation class is also a strict \Fraisse-class. However, there are strict \Fraisse-classes that are not free amalgamation classes. The class of finite partial orders is an example.   

\begin{theorem}\label{hom-constraints} 
	Let $\cU$ be a strict \Fraisse-class. Let $\bT\in\overline{\cU}$, and let $\cC\subseteq\cU$ be a \Fraisse-class that is free in $\cU$.  
	Then  there exists a universal homogeneous homomorphism $u:\bU\to \bT$ within $\overline\cC$.
	Moreover, if $\hat{u}:\hat\bU\to\bT$ is another such homomorphism, then there exists an isomorphism $h:\hat\bU\to\bU$ such that  $\hat{u}=u\circ h$. 
\end{theorem}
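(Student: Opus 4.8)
The plan is to realise the desired homomorphism as the domain component of a universal homogeneous object in a suitably chosen comma category, and then to invoke Theorem~\ref{mainconstruction}. Concretely, I would set $\fA:=(\overline\cC,\injto)$, so that (by the first Example of Section~\ref{s6}) $\fA$ is $\aleph_0$-algebroidal with all morphisms monic and $\fA_{<\aleph_0}$ induced by $\cC$; set $\fC:=(\overline\cU,\to)$; and let $F\colon\fA\to\fC$ be the faithful functor that regards a structure of $\overline\cC$ as an object of $\overline\cU$ and an embedding as a homomorphism (well defined since $\cC\subseteq\cU$ forces $\overline\cC\subseteq\overline\cU$). For $\fB$ I would take the degenerate one-object category whose only morphism is the identity, with $G\colon\fB\to\fC$ sending its object to $\bT$. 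Then an object of $(F\downarrow G)$ is a structure $\bA\in\overline\cC$ equipped with a homomorphism $\bA\to\bT$, a morphism is an embedding commuting with the homomorphisms to $\bT$, and an automorphism of $(\bU,u,\ast)$ is precisely an $\alpha\in\Aut(\bU)$ with $u\circ\alpha=u$. Hence an $(F\downarrow G)$-universal, $(F\downarrow G)_{<\aleph_0}$-homogeneous object $(\bU,u,\ast)$ is exactly a universal homogeneous homomorphism $u\colon\bU\to\bT$ within $\overline\cC$ in the sense of Definition~\ref{univhom}.

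Next I would verify the hypotheses of Theorem~\ref{mainconstruction} with $\lambda=\aleph_0$. That $\fB$ is $\aleph_0$-algebroidal, with $\fB_{<\aleph_0}=\fB$ trivially having the joint embedding and amalgamation properties, is the second (degenerate) Example of Section~\ref{s6}. Because $\fB$ is degenerate, its only $\omega$-chains are constant, so conditions~(2)--(4) of Theorem~\ref{mainconstruction} hold trivially, and condition~(5) reduces to the remark that a finitely generated $\bA$ admits at most countably many homomorphisms into the countable $\bT$, each being determined by the images of finitely many generators. The only condition needing real work is the $\aleph_0$- and $\mu_{<\aleph_0}$-cocontinuity of $F$: a colimit of an $\omega$-chain in $\fA$ is its directed union (which lies in $\overline\cC$, as $\overline\cC$ is closed under directed unions), and I would check that this union with its inclusion maps is again a colimit in $\fC$, since any compatible cocone of homomorphisms factors uniquely through it — every finite tuple, hence every relation instance and function value, already occurs in a single member of the chain. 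For finite $\mu$ the chain has a last object and cocontinuity is automatic.

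The heart of the proof is producing the $(F\restr_{\fA_{<\aleph_0}},G\restr_{\fB_{<\aleph_0}})$-joint embedding and amalgamation properties, and this is where the hypotheses on $\cU$ and $\cC$ are used. Since $\cU$ is a strict \Fraisse-class, the canonical amalgam of a span $\bB_1\hookleftarrow\bA\hookrightarrow\bB_2$ in $\cC$ is a pushout in $\fC=(\overline\cU,\to)$, and since $\cC$ is free in $\cU$ this pushout again lies in $\cC$; thus $\fA_{<\aleph_0}$ has the strict amalgamation property with respect to $F$. For joint embedding I would amalgamate over the initial object $I$ of $\fC$ — the substructure generated by the constants (the empty structure if there are none), which lies in $\cC$ by (HP) and admits a unique homomorphism into every structure; the canonical amalgam over $I$ is then a coproduct in $\fC$, giving the strict joint embedding property with respect to $F$. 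By Lemma~\ref{strictAPJEP} both strict properties upgrade to the $(F,G)$-joint embedding and $(F,G)$-amalgamation properties, so Theorem~\ref{mainconstruction} yields the object $(\bU,u,\ast)$. I expect this paragraph to be the main obstacle: identifying coproducts in $(\overline\cU,\to)$ with canonical amalgams over the initial object, and using freeness to keep all amalgams inside $\cC$.

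Finally I would spell out the two translations and deduce uniqueness. Universality of $u$ is immediate: for $h\colon\bA\to\bT$ with $\bA\in\overline\cC$ the triple $(\bA,h,\ast)$ lies in $(F\downarrow G)$, so $(F\downarrow G)$-universality gives a morphism $(\iota,\mathrm{id})$, i.e.\ an embedding $\iota$ with $h=u\circ\iota$. Homogeneity follows by applying $(F\downarrow G)_{<\aleph_0}$-homogeneity to the two morphisms $(j,\mathrm{id})$ and $(\iota,\mathrm{id})$ out of $(\bA,u\restr_{\bA},\ast)$, where $j$ is the inclusion of a finitely generated $\bA\le\bU$ and $\iota$ satisfies $u\circ\iota=u\restr_{\bA}$; the resulting comma-automorphism $(\alpha,\mathrm{id})$ supplies the automorphism $\alpha$ of $\bU$ with $u\circ\alpha=u$ and $\alpha\restr_{A}=\iota$ required by Definition~\ref{univhom}. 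Since both translations are reversible, any second universal homogeneous homomorphism $\hat u\colon\hat\bU\to\bT$ gives a second $(F\downarrow G)$-universal, $(F\downarrow G)_{<\aleph_0}$-homogeneous object, and the uniqueness clause of Theorem~\ref{DroGoe} produces an isomorphism $(h,\mathrm{id})\colon(\hat\bU,\hat u,\ast)\to(\bU,u,\ast)$, that is, an isomorphism $h\colon\hat\bU\to\bU$ with $\hat u=u\circ h$.
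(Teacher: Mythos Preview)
Your proposal is correct and follows essentially the same route as the paper: the same choice of $\fA=(\overline\cC,\injto)$, $\fC=(\overline\cU,\to)$, degenerate $\fB$ with object $\bT$, and the same reduction of the $(F,G)$-amalgamation and joint embedding properties to the strict versions via Lemma~\ref{strictAPJEP}, using that $\cU$ is a strict \Fraisse-class and $\cC$ is free in $\cU$, followed by the appeal to Theorems~\ref{mainconstruction} and~\ref{DroGoe}. You supply more detail than the paper does (notably on the cocontinuity of $F$ and on unpacking the comma-category notions back into Definition~\ref{univhom}), but the architecture is identical.
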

\begin{proof}
	Our goal is to invoke Theorem~\ref{mainconstruction}. We set $\fC:=(\overline\cU,\to)$, and $\fA=(\overline{\cC},\injto)$. Clearly, $\fA$ is $\aleph_0$-algebroidal. The $\aleph_0$-small objects in $\fA$ are just the finitely generated structures.
	
	Let $\fB$ be the category that has just one object $\bT$ and only the identity morphism $1_\bT$. Then $\fB$ is a trivial example of an $\aleph_0$-algebroidal, and clearly, $\fB_{<\aleph_0}=\fB$ has the amalgamation property and the joint embedding property.
	
	Let  $F: \fA\to\fC$, $G:\fB\to\fC$ be the identical embedding-functors.
	
	The conditions imposed onto $F$ and $G$ by Theorem~\ref{mainconstruction} are trivially fulfilled. 
	
	Since $\cU$ is a strict \Fraisse-class it follows that $\fA_{<\aleph_0}$ has the strict amalgamation property with respect to $F$. Moreover, since $\fA$ has an initial object (the structure that is generated by the empty set), and since $F$ maps this object to an initial object of $\fC$, it follows that $\fA_{<\aleph_0}$ also has the strict joint embedding property with respect to $F$. Hence, by Lemma~\ref{strictAPJEP}, it follows  that $\fA_{<\aleph_0}$ has the $(F\restr_{\fA_{\aleph_0}},G\restr_{\fB_{\aleph_0}})$ joint embedding property and the $(F\restr_{\fA_{\aleph_0}},G\restr_{\fB_{\aleph_0}})$ amalgamation property.  
	
	Now, by Theorem~\ref{mainconstruction}, there exists an $(F\downarrow G)$-universal, $(F\downarrow G)_{<\aleph_0}$-homogeneous object $(\bU,u,\bT)$ in $(F\downarrow G)$. It follows that  $u:\bU\to\bT$ is a universal homogeneous homomorphism to $\bT$ within $\overline{\cC}$. 
	
	Let $\hat{u}:\hat\bU\to\bT$ be another universal homogeneous homomorphism to $\bT$ within $\overline\cC$. Then $(\hat\bU,\hat{u},\bT)$ is an $(F\downarrow G)$-universal, $(F\downarrow G)_{<\aleph_0}$-homogeneous object in $(F\downarrow G)$. However, by Theorem~\ref{DroGoe},  there exists an isomorphism $(h,1_T):(\hat{\bU},\hat{u},\bT)\to(\bU,u,\bT)$ in $(F\downarrow G)$. This means that $h:\hat\bU\to\bU$ is an isomorphism such that $u\circ h=\hat{u}$.  
\end{proof}

\begin{lemma}\label{retract}
	With the notions from above, suppose that $\bT\in\overline{\cC}$.
	Let $u:\bU\to\bT$ be a unary  universal  homomorphism within  $\overline\cC$. Then $u$ is a retraction.
\end{lemma}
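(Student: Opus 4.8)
The plan is to read the section off directly from the universality of $u$, applied to the identity homomorphism of $\bT$. Recall that a homomorphism $r\colon\bU\epito\bT$ is a \emph{retraction} precisely when it admits a section, i.e.\ a morphism $\iota$ with $r\circ\iota$ the identity on the codomain. So it suffices to produce an embedding $\iota\colon\bT\injto\bU$ with $u\circ\iota=\operatorname{id}_\bT$, and the whole argument reduces to exhibiting such an $\iota$.

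First I would invoke the standing hypothesis $\bT\in\overline{\cC}$. This is exactly what licenses taking $\bA:=\bT$ in Definition~\ref{univhom}. The identity map $\operatorname{id}_\bT\colon\bT\to\bT$ is a homomorphism from a structure lying in $\overline{\cC}$ into the target $\bT$ of $u$, so the universality of $u$ within $\overline{\cC}$ yields an embedding $\iota\colon\bT\injto\bU$ with $\operatorname{id}_\bT=u\circ\iota$.

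This $\iota$ is the desired section. Since $u\circ\iota=\operatorname{id}_\bT$, the map $u$ is surjective (every $t\in\bT$ equals $u(\iota(t))$) and $\iota$ is a homomorphic right inverse, so $u$ is a retraction onto the induced substructure $\iota(\bT)\cong\bT$ of $\bU$ (recall an embedding is a strong injective homomorphism, so $\iota$ is an isomorphism onto $\iota(\bT)\le\bU$). If one prefers the idempotent formulation, note that $\iota\circ u\colon\bU\to\bU$ satisfies $(\iota\circ u)\circ(\iota\circ u)=\iota\circ(u\circ\iota)\circ u=\iota\circ u$, so it is an idempotent endomorphism of $\bU$ whose image is the copy $\iota(\bT)$ of $\bT$.

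I do not expect a genuine obstacle here: the entire content is the observation that the identity is an admissible test homomorphism in the sense of Definition~\ref{univhom}. In particular, neither the homogeneity of $u$ nor the strictness of $\cU$ or the freeness of $\cC$ is needed for this step; those hypotheses only matter later, when one wants to recognize $\iota\circ u$ as a \emph{universal homogeneous} retraction. The single point worth stating explicitly is that $\bT\in\overline{\cC}$ is essential: without it, the universality of $u$ says nothing about homomorphisms whose domain is $\bT$, and there would be no reason for a section to exist.
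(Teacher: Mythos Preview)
Your proof is correct and follows essentially the same approach as the paper: apply the universality of $u$ to the identity homomorphism $1_\bT$ (which is admissible since $\bT\in\overline{\cC}$) to obtain an embedding $\iota\colon\bT\injto\bU$ with $u\circ\iota=1_\bT$, making $u$ a retraction. The paper's proof is just the first two sentences of your argument; your additional remarks about surjectivity, the idempotent $\iota\circ u$, and which hypotheses are actually needed are accurate but not part of the paper's version.
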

\begin{proof}
	Since $\bT\in\overline{\cC}$, and since $u$ is universal, there exists an embedding $\iota: \bT\injto\bU$ such that $u\circ\iota=1_\bT$.  Thus, $\iota$ is a right-inverse of $u$, and $u$ is a retraction. 
\end{proof}

Following we show on hands of a few examples, how universal homomorphisms give rise to universal structures:
\begin{example}
	Suppose, in Theorem~\ref{hom-constraints} we take $\bT\in\overline\cC$. The homomorphism equivalence class $\HE_{\overline{\cC}}(\bT)$ of $\bT$ in $\overline{\cC}$ is the class of all structures from $\overline{\cC}$ that are homomorphism-equivalent to $\bT$. Observe that  universal homogeneous homomorphism to $\bT$ within $\overline{\cC}$ gives rise to a universal object in the homomorphism-equivalence class of $\bT$ within $\overline{\cC}$. Indeed, if $u:\bU\to\bT$ is a  universal homogeneous homomorphism, and $\bA\in\overline\cC$ is homomorphism-equivalent with $\bT$, then there exists a homomorphism $h:\bA\to\bT$, and hence there exists an embedding $\iota:\bA\injto\bU$ such that $u\circ\iota=h$. In particular, $\bA$ embeds into $\bU$. On the other hand since $\bT\in\overline{\cC}$, we know from Lemma~\ref{retract} that $\bT$ is a retract of $\bU$. In particular, $\bU$ is homomorphism-equivalent with $\bT$. Hence it is a universal element in $\HE_{\overline{\cC}}(\bT)$.
\end{example}
Recall that a countable structure $\bU$ is called \emph{$\aleph_0$-categorical} if every other countable structure with the same first order theory like $\bU$ is isomorphic to $\bU$. The well-known Ryll-Nardzewski Theorem characterizes the $\aleph_0$-categorical structures as those structures that have an oligomorphic automorphism group (cf.\cite[Thm.6.3.1]{Hod97}).  Here, a permutation group $G$ on a set $\Omega$ is called \emph{oligomorphic} if for every $n\in\bN\setminus\{0\}$, the coordinate wise action of $G$ on $\Omega^n$ has just finitely many orbits (cf. \cite{Cam90}).

\Fraisse-limits are structures of exceptional symmetry. Often they have an oligomorphic automorphism group. This is for instance the case for the \Fraisse-limits of \Fraisse-classes of relational structures over finite signatures (like, e.g. $(\bQ,<)$, the Rado-graph, the universal homogeneous poset, etc.). 
Yet, the next example will show that in some cases the domain of a universal homogeneous homomorphism will not have an oligomorphic automorphism group --- even if we work over a finite relational signature:
\begin{example}
	A well-founded poset is a poset that does not contain infinite properly descending chains. 
	Let  $(P,<)$ be a well-founded poset (for technical reasons we work with strict posets). We may define a height function that assigns to each element of $P$ an ordinal number --- its height (cf. \cite[2.7.1]{Fra00}. The minimal elements of $(P,<)$ have height $0$. When removing all minimal elements from $P$ we obtain a new well-founded poset $(P',<)$. The minimal elements of this poset will be the elements of height $1$ in the original poset $(P,<)$. Proceeding by transfinite induction, to each element of $P$ a height is assigned. Note that the set of all heights of elements from $P$ is itself an ordinal number. This number is called the height of $(P,<)$ and is denoted by $\Ht(P,<)$. The sketched construction of a hight function is at the same time a proof that from every well-founded poset $(P,<)$ there exists a homomorphism to $(\Ht(P,<),<)$. On the other hand, for some poset $(Q,<)$ there is a homomorphism to some $(\alpha,\in)$, where $\alpha$ is an ordinal number, then $(Q,<)$ is well-founded of height $\le\alpha$.  With these remarks in mind we can construct universal well-founded posets of bounded height (we do so only for countable heights, but Theorem~\ref{mainconstruction} allows in principle a construction for larger heights):
	
	Let $\alpha$ be a countable ordinal number. Let $\bT=(\alpha,<)$. Take as $\cU$ the class of finite partial orders. And define $\cC:=\cU$. Clearly, $\cU$ is a strict \Fraisse-class. So by Theorem~\ref{hom-constraints}, there exists a universal homogeneous homomorphism $u:\bU\to\bT$ within $\overline{\cC}$. The poset $\bU$ is then a universal element in the class of all countable well-founded posets of height $\le\alpha$. Note that if $\alpha$ is infinite, then $\bU$ can not be $\aleph_0$-categorical, since automorphisms of $\bU$ have to preserve the height of elements.
\end{example}
 In the following we derive sufficient conditions for the domain of a universal homogeneous homomorphism to have an oligomorphic automorphism group.

Recall that a structure $\bA$ is \emph{loclally finite} if every finitely generated substructure of $\bA$ is finite. Moreover, $\bA$ is called \emph{uniformly locally} finite if there exists a function $\chi:\aleph_0\to\aleph_0$ such that every $n$-generated substructure of $\bA$ has size at most $\chi(n)$. Recall also that every countable homogeneous uniformly locally finite structure is $\aleph_0$-categorical. 
\begin{proposition}\label{oligomorphic}
	Let $\cU$ be an age,  let $\cC\subseteq\cU$ be a \Fraisse-class whose \Fraisse-limit  is  locally finite and has an oligomorphic automorphism group. Let $\bT\in\overline{\cU}$. Finally, let $u:\bU\to \bT$ be a universal homogeneous homomorphism to $\bT$ within $\overline{\cC}$. If $\Aut(\bT)$ is oligomorphic, then $\Aut(\bU)$ is oligomorphic, too.
\end{proposition}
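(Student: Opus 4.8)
The plan is to verify the Ryll--Nardzewski criterion directly, i.e.\ to show that for every $n$ the group $\Aut(\bU)$ has only finitely many orbits on $U^n$. The first preliminary step is to observe that $\bU$ is uniformly locally finite. Indeed, since the \Fraisse-limit $\bL$ of $\cC$ is locally finite and $\Aut(\bL)$ is oligomorphic, $\bL$ realizes only finitely many isomorphism types of $n$-generated substructures, each of which is finite; hence their sizes are bounded by some $\chi(n)<\aleph_0$. As $\bU\in\overline{\cC}$ we have $\Age(\bU)\subseteq\cC=\Age(\bL)$, so every $n$-generated substructure of $\bU$ has at most $\chi(n)$ elements and its isomorphism type is realized in $\bL$; in particular there are only finitely many possibilities for the isomorphism type of $(\langle\ba\rangle,\ba)$ as $\ba$ ranges over $U^n$.

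The key lemma I would establish is that every automorphism of $\bT$ lifts along $u$: for each $\tau\in\Aut(\bT)$ there is $h_\tau\in\Aut(\bU)$ with $u\circ h_\tau=\tau\circ u$. To prove it I would check that $\tau\circ u\colon\bU\to\bT$ is again a universal homogeneous homomorphism within $\overline{\cC}$. Universality is immediate: given $h\colon\bA\to\bT$ with $\bA\in\overline{\cC}$, apply universality of $u$ to $\tau^{-1}\circ h$ to obtain an embedding $\iota$ with $u\circ\iota=\tau^{-1}\circ h$, whence $(\tau\circ u)\circ\iota=h$. Homogeneity is equally direct, since for an embedding $\iota$ of a finitely generated $\bA\le\bU$ the condition $(\tau\circ u)\circ\iota=(\tau\circ u)\restr A$ is equivalent, by injectivity of $\tau$, to $u\circ\iota=u\restr A$, and the $u$-preserving automorphism supplied by homogeneity of $u$ also preserves $\tau\circ u$. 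The uniqueness clause of Theorem~\ref{hom-constraints} (which rests only on the $\aleph_0$-algebroidality of the comma category appearing in its proof, not on the strictness and freeness hypotheses) then yields an isomorphism $h_\tau\in\Aut(\bU)$ with $\tau\circ u=u\circ h_\tau$.

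With this in hand I would attach to each $\ba\in U^n$ the invariant consisting of (a) the quantifier-free type of $\ba$ in $\bU$, equivalently the isomorphism type of $(\langle\ba\rangle,\ba)$, and (b) the $\Aut(\bT)$-orbit of the tuple $\big(u(a_1'),\dots,u(a_m')\big)$, where $a_1',\dots,a_m'$ is the enumeration of the (at most $\chi(n)$) elements of $\langle\ba\rangle$ read off canonically from the quantifier-free type. By the first paragraph there are finitely many choices for (a), and for each of them $m\le\chi(n)$ is fixed, so oligomorphy of $\Aut(\bT)$ leaves only finitely many choices for (b). Hence the invariant takes only finitely many values on $U^n$, and it remains to show that it is a \emph{complete} invariant for $\Aut(\bU)$-orbits.

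So suppose $\ba,\bb\in U^n$ share this invariant. Part (a) gives an isomorphism $\phi\colon\langle\ba\rangle\to\langle\bb\rangle$ with $\phi(\ba)=\bb$ and $\phi(a_i')=b_i'$, and part (b) gives $\tau\in\Aut(\bT)$ with $\tau\circ(u\restr\langle\ba\rangle)=(u\restr\langle\bb\rangle)\circ\phi$. Using the lift $h_\tau$, from $u\circ h_\tau=\tau\circ u$ one gets $u\circ h_\tau^{-1}=\tau^{-1}\circ u$, and I would compute that the embedding $\iota:=h_\tau^{-1}\circ\phi\colon\langle\ba\rangle\injto\bU$ satisfies $u\circ\iota=\tau^{-1}\circ(u\restr\langle\bb\rangle)\circ\phi=u\restr\langle\ba\rangle$. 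Homogeneity of $u$ then produces $\alpha\in\Aut(\bU)$ with $u\circ\alpha=u$ and $\alpha\restr\langle\ba\rangle=\iota$, and $g:=h_\tau\circ\alpha\in\Aut(\bU)$ restricts to $h_\tau\circ\iota=\phi$ on $\langle\ba\rangle$, so $g(\ba)=\bb$. This shows that equal invariants force a common orbit, completing the count. The main obstacle is the lifting lemma of the second paragraph: the rest is bookkeeping, but oligomorphy of $\Aut(\bU)$ genuinely needs the action of $\Aut(\bT)$ to be reflected inside $\Aut(\bU)$, and that reflection is exactly what uniqueness of universal homogeneous homomorphisms provides.
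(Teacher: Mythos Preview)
Your proof is correct and close in spirit to the paper's, but the decomposition is genuinely different. The paper introduces the notion of \emph{weak homogeneity} of $u$ (every embedding $\iota:\bA\injto\bU$ of a finitely generated substructure satisfying $u\circ\iota=g\circ u\restr A$ for some $g\in\Aut(\bT)$ extends to an automorphism of $\bU$ that weakly preserves $u$) and proves it by passing to enlarged comma categories $(F\downarrow G_H)$, where $\fB$ is replaced by the one-object category with morphism set a countable subgroup $H\le\Aut(\bT)$; one then argues that $(\bU,u,\bT)$ remains universal homogeneous in this larger category. Your lifting lemma is instead proved directly from the uniqueness of universal homogeneous homomorphisms, and you recover the effect of weak homogeneity by composing the lift $h_\tau$ with the ordinary $u$-homogeneity automorphism. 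Your route avoids introducing the auxiliary categories $(F\downarrow G_H)$ and is arguably more transparent; the paper's route isolates weak homogeneity as a reusable property. The two are equivalent: your lifting lemma plus homogeneity yields weak homogeneity, and conversely weak homogeneity applied to the identity embedding yields your lift. Your parenthetical remark that the uniqueness clause requires only $\aleph_0$-algebroidality of the comma category (and not the strictness or freeness hypotheses of Theorem~\ref{hom-constraints}) is correct and worth stating, since Proposition~\ref{oligomorphic} does not assume those hypotheses.
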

For the proof of this proposition we need some preparation. 
Let $\hat\bU\in\overline{\cC}$ and let $\hat{u}:\hat\bU\to\bT$. We say that $\hat{u}$ is \emph{weakly universal within $\overline{\cC}$} if for all $\bA\in\overline{\cC}$ and for all $a:\bA\to\bT$ there exists an embedding $\iota:\bA\injto\hat\bU$ and some $g\in\Aut(\bT)$, such that $\hat{u}\circ \iota= g\circ a$. 

Let now $\bA\le\hat\bU$ and let $\iota:\bA\injto\hat\bU$ be an embedding. Then we say that $\iota$ \emph{weakly preserves $\hat{u}$} if for some $g\in\Aut(\bT)$ we have that $\hat{u}\circ \iota=g\circ \hat{u}$. We call $\hat{u}$ \emph{weakly homogeneous} if for all finitely generated substructures $\bA$ of $\hat\bU$ and for all weakly $\hat{u}$ preserving embeddings $\iota:\bA\injto\hat\bU$ there exists an automorphism $h$ of $\hat\bU$ that weakly preserves $\hat{u}$ and that extends $\iota$. 
\begin{lemma}\label{weakhom}
	With the notions from above, let $u:\bU\to\bT$ be a  universal homogeneous homomorphism within $\overline{\cC}$. Then $u$ is also weakly universal within $\overline{\cC}$ and weakly homogeneous. 
\end{lemma}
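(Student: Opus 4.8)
The plan is to deduce both weak properties from the corresponding strong ones, the only genuine work being to show that $\Aut(\bT)$ can be lifted along $u$ to automorphisms of $\bU$. Weak universality is immediate: given $\bA\in\overline{\cC}$ and a homomorphism $a:\bA\to\bT$, universality of $u$ yields an embedding $\iota:\bA\injto\bU$ with $u\circ\iota = a$, and taking $g=1_\bT$ gives $u\circ\iota = g\circ a$, which is exactly weak universality.

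For weak homogeneity the key observation is that, for every $g\in\Aut(\bT)$, the map $g\circ u:\bU\to\bT$ is again a universal homogeneous homomorphism within $\overline{\cC}$. Indeed, for universality note that a homomorphism $h:\bA\to\bT$ factors as $h=(g\circ u)\circ\iota$ if and only if $g^{-1}\circ h = u\circ\iota$, and the latter factorization exists by universality of $u$ since $g^{-1}\circ h$ is again a homomorphism into $\bT$; homogeneity transfers in the same way, because $g$, being an automorphism, may be cancelled on both sides of the preservation condition. Applying the uniqueness clause of Theorem~\ref{hom-constraints} to the two universal homogeneous homomorphisms $u$ and $g\circ u$ (both with codomain $\bT$ and domain $\bU$) produces an automorphism $\hat g$ of $\bU$ with $u\circ\hat g = g\circ u$. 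Thus every symmetry of the codomain is realized by a $u$-compatible symmetry of the domain.

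With this lift in hand, weak homogeneity follows by untwisting. Let $\bA\le\bU$ be finitely generated and let $\iota:\bA\injto\bU$ weakly preserve $u$, say $u\circ\iota = g\circ(u\restr_A)$ with $g\in\Aut(\bT)$. Choose $\hat g\in\Aut(\bU)$ with $u\circ\hat g = g\circ u$ (hence $u\circ\hat g^{-1} = g^{-1}\circ u$) and set $\iota' := \hat g^{-1}\circ\iota$. Then $u\circ\iota' = g^{-1}\circ u\circ\iota = g^{-1}\circ g\circ(u\restr_A) = u\restr_A$, so $\iota'$ is a (strongly) $u$-preserving embedding. Strong homogeneity of $u$ now furnishes $\alpha\in\Aut(\bU)$ with $u\circ\alpha = u$ and $\alpha\restr_A = \iota'$. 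Setting $h:=\hat g\circ\alpha$ we obtain $h\restr_A = \hat g\circ\iota' = \iota$ and $u\circ h = u\circ\hat g\circ\alpha = g\circ u\circ\alpha = g\circ u$, so $h$ weakly preserves $u$ and extends $\iota$, as required.

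I expect the only delicate point to be the lifting step of the second paragraph: verifying that $g\circ u$ remains universal homogeneous and then using uniqueness to realize $g$ inside $\Aut(\bU)$. Once that is established, everything else is a routine conjugation argument, and the interplay between the codomain twist $g$ and its domain lift $\hat g$ cancels exactly as needed.
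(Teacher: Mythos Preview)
Your proof is correct and takes a genuinely different route from the paper's. The paper proceeds by enlarging the codomain category: for each countable subgroup $H\le\Aut(\bT)$ it forms the comma category $(F\downarrow G_H)$ with $\fB_H$ having morphism set $H$, verifies that this category still satisfies the hypotheses of Theorem~\ref{mainconstruction}, produces its universal homogeneous object $(\bU_H,u_H,\bT)$, and then shows this object is also universal homogeneous in the smaller category $(F\downarrow G)$ and hence isomorphic to $(\bU,u,\bT)$; weak homogeneity then falls out of $(F\downarrow G_H)$-homogeneity. Your approach avoids the enlarged category entirely: you observe directly that $g\circ u$ is again universal homogeneous, lift $g$ to $\hat g\in\Aut(\bU)$ via uniqueness, and reduce to ordinary homogeneity by conjugation. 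This is shorter and more transparent; the paper's detour through $(F\downarrow G_H)$ establishes a slightly stronger intermediate statement (that $(\bU,u,\bT)$ is universal homogeneous in all of the $(F\downarrow G_H)$), but this is not used elsewhere.

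One small point of hygiene: you invoke ``the uniqueness clause of Theorem~\ref{hom-constraints}'', but that theorem carries the extra hypotheses that $\cU$ be a strict \Fraisse-class and $\cC$ be free in $\cU$, which are not assumed in the context of Lemma~\ref{weakhom} (the ambient setting is that of Proposition~\ref{oligomorphic}, where $\cU$ is merely an age). The uniqueness you need, however, is really the uniqueness of universal homogeneous objects in $\aleph_0$-algebroidal categories from Theorem~\ref{DroGoe}; since $(F\downarrow G)$ is $\aleph_0$-algebroidal here (conditions (1)--(5) of Theorem~\ref{mainconstruction} hold for the identical embeddings $F,G$ regardless of the strictness of $\cU$), the citation should be to Theorem~\ref{DroGoe}. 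With that adjustment, your argument goes through unchanged.
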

\begin{proof}
	Clearly, from universality follows weak universality. 
	
	In the following we show the weak homogeneity of $u$. Consider the categories $\fA,\fB,\fC$ and the functors $F,G$ from the proof of Theorem~\ref{hom-constraints}. Let $H$ be a countable subgroup of $\Aut(\bT)$. Define $\fB_H$ to be the category that has only one object $\bT$ and whose morphisms are the elements of $H$. Let $G_H:\fB_H\to\fC$ be the identical embedding. Then  $F$ and $G_H$ fulfill the conditions (1)--(5) of Theorem~\ref{mainconstruction}. The $(F\restr_{\fA_{<\aleph_0}},G_H\restr_{(\fB_H)_{<\aleph_0}})$-joint embedding property and the $(F\restr_{\fA_{<\aleph_0}},G_H\restr_{(\fB_H)_{<\aleph_0}})$-amalgamation property of $\fA$ follow directly from the $(F\restr_{\fA_{<\aleph_0}},G\restr_{\fB_{<\aleph_0}})$-joint embedding property and the $(F\restr_{\fA_{<\aleph_0}},G\restr_{\fB_{<\aleph_0}})$-amalgamation property of $\fA$, respectively. So all of the conditions of Theorem~\ref{mainconstruction} are fulfilled and we have that there exists an $(F\downarrow G_H)$-universal $(F\downarrow G_H)_{<\aleph_0}$-homogeneous object $(\bU_H,u_H,\bT)$ in $(F\downarrow G_H)$. Moreover, up to isomorphism in $(F\downarrow G_H)$ there is just one such object. Note that $(F\downarrow G)$ is a subcategory of $(F\downarrow G_H)$ and object-wise the two categories are indistinguishable. 
	
	We will show that $(\bU_H,u_H,\bT)$ is an $(F\downarrow G)$-universal and $(F\downarrow G)_{<\aleph_0}$-homogeneous object. 
	
	By assumption, we have that $(\bU,u,\bT)$ is an $(F\downarrow G)$-universal, $(F\downarrow G)_{<\aleph_0}$-homogeneous object. Since $(\bU_H,u_H,\bT)$ is $(F\downarrow G_H)$-universal, there exists an $(F\downarrow G_H)$-morphism $(f,g):(\bU,u,\bT)\to(\bU_H,u_H,\bT)$. Let $(\bA,a,\bT)\in(F\downarrow G)$. Then there exists an $(F\downarrow G)$-morphism $(\iota,1_T):(\bA,g^{-1}\circ a,\bT)\to(\bU,u,\bT)$. However, then $(f\circ\iota,1_\bT):(\bA,a,\bT)\to(\bU_H,u_H,\bT)$ in $(F\downarrow G)$. This shows that $(\bU_H,u_H,\bT)$ is $(F\downarrow G)$-universal. 
	
	Let  now $(\bA,a,\bT)\in(F\downarrow G)_{<\aleph_0}$, $(f_1,1_\bT),(f_2,1_\bT):(\bA,a,\bT)\to(\bU_H,u_H,\bT)$. Since $(\bU_H,u_H,\bT)$ is $(F\downarrow G_H)_{<\aleph_0}$-homogeneous, there exists an $(F\downarrow G_H)$-auto\-morphism $(h,g)$ of $(\bU_H,u_H,\bT)$ such that $(h,g)\circ(f_1,1_\bT)=(f_2,1_\bT)$. In particular $g\circ 1_\bT=1_\bT$. It follows that $g=1_\bT$. This shows that $(\bU_H,u_H,\bT)$ is $(F\downarrow G)_{<\aleph_0}$-homogeneous. From Theorem~\ref{mainconstruction} it follows that $(\bU,u,\bT)$ and $(\bU_H,u_H,\bT)$ are isomorphic in $(F\downarrow G)$, and hence also in $(F\downarrow G_H)$. In particular, it follows that $(\bU,u,\bT)$ is $(F\downarrow G_H)$-universal and $(F\downarrow G_H)_{<\aleph_0}$-homogeneous. 
	
	Now we are ready to show that $u$ is weakly homogeneous: Let $\bA\le\bU$ be finitely generated, let $f:\bA\injto\bU$ be an embedding that weakly preserves $u$. This means that there is some $g\in\Aut(\bT)$ such that  $u\circ f=g\circ u$. Let $H$ be the subgroup of $\Aut(\bT)$ that is generated by $g$. Then $H$ is countable. Let $\iota:\bA\injto\bU$ be the identical embedding, and let $a$ be the restriction of $u$ to $A$. Then $(\bA,a,\bT)\in(F\downarrow G_H)_{<\aleph_0}$, and $(\iota,1_\bT),(f,g):(\bA,a,\bT)\to(\bU,u,\bT)$ are morphisms in $(F\downarrow G_H)$. Since $(\bU,u,\bT)$ is $(F\downarrow G_H)_{<\aleph_0}$-homogeneous, it follows that there is an automorphism $(h,\hat{g})$ of $(\bU,u,\bT)$ in $(F\downarrow G_H)$, such that $(h,\hat{g})\circ(\iota,1_\bT)=(f,g)$. In particular we obtain $\hat{g}=g$ and $h\circ\iota=f$. That means that $h$ extends $f$ and that $u\circ h=g\circ u$. It follows that $u$ is weakly homogeneous.   
\end{proof}

\begin{proof}[Proof of Proposition~\ref{oligomorphic}]
	Since the \Fraisse-limit $\bF$ of $\cC$ is locally finite and since its automorphism group is oligomorphic, we have that $\bF$ is uniformly locally finite. Let $\chi:\aleph_0\to\aleph_0$ such that the size of every $n$-generated structure from $\cC$ is less than or equal $\chi(n)$.
	
	Let $\ba=(a_1,\dots,a_n)$, and $\bb=(b_1,\dots,b_n)$ be tuples of elements from $U$. Let $\bA$ and $\bB$ be the substructures of $\bU$ generated by the entries of $\ba$ and $\bb$, respectively. Suppose, the mapping $a_i\mapsto b_i$ induces an isomorphism $f:\bA\to\bB$, and suppose further that there exists a $g\in \Aut(\bT)$ such that we have that $g\circ u= u\circ f$.    
	
	Let $a:\bA\to\bT$ be the restriction of $u$ to $\bA$. Define $\iota:\bA\to\bU$ by $a_i\mapsto b_i$. Then, by the assumptions on $\ba$ and $\bb$ we have that $\iota$ is well-defined and weakly preserves $u$. By Lemma~\ref{weakhom}, it follows that $u$ is weakly homogeneous. Hence there exists an automorphism $\hat{f}$ of $\bU$ that weakly preserves $u$ and that extends $\iota$. In particular, $\hat{f}(\ba)=\bb$. Since there are only finitely many isomorphism types of $n$-tuples in $\bU$, and since there are only finitely many orbits of $\chi(n)$-tuples in $\bT$ under $\Aut(\bT)$, it follows that $\Aut(\bU)$ has only finitely many $n$-orbits. Hence, $\Aut(\bU)$ is oligomorphic.
\end{proof}

\begin{example}
	Consider the class $\cG_k$ of graphs whose chromatic number is bounded from above by $k$. Then this class coincides with the class of all those graphs that have a homomorphism to the complete graph $K_k$. The class of all graphs is a free amalgamation class, and hence it is also a strict \Fraisse-class. Taking $\cU=\cC$ to be the class of all finite graphs, from Theorem~\ref{hom-constraints} it follows that there exists a universal homogeneous  homomorphism $u:\bU\to\bT$. Hence $\bU$ is universal in $\cG_k$. 
	
	The \Fraisse-limit of $\cC$ is the Rado-graph, and we know that this graph is $\aleph_0$-categorical. The complete graph $K_n$ is finite. Hence, by Proposition~\ref{oligomorphic}, we have that $\bU$ has an oligomorphic automorphism group. Clearly, $\bU$ is not finite, so it has to be $\aleph_0$-categorical.

	These arguments function in the same way if we replace $K_n$ by any countable graph $H$ that has an oligomorphic automorphism group. Thus, from the Ryll-Nardzewski-Theorem, we obtain that there is an $\aleph_0$-categorical countable universal $H$-colorable graph.
\end{example}

\begin{example}
	Let $\bT$ be a finite or $\aleph_0$-categorical relational structure, and let $\HE(\bT)$ be the class of all countable structures of the same type like $\bT$ that are homomorphism-equivalent with $\bT$. By Theorem~\ref{hom-constraints}, there exists a universal homogeneous  homomorphism $u:\bU\to\bT$ within the class of all countable structures of the same type like $\bT$. We already observed, that $\bU$ is a universal element in $\HE(\bT)$. In this particular case, from Proposition~\ref{oligomorphic} and from the Ryll-Nardzewski-Theorem we obtain, that $\bU$ is $\aleph_0$-categorical.
\end{example}

\begin{example}
	A directed acyclic graph (or DAG, for short) is a simple digraph that contains no directed cycles (including loops and undirected edges). The transitive closure  of a DAG is a poset. In particular, the rationals with their strict order can be considered as a DAG and a simple digraph is a DAG if and only if it has a homomorphism into $(\bQ,<)$.  

	It is not hard to see that the class of all finite simple digraphs is a strict \Fraisse-class. We take it as the class $\cU$, and we define $\cC:=\cU$. Finally let us define $\bT:=(\bQ,<)$. By Theorem~\ref{hom-constraints}, there is an universal homogeneous  homomorphism $u:\bU\to\bT$ within the class of countable simple directed graphs. Hence $\bU$ is a universal object in the class of all countable DAGs. The \Fraisse-limit of the class of finite simple directed graphs is $\aleph_0$-categorical, and so is $(\bQ,<)$. Hence, by Proposition~\ref{oligomorphic} and by the Ryll-Nardzewski-Theorem, $\bU$ is $\aleph_0$-categorical, too. 
\end{example}

\section{Retracts of homogeneous structures}\label{s5}

Recall that a homomorphism $r:\bA\to\bB$ is called a \emph{retraction} if there exists a homomorphism $\iota:\bB\to\bA$ such that $r\circ\iota$ is the identity homomorphism of $\bB$. Clearly, $\iota$ must be an embedding, and $r$ must be surjective. If $\bB$ is actually a substructure of $\bA$, then we call $\bB$ a \emph{retract} of $\bA$. Equivalently we can say that $\bB$ is a retract of $\bA$ if and only if there is an idempotent endomorphism $h$ of $\bA$ whose image is $B$. 

In this section we will be interested in such retractions that are at the same time universal homogeneous homomorphisms:
\begin{definition}
	Let $\bA$ be a structure and let $\bB\in\overline{\Age(\bA)}$. A retraction $r:\bA\to\bB$ is called a \emph{universal homogeneous retraction} if it is a  universal homogeneous homomorphism  to $\bB$ within $\overline{\Age(\bA)}$ (in the sense of Definition~\ref{univhom}).
\end{definition}
We are going to use \Fraisse-limits in comma-categories in order to derive a characterization of retracts of homogeneous structures that are induced by universal homogeneous retractions and their subretracts.  Moreover, we will characterize all homogeneous structures that have the property the every retract is induced by a universal homogeneous retraction. Our results are related to \cite{Dol12} and \cite{Kub13}.

\begin{theorem}\label{uhretract}
	Let $\cC$ be a \Fraisse-class with \Fraisse-limit $\bU$, and let $\bT\in\overline\cC$. Then there exists a universal homogeneous retraction $r:\bU\epito\bT$ if and only if
	\begin{enumerate}
		\item\label{cond1} for all $\bA,\bB_1,\bB_2\in\cC$, $f_1:\bA\injto\bB_1$, $f_2:\bA\injto\bB_2$, $h_1:\bB_1\to\bT$, $h_2:\bB_2\to\bT$, if $h_1\circ f_1=h_2\circ f_2$, then there exists $\bC\in\cC$, $g_1:\bB_1\injto\bC$, $g_2:\bB_2\injto\bC$, $h:\bC\to\bT$, such that the following diagram commutes:
		\[
		\begin{psmatrix}
		& & [name=T] \bT\\
		[name=B1]\bB_1 & [name=C]\bC\\
		[name=A]\bA & [name=B2]\bB_2
		\ncline{H->}{A}{B1}<{f_1}
		\ncline{H->}{A}{B2}^{f_2}
		\ncline[linestyle=dashed]{H->}{B1}{C}_{g_1}
		\ncline[linestyle=dashed]{H->}{B2}{C}<{g_2}
		\ncarc[arcangle=25]{->}{B1}{T}^{h_1}
		\ncarc[arcangle=-25]{->}{B2}{T}>{h_2}
		\ncline[linestyle=dashed]{->}{C}{T}^{h}
		\end{psmatrix}
		\]
		\item\label{cond2} for all $\bA,\bB\in\cC$, $\iota:\bA\injto\bB$, $h:\bA\to\bT$ there exists $\hat{h}:\bB\to\bT$ such that the following diagram commutes:
		\[
		\begin{psmatrix}
		[name=A]\bA & [name=T]\bT\\
		[name=B]\bB
		\ncline{H->}{A}{B}<{\iota}
		\ncline{->}{A}{T}^{h}
		\ncline[linestyle=dashed]{->}{B}{T}_{\hat{h}}
		\end{psmatrix}
		\]
	\end{enumerate}
\end{theorem}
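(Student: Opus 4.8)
The plan is to instantiate the comma-category machinery of Section~\ref{s6} exactly as in the proof of Theorem~\ref{hom-constraints}. I set $\fC:=(\overline{\cC},\to)$ and $\fA:=(\overline{\cC},\injto)$, and I let $\fB$ be the trivial category with the single object $\bT$ and only the identity morphism $1_\bT$; the functors $F:\fA\to\fC$ and $G:\fB\to\fC$ are the identical embeddings. Then $\fA$ and $\fB$ are $\aleph_0$-algebroidal with all morphisms monic, $\fB_{<\aleph_0}=\fB$ trivially has the (JEP) and the (AP), and conditions~(1)--(5) of Theorem~\ref{mainconstruction} (as well as the faithfulness of $F$ needed for Proposition~\ref{AFsat}) are fulfilled just as there. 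Here $\fA_{<\aleph_0}$ is induced by $\cC$, and the $\aleph_0$-small objects of $(F\downarrow G)$ are the triples $(\bA,a,\bT)$ with $\bA\in\cC$ and $a:\bA\to\bT$.

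The decisive point is that, because $\fB$ carries only $1_\bT$, the abstract hypotheses of these theorems specialize exactly to conditions~\eqref{cond1} and~\eqref{cond2}. Indeed, taking $T=T'=\bT$ and $k=1_\bT$, the $(F\restr_{\fA_{<\aleph_0}},G\restr_{\fB_{<\aleph_0}})$-amalgamation property of $\fA_{<\aleph_0}$ is verbatim condition~\eqref{cond1}; and taking $T_1=T_2=\bT$ and $h=1_\bT$, the mixed amalgamation property of $F\restr_{\fA_{<\aleph_0}}$ and $G\restr_{\fB_{<\aleph_0}}$ is verbatim condition~\eqref{cond2}. Theorem~\ref{mainconstruction} also requires the $(F\restr_{\fA_{<\aleph_0}},G\restr_{\fB_{<\aleph_0}})$-joint embedding property, which is not listed among~\eqref{cond1}--\eqref{cond2}; I would obtain it from~\eqref{cond1} by feeding in the initial object $\mathbf{0}$ of $\fA$ (the structure generated by the empty set, already used in the proof of Theorem~\ref{hom-constraints}): given $\bB_1,\bB_2\in\cC$ and $h_1,h_2$ into $\bT$, apply~\eqref{cond1} to $\mathbf{0}\injto\bB_1$ and $\mathbf{0}\injto\bB_2$, whose compatibility hypothesis $h_1\circ f_1=h_2\circ f_2$ is automatic since $\mathbf{0}\to\bT$ is unique.

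For the implication $(\Leftarrow)$ I would then argue: by the previous paragraph, conditions~\eqref{cond1} and~\eqref{cond2} give the $(F,G)$-joint embedding and $(F,G)$-amalgamation properties, so Theorem~\ref{mainconstruction} produces an $(F\downarrow G)$-universal, $(F\downarrow G)_{<\aleph_0}$-homogeneous object $(\bU',u,\bT)$, that is, a universal homogeneous homomorphism $u:\bU'\to\bT$ within $\overline{\cC}$. The essential step is to identify $\bU'$ with the prescribed \Fraisse-limit $\bU$. Since~\eqref{cond2} is exactly the mixed amalgamation property, Proposition~\ref{AFsat} forces $\bU'$ to be $\fA_{<\aleph_0}$-saturated, hence weakly homogeneous; as $\bU'$ is countable and saturation (together with universality for the initial object) embeds every member of $\cC$ into it, its age equals $\cC$, and a countable weakly homogeneous structure of age $\cC$ is the \Fraisse-limit, so $\bU'\cong\bU$. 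Finally, $\bT\in\overline{\cC}$ together with Lemma~\ref{retract} shows that $u$ is a retraction, so $u$ is the desired universal homogeneous retraction $r:\bU\epito\bT$.

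For the converse $(\Rightarrow)$, a universal homogeneous retraction $r:\bU\epito\bT$ is by definition a universal homogeneous homomorphism within $\overline{\Age(\bU)}=\overline{\cC}$, so $(\bU,r,\bT)$ is an $(F\downarrow G)$-universal, $(F\downarrow G)_{<\aleph_0}$-homogeneous object. The ``only if'' direction of Theorem~\ref{mainconstruction} then yields the $(F,G)$-amalgamation property, i.e.\ condition~\eqref{cond1}. Moreover the existence of this object gives, via Theorem~\ref{DroGoe}, the (JEP) and (AP) for $(F\downarrow G)_{<\aleph_0}$, so Proposition~\ref{AFsat} applies; since $\bU$ is the \Fraisse-limit it is weakly homogeneous, hence $\fA_{<\aleph_0}$-saturated, and the proposition returns the mixed amalgamation property, i.e.\ condition~\eqref{cond2}. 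I expect the main obstacle to be precisely the identification step in $(\Leftarrow)$: the comma-category construction only yields \emph{some} domain $\bU'$, and it is condition~\eqref{cond2}, through the saturation criterion of Proposition~\ref{AFsat}, that pins $\bU'$ down to the given \Fraisse-limit $\bU$.
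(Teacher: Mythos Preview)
Your proposal is correct and follows essentially the same route as the paper's own proof: the same instantiation of $\fA$, $\fB$, $\fC$, $F$, $G$, the same identification of condition~\eqref{cond1} with the $(F,G)$-amalgamation property and of condition~\eqref{cond2} with the mixed amalgamation property, the same use of the initial object to extract the $(F,G)$-joint embedding property from~\eqref{cond1}, and the same appeal to Proposition~\ref{AFsat} to identify the domain with the \Fraisse-limit via saturation. The only cosmetic difference is how you argue $\Age(\bU')=\cC$ (you use saturation over the initial object, the paper invokes the initial object of $\fC$ directly); both are fine.
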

\begin{proof}
	Let $\fA=(\overline\cC,\injto)$, $\fC=(\overline\cC,\to)$, and let $\fB$ be the subcategory of $\fC$ that consists only of the object $\bT$ and the identity morphism $1_\bU$. Let $F:\fA\to\fC$ and $G:\fB\to\fC$ be identical embeddings. It is easy to check that conditions (1)--(5) of Theorem~\ref{mainconstruction} are fulfilled. Moreover, $F$ is obviously faithful.
	In particular, by Proposition~\ref{algebroidal}, $(F\downarrow G)$ is $\aleph_0$-algebroidal. Moreover, all morphisms of $(F\downarrow G)$ are monos.
	
	``$\Rightarrow$'' If $r:\bU\epito\bT$ is a universal homogeneous retraction, then $(\bU,r,\bT)$ is a $(F\downarrow G)$-universal $(F\downarrow G)_{<\aleph_0}$-homogeneous object in $(F\downarrow G)$. In particular, from Theorem~\ref{DroGoe} it follows that $(F\downarrow G)_{<\aleph_0}$ has the joint embedding property and the amalgamation property. Moreover,  by Theorem~\ref{mainconstruction}, we have that $\fA$ has the $(F\restr_{\fA_{<\aleph_0}},G\restr_{\fB_{<\aleph_0}})$-amalgamation property. From this condition (1) follows. Since $\bU$ is $\fA$-universal and $\fA_{<\aleph_0}$-homogeneous, it follows from Proposition~\ref{AFsat}, that $F$ and $G$ have the mixed amalgamation property. However, this implies condition (2).
	
	``$\Leftarrow$'' Condition (1) implies that $\fA$ has the $(F\restr_{\fA_{<\aleph_0}},G\restr_{\fB_{<\aleph_0}})$-amalgamation property. Since $\cC$ is the age of a structure, it follows that $\fA$ has an initial object (the substructure generated by $\emptyset$). Moreover, $F$ maps the initial structure of $\fA$ to the initial object  of $\fC$. Hence,  from the $(F\restr_{\fA_{<\aleph_0}},G\restr_{\fB_{<\aleph_0}})$-amalgamation property follows the $(F\restr_{\fA_{<\aleph_0}},G\restr_{\fB_{<\aleph_0}})$-joint embedding property. By Theorem~\ref{mainconstruction} we have that $(F\downarrow G)$  has an    $(F\downarrow G)$-universal, $(F\downarrow G)_{<\aleph_0}$-homogeneous object $(\hat\bU,r,\bT)$. 
	
	From condition (2) it follows that $F$ and $G$ have the mixed amalgamation property.  Moreover, since $\fC$ has an initial object, it follows that $\Age(\hat\bU)=\cC$. From Proposition~\ref{AFsat}, it follows that $\hat\bU$ is homogeneous. From \Fraisse's theorem we get that $\hat\bU$ is isomorphic with $\bU$, so without loss of generality, we can assume $\hat\bU=\bU$. But then, by Lemma~\ref{retract},  $r$ is the wanted universal homogeneous retraction.
\end{proof}

In the following we will need the notion of the amalgamated extension property that was defined by Kubi\'s \cite{Kub13}:
\begin{definition}
	Let $\cC$ be a class of countable, finitely generated structures. We say that $\cC$  has the \emph{amalgamated extension property} if for all $\bA,\bB_1,\bB_2,\bT\in\cC$, $f_1:\bA\injto\bB_2$, $f_2:\bA\injto\bB_2$, $h_1:\bB_1\to\bT$, $h_2:\bB_2\to\bT$, if $h_1\circ f_1=h_2\circ f_2$, then there exists $\bC,\bT'\in\cC$, $g_1:\bB_1\injto\bC$, $g_2:\bB_2\injto\bC$, $h:\bC\to\bT'$, $k:\bT\injto\bT'$ such that the following diagram commutes:
	\[
	\begin{psmatrix}
	& & & [name=GTp] \bT'\\
	& & [name=GT] \bT\\
	[name=FB1] \bB_1 & [name=FC] \bC\\
	[name=FA] \bA & [name=FB2] \bB_2
	\ncline{H->}{FA}{FB1}<{f_1}
	\ncline{H->}{FA}{FB2}^{f_2}
	\ncline[linestyle=dashed]{H->}{FB1}{FC}^{g_1}
	\ncline[linestyle=dashed]{H->}{FB2}{FC}>{g_2}
	\ncarc[arcangle=25]{->}{FB1}{GT}^{h_1}
	\ncarc[arcangle=-25]{->}{FB2}{GT}>{h_2}
	\ncarc[linestyle=dashed,arcangle=30]{->}{FC}{GTp}^{h}
	\ncline[linestyle=dashed]{H->}{GT}{GTp}_{k}
	\end{psmatrix}
	\]
\end{definition}
Note that every strict \Fraisse-class has the amalgamated extension property. However, the opposite is not true. The following \Fraisse-classes have the amalgamated extension property, but fail to be strict \Fraisse-classes (see \cite{Kub13} for a detailed explanation):
\begin{itemize}
	\item the class of finite chains,
	\item the class of finite metric spaces with rational distances,
	\item the class of finite metric spaces with rational distances $\le 1$. 
\end{itemize}

\begin{lemma}[cf. {\cite[Lem.2.5]{Kub13}}]\label{amalgextred}
	With the notions from above, if $\cC$ has the amalgamated extension property, then condition \eqref{cond1} of Theorem~\ref{uhretract} follows from condition \eqref{cond2}.
\end{lemma}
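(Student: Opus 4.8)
The plan is to reduce condition~\eqref{cond1}, whose target $\bT$ lies in the possibly infinite class $\overline{\cC}$, to the amalgamated extension property, whose target is required to be finitely generated, and then to repair the resulting enlargement of the target by means of condition~\eqref{cond2}. The whole argument is essentially a single diagram chase once the targets are matched up correctly.

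First I would fix the data of condition~\eqref{cond1}: structures $\bA,\bB_1,\bB_2\in\cC$, embeddings $f_1:\bA\injto\bB_1$, $f_2:\bA\injto\bB_2$, and homomorphisms $h_1:\bB_1\to\bT$, $h_2:\bB_2\to\bT$ with $h_1\circ f_1=h_2\circ f_2$, where $\bT\in\overline{\cC}$. Since $\bB_1,\bB_2$ are finitely generated, the substructure $\bT_0:=\langle h_1(B_1)\cup h_2(B_2)\rangle$ of $\bT$ is finitely generated, hence $\bT_0\in\Age(\bT)\subseteq\cC$. Write $j:\bT_0\injto\bT$ for the inclusion and let $h_i':\bB_i\to\bT_0$ be the corestrictions of the $h_i$, so that $j\circ h_i'=h_i$. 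As $j$ is monic, $h_1'\circ f_1=h_2'\circ f_2$, and all data now live in $\cC$.

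Next I would invoke the amalgamated extension property for $\bA,\bB_1,\bB_2,\bT_0\in\cC$ together with $f_1,f_2,h_1',h_2'$. This produces $\bC,\bT'\in\cC$, embeddings $g_1:\bB_1\injto\bC$, $g_2:\bB_2\injto\bC$, a homomorphism $h:\bC\to\bT'$, and an embedding $k:\bT_0\injto\bT'$ satisfying $g_1\circ f_1=g_2\circ f_2$, $h\circ g_1=k\circ h_1'$, and $h\circ g_2=k\circ h_2'$. It remains to discard the enlargement $k:\bT_0\injto\bT'$ and return to $\bT$. This is exactly where condition~\eqref{cond2} is used: applying it with $\bA:=\bT_0$, $\bB:=\bT'$, the embedding $\iota:=k$, and the homomorphism $j:\bT_0\to\bT$ yields $\hat{j}:\bT'\to\bT$ with $\hat{j}\circ k=j$. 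Setting $\bar{h}:=\hat{j}\circ h:\bC\to\bT$, one computes $\bar{h}\circ g_1=\hat{j}\circ k\circ h_1'=j\circ h_1'=h_1$ and, symmetrically, $\bar{h}\circ g_2=h_2$, while $g_1\circ f_1=g_2\circ f_2$ already holds. Thus $\bC$, $g_1$, $g_2$, $\bar{h}$ witness condition~\eqref{cond1}.

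The only genuine obstacle is the mismatch between the finitely generated target demanded by the amalgamated extension property and the arbitrary $\bT\in\overline{\cC}$ appearing in condition~\eqref{cond1}; this is precisely what the passage to the finitely generated $\bT_0$ and the subsequent application of condition~\eqref{cond2} resolve. Everything else reduces to the routine observation that $j$ and $k$ are monic, so the relevant equalities may be checked after composing with them.
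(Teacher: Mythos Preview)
Your proof is correct and follows essentially the same route as the paper's: restrict the target to a finitely generated substructure, apply the amalgamated extension property there, and then use condition~\eqref{cond2} to extend the inclusion of that substructure along the resulting embedding $k$, composing to obtain the required map into $\bT$. The only cosmetic differences are in notation (your $\bT_0,j,\hat{j},\bar h$ versus the paper's $\bT',\iota,l,l\circ h'$).
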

\begin{proof}
	Let $\bA,\bB_1,\bB_2\in\cC$, $f_1:\bA\injto\bB_1$, $f_2:\bA\injto\bB_2$, $h_1:\bB_1\to\bT$, $h_2:\bB_2\to\bT$, such that $h_1\circ f_1=h_2\circ f_2$. Since $\bB_1$ and $\bB_2$ are finitely generated, there exists a finitely generated substructure $\bT'$  of $\bT$ such that $h_1$ and $h_2$ factor over the identical embedding $\iota:\bT'\injto\bT$. That is, there exist $h'_1:\bB_1\to\bT'$ and $h'_2:\bB_2\to\bT'$ such that $h_1=\iota\circ h'_1$ and $h_2=\iota\circ h'_2$. But then (since $\iota$ is a monomorphism) $h'_1\circ f_1=h'_2\circ f_2$. Since $\cC$ has the amalgamated extension property, there exists $\bC,\bT''\in\cC$, $g_1:\bB_1\injto\bC$, $g_2:\bB_2\injto\bC$, $h':\bC\to\bT''$, $k:\bT'\injto\bT''$ such that the following diagram commutes:
	\[
	\begin{psmatrix}
	& & & [name=GTp] \bT''\\
	& & [name=GT] \bT'\\
	[name=FB1] \bB_1 & [name=FC] \bC\\
	[name=FA] \bA & [name=FB2] \bB_2
	\ncline{H->}{FA}{FB1}<{f_1}
	\ncline{H->}{FA}{FB2}^{f_2}
	\ncline{H->}{FB1}{FC}^{g_1}
	\ncline{H->}{FB2}{FC}>{g_2}
	\ncarc[arcangle=25]{->}{FB1}{GT}^{h'_1}
	\ncarc[arcangle=-25]{->}{FB2}{GT}>{h'_2}
	\ncarc[arcangle=30]{->}{FC}{GTp}^{h'}
	\ncline{H->}{GT}{GTp}_{k}
	\end{psmatrix}
	\]
	By condition (2), there exists a homomorphism $l:\bT''\to\bT$ such that $\iota=l\circ k$. That is, the following diagram commutes:
	\[
	\begin{psmatrix}
	& & & [name=GTp] \bT''\\
	& & [name=GT] \bT' & & [name=T]\bT\\
	[name=FB1] \bB_1 & [name=FC] \bC\\
	[name=FA] \bA & [name=FB2] \bB_2
	\ncline{H->}{FA}{FB1}<{f_1}
	\ncline{H->}{FA}{FB2}^{f_2}
	\ncline{H->}{FB1}{FC}^{g_1}
	\ncline{H->}{FB2}{FC}>{g_2}
	\ncarc[arcangle=25]{->}{FB1}{GT}^{h'_1}
	\ncarc[arcangle=-25]{->}{FB2}{GT}>{h'_2}
	\ncarc[arcangle=30]{->}{FC}{GTp}^{h'}
	\ncline{H->}{GT}{GTp}_{k}
	\ncline{H->}{GT}{T}_{\iota}
	\ncline{->}{GTp}{T}>{l}
	\end{psmatrix}
	\]
	In particular, the following diagram commutes:
	\[
	\begin{psmatrix}
	& & [name=T] \bT\\
	[name=B1]\bB_1 & [name=C]\bC\\
	[name=A]\bA & [name=B2]\bB_2
	\ncline{H->}{A}{B1}<{f_1}
	\ncline{H->}{A}{B2}^{f_2}
	\ncline{H->}{B1}{C}_{g_1}
	\ncline{H->}{B2}{C}<{g_2}
	\ncarc[arcangle=25]{->}{B1}{T}^{h_1}
	\ncarc[arcangle=-25]{->}{B2}{T}>{h_2}
	\ncline{->}{C}{T}<{l\circ h'}
	\end{psmatrix}
	\]
	Hence, condition (1) is fulfilled.
\end{proof}

The set of retracts of a structure $\bU$ can be endowed with a binary relation $\sqsubseteq$. We write $\bA\sqsubseteq\bB$ whenever $\bA$ is a retract of $\bB$. Clearly, $\sqsubseteq$ is a partial order relation. The following proposition shows that the universal homogeneous retracts of a countable homogeneous structure form a down-set in this poset:
\begin{proposition}\label{subretract}
	Let $\cC$ be a \Fraisse-class with \Fraisse-limit $\bU$  and let $\bV, \bW\in\overline{\cC}$
	Let $r:\bU\epito \bV$ be a universal homogeneous retraction. Let $s:\bV\epito\bW$ be any retraction. Then there is a universal homogeneous retraction $\hat{s}:\bU\epito\bW$.
\end{proposition}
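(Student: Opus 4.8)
The plan is to apply the characterization of universal homogeneous retractions from Theorem~\ref{uhretract}: rather than attempting to verify directly that the composite $s\circ r$ is universal and homogeneous, I would produce a universal homogeneous retraction onto $\bW$ by checking the two combinatorial conditions~\eqref{cond1} and~\eqref{cond2} of that theorem for the target $\bW$. The point is that these conditions are already available for $\bV$: since $r:\bU\epito\bV$ is a universal homogeneous retraction and $\bV\in\overline{\cC}=\overline{\Age(\bU)}$, the ``$\Rightarrow$'' direction of Theorem~\ref{uhretract} (applied with $\bT=\bV$) tells us that~\eqref{cond1} and~\eqref{cond2} hold with target $\bV$. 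A direct attempt to establish homogeneity of $s\circ r$ in the sense of Definition~\ref{univhom} is awkward, because $s$ need not be injective, so an identity of the form $s\circ r\restr_A=s\circ r\circ\iota$ does not entail $r\restr_A=r\circ\iota$; working with the abstract conditions sidesteps this difficulty entirely.

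The transport between $\bW$ and $\bV$ is carried out through a section of $s$. Since $s:\bV\epito\bW$ is a retraction, fix an embedding $j:\bW\injto\bV$ with $s\circ j=1_\bW$. To verify~\eqref{cond2} for $\bW$: given $\bA,\bB\in\cC$, $\iota:\bA\injto\bB$ and $h:\bA\to\bW$, apply~\eqref{cond2} for $\bV$ to the homomorphism $j\circ h:\bA\to\bV$, obtaining $g:\bB\to\bV$ with $g\circ\iota=j\circ h$; then $\hat h:=s\circ g$ satisfies $\hat h\circ\iota=s\circ j\circ h=h$, as required.

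To verify~\eqref{cond1} for $\bW$: given $\bA,\bB_1,\bB_2\in\cC$, embeddings $f_1,f_2$ and homomorphisms $h_1:\bB_1\to\bW$, $h_2:\bB_2\to\bW$ with $h_1\circ f_1=h_2\circ f_2$, observe that $(j\circ h_1)\circ f_1=(j\circ h_2)\circ f_2$. Applying~\eqref{cond1} for $\bV$ to $j\circ h_1$ and $j\circ h_2$ yields $\bC\in\cC$, embeddings $g_1:\bB_1\injto\bC$, $g_2:\bB_2\injto\bC$ and $h:\bC\to\bV$ with $h\circ g_1=j\circ h_1$ and $h\circ g_2=j\circ h_2$. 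Setting $\tilde h:=s\circ h$ gives $\tilde h\circ g_1=s\circ j\circ h_1=h_1$ and, likewise, $\tilde h\circ g_2=h_2$, which is exactly~\eqref{cond1} for $\bW$.

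With both conditions established for the target $\bW$, and since $\bW\in\overline\cC$ by hypothesis, the ``$\Leftarrow$'' direction of Theorem~\ref{uhretract} yields the desired universal homogeneous retraction $\hat s:\bU\epito\bW$ (its domain, being the \Fraisse-limit of $\cC$, is $\bU$ up to isomorphism). The only genuinely delicate point is recognizing that the section $j$ of $s$ is the correct device for pulling problems about $\bW$ back to $\bV$ and pushing the solutions forward again via $s$; once this is in place, each verification is a one-line diagram chase using $s\circ j=1_\bW$.
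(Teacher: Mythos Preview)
Your proof is correct and follows essentially the same approach as the paper: both fix a section of $s$, use it to transport the data of conditions~\eqref{cond1} and~\eqref{cond2} from $\bW$ to $\bV$, apply the conditions there (which hold by the ``$\Rightarrow$'' direction of Theorem~\ref{uhretract} for $\bV$), and then postcompose with $s$ to return to $\bW$. The paper's argument differs only in notation (its section is called $t$) and in verifying~\eqref{cond1} before~\eqref{cond2}.
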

\begin{proof}
	Let $t$ be a section for $s$ (i.e., $s\circ t=1_\bW$).
	
	We are going to show that conditions \eqref{cond1} and \eqref{cond2} of Theorem~\ref{uhretract} are fulfilled for $\bW$.
	
	Let $\bA,\bB_1,\bB_2\in\cC$, $f_1:\bA\injto\bB_1$, $f_2:\bA\injto\bB_2$, $h_1:\bB_1\to\bW$, and $h_2:\bB_2\to\bW$, such that $h_1\circ f_1=h_2\circ f_2$. Let $\hat h_1:=t\circ h_1$, and $\hat h_2:= t\circ h_2$. Then, obviously, $\hat h_1\circ f_1=\hat h_2\circ f_2$. Since there is a universal homogeneous retraction from $\bU$ to $\bV$, from Theorem~\ref{uhretract} it follows that there exist $\bC\in\cC$, $g_1:\bB_1\injto\bC$, $g_2:\bB_2\injto\bC$, $\hat h:\bC\to\bV$, such that the following diagram commutes:
	\[
	\begin{psmatrix}
	& & [name=W] \bV\\
	[name=B1] \bB_1 & [name=C] \bC\\
	[name=A] \bA & [name=B2] \bB_2
	\ncline{H->}{A}{B1}<{f_1}
	\ncline{H->}{A}{B2}^{f_2}
	\ncline{H->}{B1}{C}^{g_1}
	\ncline{H->}{B2}{C}>{g_2}
	\ncarc[arcangle=25]{->}{B1}{W}^{\hat h_1}
	\ncarc[arcangle=-25]{->}{B2}{W}>{\hat h_2}
	\ncline{->}{C}{W}<{\hat h}
	\end{psmatrix}
	\]
	Let $h:= s\circ\hat h$. Then $h\circ g_i=s\circ\hat h\circ g_i = s\circ\hat h_i=s\circ t\circ h_i = h_i$, for $i\in\{1,2\}$. Hence the following diagram commutes:
	\[
	\begin{psmatrix}
	& & [name=W] \bW\\
	[name=B1] \bB_1 & [name=C] \bC\\
	[name=A] \bA & [name=B2] \bB_2
	\ncline{H->}{A}{B1}<{f_1}
	\ncline{H->}{A}{B2}^{f_2}
	\ncline{H->}{B1}{C}^{g_1}
	\ncline{H->}{B2}{C}>{g_2}
	\ncarc[arcangle=25]{->}{B1}{W}^{h_1}
	\ncarc[arcangle=-25]{->}{B2}{W}>{h_2}
	\ncline{->}{C}{W}<{h}
	\end{psmatrix}
	\]
	We conclude that condition \eqref{cond1} of Theorem~\ref{uhretract} is fulfilled for $\bW$. 
	
	Let now $\bA,\bB\in\cC$, $\iota:\bA\injto\bB$, $h:\bA\to\bW$. Then $t\circ h:\bA\to\bV$. Hence, by condition \eqref{cond2} of Theorem~\ref{uhretract}, there exists $\tilde h:\bB\to\bV$, such that $t\circ h=\tilde h\circ\iota$. Define $\hat h:= s\circ\tilde h$. Then $\hat h\circ\iota = s\circ\tilde h\circ\iota = s\circ t\circ h = h$. Hence, condition \eqref{cond2} of Theorem~\ref{uhretract} is fulfilled for $\bW$. 
	
	Now, from Theorem~\ref{uhretract} it follows that there exists a universal homogeneous retract $\hat s$ from $\bU$ to $\bW$. 
\end{proof}

Of course, every structure is a retract of itself. If this trivial retract is induced by a universal homogeneous retraction, then this retraction is a universal homogeneous endomorphism. By Proposition~\ref{subretract}, we conclude that a homogeneous structure has the property that each of its retracts is induced by a universal homogeneous retraction if and only if it has a universal homogeneous endomorphism. In the following we will characterize all homogeneous structures that have a universal homogeneous endomorphism. Before formulating the criterion, we need to make a few preparations:
\begin{definition}
	Let $\cC$ be an age. We say that $\cC$ has the \emph{homo amalgamation property} (HAP) if for every $\bA$, $\bB_1$, $\bB_2$ from $\cC$, for all embeddings $f_1:\bA\injto\bB_1$, and for all homomorphisms $f_2:\bA\to\bB_2$ there exists some $\bC\in\cC$, an embedding $g_2:\bB_2\injto\bC$ and a homomorphism $g_2:\bB_2\to\bC$ such that the following diagram commutes:
	\[
	\begin{psmatrix}
	[name=B1]\bB_1 & [name=C]\bC\\
	[name=A]\bA & [name=B2]\bB_2
	\ncline{H->}{A}{B1}<{f_1}
	\ncline{->}{A}{B2}_{f_2}
	\ncline{H->}{B2}{C}>{g_2}
	\ncline{->}{B1}{C}^{g_1}
	\end{psmatrix}
	\]
\end{definition}
It was proved by Dolinka \cite[Prop.3.8]{Dol11} that a countable homogeneous structure is homomorphism homogeneous if and only if its age has the HAP. Recall that a structure is \emph{homomorphism homogeneous} if every homomorphism between finitely generated substructures can be extended to an endomorphism of the structure. Moreover, a structure $\bU$ is called \emph{weakly homomorphism homogeneous} if for all $\bA,\bB\in\Age(\bU)$, for all homomorphisms $h:\bA\to\bU$, and for all embeddings $\iota:\bA\injto\bB$, there exists a homomorphism $\hat{h}:\bB\to\bU$ such that the following diagram commutes:
\[
\begin{psmatrix}
	[name=B] \bB & [name=U] \bU\\
	[name=A] \bA 
	\ncline{H->}{A}{B}<{\iota}
	\ncline{->}{A}{U}>{h}
	\ncline{->}{B}{U}^{\hat{h}}
\end{psmatrix}
\] 
Clearly, homomorphism homogeneity implies weak homomorphism homogeneity, and for countable structures, these notions are equivalent. 

\begin{proposition}\label{hapext}
	Let $\cC$ be a \Fraisse-class. Let $\bU$ be its \Fraisse-limit. Then $\bU$ has a universal homogeneous endomorphism if and only if $\cC$ has the amalgamated extension property and the HAP.
\end{proposition}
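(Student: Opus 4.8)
The plan is to reduce everything to Theorem~\ref{uhretract} applied with $\bT:=\bU$. By the remark preceding the statement, $\bU$ has a universal homogeneous endomorphism precisely when there is a universal homogeneous retraction $r\colon\bU\epito\bU$; and since $\bU\in\overline{\cC}$, Theorem~\ref{uhretract} characterizes the existence of such an $r$ by its two conditions \eqref{cond1} and \eqref{cond2}, read with $\bT$ replaced by $\bU$. So the whole proof reduces to showing that, for $\bT=\bU$, conditions \eqref{cond1} and \eqref{cond2} together are equivalent to the conjunction of the amalgamated extension property and the HAP.

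First I would identify condition \eqref{cond2} with the HAP. For $\bT=\bU$ condition \eqref{cond2} says exactly that every homomorphism from a finitely generated $\bA\in\cC$ into $\bU$ extends, along any embedding $\iota\colon\bA\injto\bB$ with $\bB\in\cC$, to a homomorphism $\bB\to\bU$; this is the definition of $\bU$ being weakly homomorphism homogeneous. Using that weak homomorphism homogeneity and homomorphism homogeneity coincide for countable structures, together with Dolinka's characterization \cite[Prop.3.8]{Dol11} of homomorphism homogeneity of a countable homogeneous structure by the HAP of its age, I obtain the equivalence of condition \eqref{cond2} with the statement that $\cC$ has the HAP.

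Next I would relate condition \eqref{cond1} to the amalgamated extension property. For the direction from AEP (plus \eqref{cond2}) to \eqref{cond1} I would simply quote Lemma~\ref{amalgextred}, which applies since $\bU\in\overline{\cC}$. The substantive direction is that \eqref{cond1} implies the AEP. Here, given AEP-data $\bA,\bB_1,\bB_2,\bT_0\in\cC$ with $f_1\colon\bA\injto\bB_1$, $f_2\colon\bA\injto\bB_2$, $h_1\colon\bB_1\to\bT_0$, $h_2\colon\bB_2\to\bT_0$ and $h_1\circ f_1=h_2\circ f_2$, I would choose an embedding $\jmath\colon\bT_0\injto\bU$ (possible since $\bT_0\in\cC=\Age(\bU)$), feed the composites $\jmath\circ h_1,\jmath\circ h_2$ into condition \eqref{cond1}, and obtain $\bC\in\cC$, embeddings $g_1,g_2$ into $\bC$ with $g_1\circ f_1=g_2\circ f_2$, and $h\colon\bC\to\bU$ with $h\circ g_i=\jmath\circ h_i$. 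Finally I would let $\bT'$ be the substructure of $\bU$ generated by $h(\bC)\cup\jmath(\bT_0)$, which is finitely generated and hence lies in $\cC$, and corestrict $\jmath$ and $h$ to $\bT'$ to get $k\colon\bT_0\injto\bT'$ and $\bar h\colon\bC\to\bT'$ witnessing the AEP.

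Combining the pieces, if $\cC$ has the AEP and the HAP then \eqref{cond2} holds (HAP) and \eqref{cond1} follows by Lemma~\ref{amalgextred}, giving a universal homogeneous endomorphism; conversely its existence yields \eqref{cond1} and \eqref{cond2}, hence the AEP and the HAP. The only genuinely delicate point — the main obstacle — is the passage in the ``\eqref{cond1}$\Rightarrow$AEP'' step between the \emph{external} condition \eqref{cond1}, whose target is the fixed \Fraisse-limit $\bU$, and the \emph{internal} AEP, which quantifies over arbitrary finite targets $\bT_0,\bT'\in\cC$. Making this rigorous rests on the two facts that every member of $\cC$ embeds into $\bU$ and that every finitely generated substructure of $\bU$ lies in $\cC$, together with the routine but necessary check that corestricting to the (strong) substructure $\bT'\le\bU$ turns $\jmath$ into an embedding and $h$ into a homomorphism with the required commutativities $\bar h\circ g_i=k\circ h_i$.
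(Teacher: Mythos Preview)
Your proof is correct. The paper's own argument does not invoke Theorem~\ref{uhretract} as a black box but instead reassembles its ingredients: it sets up the comma category directly, uses Theorem~\ref{mainconstruction} to obtain a universal homogeneous object $(\hat\bU,u,\bU)$, and then appeals to Proposition~\ref{AFsat} (via the mixed amalgamation property, equivalently weak homomorphism homogeneity) to identify $\hat\bU$ with $\bU$; for the converse it derives the HAP from Proposition~\ref{AFsat} and the amalgamated extension property by the same factor-through-a-finitely-generated-substructure trick you use. Your route is a clean repackaging: since Theorem~\ref{uhretract} already encodes exactly the existence criterion with $\bT=\bU$, and Lemma~\ref{amalgextred} already does the ``AEP $+$ \eqref{cond2} $\Rightarrow$ \eqref{cond1}'' step, all that remains is the identification of \eqref{cond2} with the HAP (via Dolinka) and the passage \eqref{cond1}$\Rightarrow$AEP, which both proofs handle identically. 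Your version is shorter and avoids revisiting the categorical machinery; the paper's version is more self-contained at this point in the exposition. Substantively the two arguments coincide.
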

\begin{proof}
	Let $\fA=(\overline{\cC},\injto)$, $\fC=(\overline{\cC},\to)$, and let $\fB$ be the subcategory of $\fC$ that consists only of the object $\bU$ and the identity morphism $1_\bU$. Let further $F:\fA\to\fC$ and $G:\fB\to\fC$ be the identical embeddings. Then conditions (1)--(5) of Theorem~\ref{mainconstruction} are clearly fulfilled.
	
	``$\Leftarrow$'' First we show that $(F\downarrow G)$ has the $(F\restr_{\fA_{<\aleph_0}}, G\restr_{\fB_{<\aleph_0}})$-amalgamation property: Let $\bA,\bB_1,\bB_2\in\fA_{<\aleph_0}$, $f_1:\bA\injto\bB_1$, $f_2:\bA\injto\bB_2$, $h_1:\bB_1\to \bU$, $h_2:\bB_2\to\bU$ such that $h_1\circ f_1 = h_2\circ f_2$. Since $\bB_1$ and $\bB_2$ are finitely generated, there exists a finitely generated substructure $\bU'$ of $\bU$, and homomorphisms $h_1':\bB_1\to\bU'$, $h_2':\bB_2\to\bU'$ such that $\iota\circ h'_1=h_1$ and $\iota\circ h'_2=h_2$ where $\iota:\bU'\to\bU$ is the identical embedding.  Since $\cC$ has the amalgamated extension property,  there exists $\bC\in\cC$, $g_1:\bB_1\injto\bC$, $g_2:\bB_2\injto\bC$, $\bU''\in\cC$, $h:\bC\to\bU''$, $k:\bU'\injto\bU''$ such that the following diagram commutes: 
	\[
	\begin{psmatrix}
	& & & [name=GTp] \bU''\\
	& & [name=GT] \bU'\\
	[name=FB1] \bB_1 & [name=FC] \bC\\
	[name=FA] \bA & [name=FB2] \bB_2
	\ncline{H->}{FA}{FB1}<{f_1}
	\ncline{H->}{FA}{FB2}^{f_2}
	\ncline{H->}{FB1}{FC}^{g_1}
	\ncline{H->}{FB2}{FC}>{g_2}
	\ncarc[arcangle=25]{->}{FB1}{GT}^{h'_1}
	\ncarc[arcangle=-25]{->}{FB2}{GT}>{h'_2}
	\ncarc[arcangle=30]{->}{FC}{GTp}^{h}
	\ncline{H->}{GT}{GTp}_{k}
	\end{psmatrix}
	\]
	Since $\bU$ is homogeneous, it is in particular weakly homogeneous and thus $\fA_{<\aleph_0}$-saturated. So there exists $\hat{\iota}:\bU''\to\bU$ such that $\hat{\iota}\circ k =\iota$. 
	\[
	\begin{psmatrix}
	& & & [name=Upp] \bU''\\
	& & [name=Up] \bU' & & [name=U]\bU\\
	[name=B1] \bB_1 & [name=C] \bC\\
	[name=A] \bA & [name=B2] \bB_2
	\ncline{H->}{A}{B1}<{f_1}
	\ncline{H->}{A}{B2}^{f_2}
	\ncline{H->}{B1}{C}^{g_1}
	\ncline{H->}{B2}{C}>{g_2}
	\ncarc[arcangle=25]{->}{B1}{Up}^{h'_1}
	\ncarc[arcangle=-25]{->}{B2}{Up}>{h'_2}
	\ncarc[arcangle=30]{->}{C}{Upp}^{h}
	\ncline{H->}{Up}{Upp}_{k}
	\ncline{H->}{Up}{U}_{\iota}
	\ncline{H->}{Upp}{U}_{\hat{\iota}}
	\end{psmatrix}
	\]
	It follows that also  the following diagram commutes:
	\[
	\begin{psmatrix}
	& & [name=U] \bU\\
	[name=B1] \bB_1 & [name=C] \bC\\
	[name=A] \bA & [name=B2] \bB_2
	\ncline{H->}{A}{B1}<{f_1}
	\ncline{H->}{A}{B2}^{f_2}
	\ncline{H->}{B1}{C}^{g_1}
	\ncline{H->}{B2}{C}>{g_2}
	\ncarc[arcangle=25]{->}{B1}{U}^{h_1}
	\ncarc[arcangle=-25]{->}{B2}{U}>{h_2}
	\ncline{->}{C}{U}<{\hat{\iota}\circ h}
	\end{psmatrix}
	\]
	We obtain that $\fA_{<\aleph_0}$ has the $(F\restr_{\fA_{<\aleph_0}},G\restr_{\fB_{<\aleph_0}})$-amalgamation property.
	
	Since $\bU$ contains a smallest substructure (namely, the substructure generated by the empty set), it follows that $\fA_{<\aleph_0}$ has an initial object. Moreover, $F$ maps this initial object to an initial object of $\fC$. Hence, from the $(F\restr_{\fA_{<\aleph_0}},G\restr_{\fB_{<\aleph_0}})$-amalgamation property of $\fA_{<\aleph_0}$ follows the  $(F\restr_{\fA_{<\aleph_0}},G\restr_{\fB_{<\aleph_0}})$-joint embedding property. Now, by Theorem~\ref{mainconstruction}, it follows that $(F\downarrow G)$ has an $(F\downarrow G)$-universal, $(F\downarrow G)_{<\aleph_0}$-homogeneous object $(\hat{\bU},i,\bU)$. It remains to show that $\hat{\bU}\cong\bU$. Clearly, $\Age(\hat{\bU})=\Age(\bU)$.  Since $\cC$ has the HAP, it follows from a result of Dolinka \cite[Prop.3.8]{Dol11}, that $\bU$ is homomorphism-homogeneous. In particular, it is weakly homomorphism-homogeneous. From this it follows directly, that $F$ and $G$ have the mixed amalgamation property. Finally, from Proposition~\ref{AFsat}, it follows that $\hat\bU$ is $\fA_{<\aleph_0}$-saturated. In other words, $\hat{\bU}$ is homogeneous. It follows from \Fraisse's theorem that $\hat{\bU}$ and $\bU$ are isomorphic. So without loss of generality we may assume that $\hat\bU=\bU$. Thus $u$ is a universal homogeneous endomorphism of $\bU$. 
	
	``$\Rightarrow$'' Suppose that $u$ is a universal homogeneous endomorphism of $\bU$. Then $(\bU,u,\bU)$ is an $(F\downarrow G)$-universal and $(F\downarrow G)_{<\aleph_0}$ homogeneous object in $(F\downarrow G)$. It follows from Theorem~\ref{mainconstruction} that $\fA$ has the $(F\restr_{\fA_{<\aleph_0}},G\restr_{\fB_{<\aleph_0}})$-amalgamation property, and the $(F\restr_{\fA_{<\aleph_0}},G\restr_{\fB_{<\aleph_0}})$-joint embedding property. Since $\bU$ is homogeneous, it is in particular weakly homogenous and thus $\fA_{<\aleph_0}$-saturated. Hence it follows from Proposition~\ref{AFsat} that $F$ and $G$ have the mixed amalgamation property. In other words, $\bU$ is weakly homomorphism homogeneous, and thus homomorphism homogeneous. It follows that $\cC$ has the HAP. 
	
	Let $\bA,\bB_1,\bB_2,\bU'\in\cC$, $f_1:\bA\injto\bB_1$, $f_2:\bA\injto\bB_2$, $h_1:\bB_1\to\bU'$, $h_2:\bB_2\to\bU'$, such that the following diagram commutes:
	\[
	\begin{psmatrix}
	& & [name=Up] \bU'\\
	[name=B1] \bB_1 \\
	[name=A] \bA & [name=B2] \bB_2
	\ncline{H->}{A}{B1}<{f_1}
	\ncline{H->}{A}{B2}^{f_2}
	\ncarc[arcangle=25]{->}{B1}{Up}^{h_1}
	\ncarc[arcangle=-25]{->}{B2}{Up}>{h_2}
	\end{psmatrix}
	\]
	Since $\bU$ is universal for $\overline{\cC}$, there exists an embedding $\iota:\bU'\injto\bU$. By the $(F\restr_{\fA_{<\aleph_0}},G\restr_{\fB_{<\aleph_0}})$-amalgamation property, there exists $\bC\in\cC$, $g_1:\bB_1\injto\bC$, $g_2:\bB_2\injto\bC$, $h:\bC\to\bU$ such that the following diagram commutes:
	\[
	\begin{psmatrix}
	& & [name=U] \bU\\
	[name=B1] \bB_1 & [name=C] \bC\\
	[name=A] \bA & [name=B2] \bB_2
	\ncline{H->}{A}{B1}<{f_1}
	\ncline{H->}{A}{B2}^{f_2}
	\ncline{H->}{B1}{C}^{g_1}
	\ncline{H->}{B2}{C}>{g_2}
	\ncarc[arcangle=25]{->}{B1}{U}^{\iota\circ h_1}
	\ncarc[arcangle=-25]{->}{B2}{U}>{\iota\circ h_2}
	\ncline{->}{C}{U}<{h}
	\end{psmatrix}
	\]
	Since $\bC$ and $\bU'$ are finitely generated, there exists a finitely generated substructure $\bU''$ of $\bU$ with identical embedding $\iota'':\bU''\injto\bU$, such that both, $h$ and $\iota$ factor through $\iota''$ --- i.e. there exists $k:\bU'\injto\bU''$, $h':\bC\to\bU''$ such that $h=\iota''\circ h'$ and $\iota=\iota''\circ k$. In particular, the following diagram commutes:
	\[
	\begin{psmatrix}
	& & & [name=GTp] \bU''\\
	& & [name=GT] \bU'\\
	[name=FB1] \bB_1 & [name=FC] \bC\\
	[name=FA] \bA & [name=FB2] \bB_2
	\ncline{H->}{FA}{FB1}<{f_1}
	\ncline{H->}{FA}{FB2}^{f_2}
	\ncline{H->}{FB1}{FC}^{g_1}
	\ncline{H->}{FB2}{FC}>{g_2}
	\ncarc[arcangle=25]{->}{FB1}{GT}^{h_1}
	\ncarc[arcangle=-25]{->}{FB2}{GT}>{h_2}
	\ncarc[arcangle=30]{->}{FC}{GTp}^{h'}
	\ncline{H->}{GT}{GTp}_{k}
	\end{psmatrix}
	\]
	It follows that $\cC$ has the amalgamated extension property.
\end{proof}

\begin{proposition}
	Let $\bU$ be a countable structure that has a universal homogeneous endomorphism. Then $\bU$ is homogeneous if and only if $\bU$ is homomorphism homogeneous.
\end{proposition}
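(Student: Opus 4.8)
The plan is to recognize that this biconditional is essentially the content of Proposition~\ref{AFsat} once it is instantiated into the right comma category, so that no fresh back-and-forth argument is needed. I would set $\cC := \Age(\bU)$, $\fA := (\overline{\cC},\injto)$, $\fC := (\overline{\cC},\to)$, and let $\fB$ be the one-object subcategory of $\fC$ whose only morphism is $1_\bU$, with $F\colon\fA\to\fC$ and $G\colon\fB\to\fC$ the identical embeddings, exactly as in the proofs of Theorem~\ref{hom-constraints}, Theorem~\ref{uhretract} and Proposition~\ref{hapext}. Conditions (1)--(5) of Theorem~\ref{mainconstruction} hold for these functors and $F$ is faithful, just as there.

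Next I would feed in the hypothesis. Writing $u\colon\bU\to\bU$ for a universal homogeneous endomorphism, the triple $(\bU,u,\bU)$ is an $(F\downarrow G)$-universal, $(F\downarrow G)_{<\aleph_0}$-homogeneous object of $(F\downarrow G)$; this identification is precisely the one made in the ``$\Rightarrow$'' part of the proof of Proposition~\ref{hapext}. By Theorem~\ref{DroGoe} the existence of such an object forces $(F\downarrow G)_{<\aleph_0}$ to have the joint embedding property and the amalgamation property, so that the last hypothesis, and hence all hypotheses, of Proposition~\ref{AFsat} are met.

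Proposition~\ref{AFsat} then yields in one stroke that $\bU$ is $\fA_{<\aleph_0}$-saturated if and only if $F\restr_{\fA_{<\aleph_0}}$ and $G\restr_{\fB_{<\aleph_0}}$ have the mixed amalgamation property. The remaining work is purely translational, reading both sides back into model theory. On one side, $\fA_{<\aleph_0}$-saturation of $\bU$ is weak homogeneity (cf. the Remark after Proposition~\ref{univhomsat}), which for the countable structure $\bU$ coincides with homogeneity. On the other side, since $\fB$ has $\bU$ as its only object and $1_\bU$ as its only morphism, the mixed amalgamation property of $F$ and $G$ unwinds to the statement that for every embedding $\iota\colon\bA\injto\bB$ and every homomorphism $h\colon\bA\to\bU$ with $\bA,\bB\in\cC$ there is a homomorphism $\hat h\colon\bB\to\bU$ extending $h$ --- that is, to weak homomorphism homogeneity, which for countable $\bU$ is equivalent to homomorphism homogeneity. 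Combining these identifications gives the desired equivalence.

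I do not expect a genuine obstacle, since the substance has been front-loaded into Proposition~\ref{AFsat}. The one place to be careful is the bookkeeping in the last step: one must check that in this degenerate $\fB$ the only available vertical morphism $h\colon T_1\to T_2$ is $1_\bU$, so that the mixed amalgamation diagram collapses to the plain extension condition $\hat h\circ\iota = h$ rather than to a condition allowing a nontrivial twist by an automorphism of the codomain. It is exactly this collapse that makes the mixed amalgamation property coincide with (weak) homomorphism homogeneity, rather than with some weaker ``up to automorphism'' variant, and that is the detail worth writing out carefully.
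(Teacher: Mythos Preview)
Your proposal is correct and takes essentially the same approach as the paper. The paper splits the two directions, invoking Proposition~\ref{hapext} for ``$\Rightarrow$'' and Proposition~\ref{AFsat} for ``$\Leftarrow$'', but since the relevant part of the proof of Proposition~\ref{hapext} is itself an application of Proposition~\ref{AFsat}, your unified use of Proposition~\ref{AFsat} as a biconditional is just a more streamlined version of the same argument.
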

\begin{proof}
	``$\Rightarrow$'' this follows directly from Proposition~\ref{hapext}.
	
	``$\Leftarrow$'' Let $\cC:=\Age(\bU)$. Take the categories $\fA$, $\fB$, $\fC$ and the functors $F$ and $G$ as given in the proof of Proposition~\ref{hapext}. Clearly, Conditions (1),\dots,(6) of Proposition~\ref{AFsat} are fulfilled.

	Since $\bU$ has a universal homogeneous endomorphism, it follows, that  $(\bU,u,\bU)$ is an $(F\downarrow G)$-universal, $(F\downarrow G)_{<\aleph_0}$-homogeneous object in $(F\downarrow G)$. Hence, by Theorem~\ref{DroGoe}, Condition (7) of Proposition~\ref{AFsat} is fulfilled, too. 
	
	Since $\bU$ is homomorphism homogeneous, it follows that it is weakly homomorphism homogeneous. This means that $F$ and $G$ have the mixed amalgamation property.
	Now we can use Proposition~\ref{AFsat} to conclude that $\bU$ is homogeneous.
\end{proof}

\begin{corollary}\label{allretsunivhom}
	Let $\cC$ be a \Fraisse-class with \Fraisse-limit $\bU$. Then $\cC$ has the HAP and the amalgamated extension property if and only if every retract of $\bU$ is induced by a universal homogeneous retraction. 
\end{corollary}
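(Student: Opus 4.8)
The plan is to obtain the corollary by chaining together Proposition~\ref{hapext} and Proposition~\ref{subretract}, using the trivial retract of $\bU$ as a bridge. Concretely, I will show that \emph{both} conditions appearing in the statement are equivalent to the single intermediate assertion that $\bU$ possesses a universal homogeneous endomorphism. The equivalence of ``$\cC$ has the HAP and the amalgamated extension property'' with this intermediate assertion is precisely Proposition~\ref{hapext}, so the only thing that genuinely has to be argued is that $\bU$ has a universal homogeneous endomorphism if and only if every retract of $\bU$ is induced by a universal homogeneous retraction.

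For the direction from a universal homogeneous endomorphism to the retract property, I would start from a universal homogeneous endomorphism $u\colon\bU\to\bU$. Since $\bU\in\overline{\cC}$, Lemma~\ref{retract} shows that $u$ is automatically a retraction, so $r:=u\colon\bU\epito\bU$ is a universal homogeneous retraction whose codomain is $\bU$ itself. Now let $\bW$ be an arbitrary retract of $\bU$, witnessed by a retraction $s\colon\bU\epito\bW$ (the corestriction of an idempotent endomorphism of $\bU$ with image $\bW$); note $\bW$ is a substructure of $\bU$, hence $\bW\in\overline{\Age(\bU)}=\overline{\cC}$. Applying Proposition~\ref{subretract} with $\bV:=\bU$, the universal homogeneous retraction $r\colon\bU\epito\bU$, and the retraction $s\colon\bU\epito\bW$, I obtain a universal homogeneous retraction $\hat{s}\colon\bU\epito\bW$. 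Thus $\bW$ is induced by a universal homogeneous retraction, as required.

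For the converse I would simply specialise the hypothesis to the trivial retract: $\bU$ is a retract of itself via the identity endomorphism, so by assumption there is a universal homogeneous retraction $r\colon\bU\epito\bU$. Being a universal homogeneous homomorphism from $\bU$ to $\bU$, this $r$ is by definition a universal homogeneous endomorphism of $\bU$. Proposition~\ref{hapext} then yields that $\cC$ has the HAP and the amalgamated extension property, closing the loop.

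I do not expect a genuine obstacle, since the real content lies in the already-established Propositions~\ref{hapext} and~\ref{subretract}; the present argument is a transitive combination of two equivalences. The only points requiring care are bookkeeping rather than depth: identifying a universal homogeneous endomorphism with a universal homogeneous retraction onto all of $\bU$ (which rests on the observation, via Lemma~\ref{retract}, that such an endomorphism is automatically a retraction), and checking the membership conditions $\bU,\bW\in\overline{\cC}$ so that Proposition~\ref{subretract} is applicable.
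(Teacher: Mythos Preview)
Your proposal is correct and follows essentially the same approach as the paper: both directions pass through the intermediate assertion that $\bU$ has a universal homogeneous endomorphism, invoking Proposition~\ref{hapext} for one equivalence and Proposition~\ref{subretract} (applied to the trivial retract $\bU$ of itself) for the other. Your write-up is in fact slightly more careful than the paper's, since you explicitly invoke Lemma~\ref{retract} to justify that a universal homogeneous endomorphism is a retraction and you verify the membership condition $\bW\in\overline{\cC}$ needed for Proposition~\ref{subretract}.
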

\begin{proof}
``$\Rightarrow$'' From Proposition~\ref{hapext} it follows that $\bU$ has a universal homogeneous endomorphisms $u$. Since $u$, is a retraction, it follows from Lemma~\ref{subretract} that every retract of $\bU$ is induced by a universal homogeneous retraction.

``$\Leftarrow$'' Since every retract of $\bU$ is induced by a universal homogeneous retraction, it follows that in particular the trivial retract $\bU$ of $\bU$ is induced by a universal homogeneous retraction. In other words, $\bU$ has a universal homogeneous endomorphism. From Proposition~\ref{hapext}, it follows that $\cC$ has the amalgamated extension property and the HAP.
\end{proof}

\begin{example}\label{exretracts}
	Homogeneous structures that have both, the HAP and the amalgamated extension property, include:
	\begin{itemize}
		\item the Rado graph $R$ ($\Age(R)$ is the class of all finite simple graphs),
		\item the countable generic poset $\mathbb{P}=(P,\le)$ ($\Age(\mathbb{P})$ is the class of all finite posets),
		\item the countable atomless Boolean algebra $\mathbb{B}$ ($\Age(\mathbb{B})$ is the class of finite Boolean algebras),
		\item the countable universal homogeneous semilattice $\Omega$ ($\Age(\Omega)$  is the class of all finite semilattices),
		\item the countable universal homogeneous distributive lattice $\mathbb{D}$ ($\Age(\mathbb{D})$ is the class of all finite distributive lattices),
		\item the infinite-dimensional vector-space $\mathbb{F}^\omega$ for any countable field $\mathbb{F}$ ($\Age(\mathbb{F}^\omega)$ is the class of all finite-dimensional $\mathbb{F}$-vector spaces), 
		\item the rationals $(\mathbb{Q},\le)$ (the age of $\mathbb{Q}$ is the class of all finite chains),
		\item the rational Urysohn space $\mathbb{U}_{\mathbb{Q}}$ (the age of $\mathbb{U}_{\mathbb{Q}}$ is the class of all  finite metric spaces with rational distances),
	\item the rational Urysohn sphere of radius $1$ (its age is the class of all finite metric spaces with rational distances $\le 1$). 
	\end{itemize}
	In particular, each of these structures has a universal homogeneous endomorphism. Moreover, by Corollary~\ref{allretsunivhom}, all retracts are induced by universal homogeneous retractions.
\end{example}

\begin{remark}
	A universal homogeneous endomorphism $u$ of  $(\mathbb{Q},\le)$ has to have a few peculiar properties.  From the universality, it follows that  the preimage of each point is a dense convex subset of $\mathbb{Q}$. From homogeneity, it follows that  the preimage of every point has neither a greatest, nor a smallest element. Hence, every $x\in\mathbb{Q}$ is contained in an open interval on which $u$ is constant. However, $u$ itself can not be constant, since $u$ also has to be surjective. Since the existence of a function with these properties on the first sight is contra-intuitive, let us sketch a construction. We will proceed similarly to the construction of the Devil's staircase (a.k.a. the Cantor function). Let $A$ be the set of all rationals in the interval $(0,1)$ that do not have a finite expansion to the basis $3$, and whose expansion to basis $3$ contains at least once the digit $1$. Let $B$ be the set of all rationals from the interval $(0,1)$ that have a finite expansion in  base $2$. For $a\in A$ we obtain $u(a)$ by the following process (cf. \cite{DovMarRyaVuo06}):
	\begin{enumerate}
		\item represent $a$ in base $3$,
		\item replace all digits after the first digit $1$ by $0$,
		\item in the resulting tuple, replace all digits $2$ by $1$,
		\item interpret the resulting tuple in base $2$. Thus is defined $u(a)$.
	\end{enumerate}
	Clearly, $u(a)\in B$. Also, it is not hard to see, that $u$ is monotonous and surjective. Both $(A,\le)$ and $(B,\le)$ are dense unbounded chains. Thus, by Cantor's theorem, both are isomorphic to $(\mathbb{Q},\le)$. In the following, we will identify $A$ and $B$ with $\mathbb{Q}$ (using arbitrary isomorphisms). Note now, that the full preimage of every point from $B$ is a dense unbounded chain, again, and hence it is isomorphic to $\mathbb{Q}$. By the monotonicity and surjectivity of $u$, it is an open interval in $(\mathbb{Q},\le)$. Altogether, $u$ has the desired properties.
\end{remark}

\begin{remark}
	Kubi\'s in  \cite{Kub13}, among other things,  characterizes the retracts of homogeneous structures whose age have the HAP, and the amalgamated extension property. The results of this section imply Kubi\'s' result. However, additionally  we showed that for this class of structures every retract is induced by a universal homogeneous retraction and that no other homogeneous structure has this property. Moreover, we characterized the retracts that are induced by universal homogeneous retractions in all homogeneous structures. In this sense, our results nicely complement Kubi\'s's findings. 
\end{remark}

\section{Generating Polymorphism clones of homogeneous structures}\label{genclones}
 Let us in the beginning recall the definition of a clone: Let $A$ be any set, then by $O_A^{(n)}$ we denote the set of all functions from $A^n$ to $A$. Further we define $O_A:=\bigcup_{n\in\bN\setminus\{0\}} O_A^{(n)}$. Finally, for a set $F\subseteq O_A$, by $F^{(n)}$ we will denote $F\cap O_A^{(n)}$.

We distinguish special functions --- the projections $e_i^n$ --- that act like
\[ e_i^n: (x_1,\dots,x_n)\mapsto x_i.\]
The set of all projections on $A$ is denoted by $J_A$.

For $f\in O_A^{(n)}$, $g_1,\dots,g_n\in O_A^{(m)}$ we define the function $f\circ \langle g_1,\dots,g_n\rangle\in O_A^{(m)}$ according to 
\[ f\circ\langle g_1,\dots,g_n\rangle : (x_1,\dots, x_m)\mapsto f(g_1(x_1,\dots,x_m),\dots, g_n(x_1,\dots,x_m)).\]
\begin{definition}	
	 A subset $C\subseteq O_A$ is called a \emph{clone} on $A$ if $J_A\subseteq C$, and if whenever $f\in C^{(n)}$, $g_1,\dots,g_n\in C^{(m)}$ then
	$f\circ \langle g_1,\dots,g_n\rangle\in C^{(m)}$ ($n,m\in\bN\setminus\{0\}$).
\end{definition}
Clearly, the set of clones on a given set $A$ forms a complete algebraic lattice with the intersection as infimum. Therefore, it makes sense to talk about generating sets of clones. If $M\subseteq O_A$, then by $\langle M\rangle_{O_A}$ we will denote the smallest clone on $A$ that contains all functions from $M$. We also say that $M$ is a \emph{generating set} of $\langle M\rangle_{O_A}$. 

Let now $\bA$ be a structure. An $n$-ary polymorphism of $\bA$ is a homomorphism from $\bA^n$ to $\bA$. The set of polymorphisms of $\bA$ of every arity is denoted by $\Pol(\bA)$. It is not hard to see that $\Pol(\bA)$ forms a clone on $A$. The unary polymorphisms of $\bA$ are the endomorphisms of $\bA$ (denoted by $\End\bA$). The submonoid of $\End\bA$ that consists of all homomorphic self-embeddings of $\bA$ will be denoted by $\Emb\bA$.
 
The clone of polymorphisms is a structural invariant, related to the group of automorphisms and the monoid of endomorphisms. In this section we will derive results about generating sets of the polymorphism clones of homogeneous structures. The initial motivation is an old result by Sierpi\'nski who showed in \cite{Sie45} that for every set $A$, the clone $O_A$ of all functions on $A$ is generated by the set of all binary functions on $A$. 

Another motivation comes from the paper \cite{HowRusHig98} where (among other things) it is shown that the semigroup of all transformations on an infinite set $A$ is generated by the set of permutations of $A$ and two additional functions. Using Ru\v{s}kuc' notion of relative ranks (cf. \cite[Def.1.1]{Rus94}) one can say that the semigroup of transformations of $A$ has rank $2$ modulo the full symmetric group on $A$. 

We will adopt the notion of relative rank to the world of clones. Let $F$ be a clone on a set $A$ and let $M\subseteq F$ be an arbitrary subset of $F$. A subset $N$ of $F$ is called \emph{generating set of $F$ modulo $M$} if $\langle M\cup N\rangle_{O_A}=F$. The relative rank of $F$ modulo $M$ will be the smallest cardinal of a generating set $N$ of $F$ modulo $M$. It will be denoted by $\rank(F:M)$.

\begin{proposition}\label{PolGen}
	Let $\bA$ be a structure that such that there exists a retraction $r:\bA\epito \bA^2$. Then
	\[\rank(\Pol\bA: \End\bA) = 1.\]
	In particular, $\Pol \bA$ is generated by $\End\bA$ together with one additional section $\epsilon:\bA^2\injto\bA$.
\end{proposition}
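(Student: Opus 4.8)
The plan is to unpack the hypothesis into concrete functions and then reduce every polymorphism to a single endomorphism precomposed with an iterated ``tupling'' built from $\epsilon$. By the definition of a retraction, the given $r\colon\bA\epito\bA^2$ comes equipped with a section $\epsilon\colon\bA^2\injto\bA$ satisfying $r\circ\epsilon=1_{\bA^2}$; this $\epsilon$ is the promised binary polymorphism. Writing $r=(r_1,r_2)$ with $r_i\colon\bA\to\bA$ the two coordinate maps (these are homomorphisms because homomorphisms into a product are precisely pairs of homomorphisms into the factors), I record that $r_1,r_2\in\End\bA$ and that $r_1\circ\langle\epsilon\rangle=e_1^2$, $r_2\circ\langle\epsilon\rangle=e_2^2$, i.e.\ $r_1(\epsilon(x,y))=x$ and $r_2(\epsilon(x,y))=y$ for all $x,y\in A$.

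First I would show, by induction on $n\ge 1$, that $\bA^n$ is a retract of $\bA$ via data generated by $\epsilon$ and $\End\bA$: concretely, I construct an $n$-ary clone term $\epsilon_n\in\langle\{\epsilon\}\rangle_{O_A}$ together with a homomorphism $r_{(n)}\colon\bA\to\bA^n$ satisfying $r_{(n)}\circ\epsilon_n=1_{\bA^n}$. For the base case take $\epsilon_1=e_1^1$ and $r_{(1)}=1_\bA$. For the step set
\[
\epsilon_{n+1}(x_1,\dots,x_{n+1})=\epsilon\bigl(x_1,\epsilon_n(x_2,\dots,x_{n+1})\bigr),\qquad
r_{(n+1)}(z)=\bigl(r_1(z),\,r_{(n)}(r_2(z))\bigr),
\]
where $\epsilon_{n+1}$ is visibly a composite of $\epsilon$ with projections and $r_{(n+1)}$ is a homomorphism into $\bA^{n+1}$. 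The identity $r_{(n+1)}\circ\epsilon_{n+1}=1_{\bA^{n+1}}$ then follows by a one-line computation from $r_1\circ\langle\epsilon\rangle=e_1^2$, $r_2\circ\langle\epsilon\rangle=e_2^2$, and the inductive hypothesis $r_{(n)}\circ\epsilon_n=1_{\bA^n}$.

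The factorization is now immediate. Let $f$ be an $n$-ary polymorphism of $\bA$, that is, a homomorphism $\bA^n\to\bA$. Then $g:=f\circ r_{(n)}\colon\bA\to\bA$ is a composite of homomorphisms, hence $g\in\End\bA$, and using $r_{(n)}\circ\epsilon_n=1_{\bA^n}$ I compute $g(\epsilon_n(\bar{x}))=f(r_{(n)}(\epsilon_n(\bar{x})))=f(\bar{x})$, so that $f=g\circ\langle\epsilon_n\rangle$. Since $\epsilon_n\in\langle\{\epsilon\}\rangle_{O_A}$ and $g\in\End\bA$, this exhibits $f\in\langle\End\bA\cup\{\epsilon\}\rangle_{O_A}$. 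As $f$ was arbitrary and the reverse inclusion $\langle\End\bA\cup\{\epsilon\}\rangle_{O_A}\subseteq\Pol\bA$ holds because $\Pol\bA$ is a clone containing both $\End\bA$ and $\epsilon$, I conclude $\Pol\bA=\langle\End\bA\cup\{\epsilon\}\rangle_{O_A}$, which already yields $\rank(\Pol\bA:\End\bA)\le 1$ and the ``in particular'' statement.

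For the matching lower bound I would note that $\langle\End\bA\rangle_{O_A}$ contains only essentially unary functions: an easy structural induction shows every term over unary operations and projections has the form $h\circ e_i^n$ with $h\in\End\bA$. But $\epsilon$ depends on both arguments, since if $\epsilon$ factored through a single coordinate then one of $r_1\circ\langle\epsilon\rangle=e_1^2$, $r_2\circ\langle\epsilon\rangle=e_2^2$ would fail as soon as $|A|\ge 2$. Hence $\End\bA$ alone does not generate $\Pol\bA$, giving $\rank(\Pol\bA:\End\bA)=1$. The only real content is the inductive construction of the compatible pair $(\epsilon_n,r_{(n)})$ witnessing that $\bA^n$ is a retract of $\bA$; once this iterated tupling is in place, the decomposition $f=(f\circ r_{(n)})\circ\langle\epsilon_n\rangle$ does all the work, and no homogeneity or extension property of $\bA$ is required.
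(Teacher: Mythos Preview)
Your proof is correct and follows essentially the same approach as the paper: both build an iterated tupling $\epsilon_n$ from $\epsilon$ together with a matching retraction onto $\bA^n$, then factor an arbitrary $n$-ary polymorphism $f$ as $(f\circ r_{(n)})\circ\langle\epsilon_n\rangle$ with $f\circ r_{(n)}\in\End\bA$. The only cosmetic differences are your indexing convention (your $\epsilon_n$ is $n$-ary starting from $\epsilon_1=e_1^1$, whereas the paper's $\epsilon_i$ is $(i{+}1)$-ary starting from $\epsilon_1=\epsilon$), the nesting direction, and that you spell out the coordinates $r_1,r_2$ of $r$ explicitly; your treatment of the lower bound is in fact slightly more careful than the paper's in noting the need for $|A|\ge 2$.
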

\begin{proof}
	Let $\epsilon:\bA^2\injto\bA$ be an embedding, such that $r\circ\epsilon= 1_{\bA^2}$.
	
	We define $\epsilon_1:=\epsilon$, $r_1:=r$, 
	\begin{align*}
		\epsilon_{i+1} &: \bA^{i+2}\injto \bA & \epsilon_{i+1}&:= \epsilon\circ(\epsilon_i\times 1_{\bA})&\text{and}\\
		r_{i+1}&:\bA\epito\bA^{i+2} & r_{i+1}&:= (r_i\times 1_{\bA})\circ r.
	\end{align*}
	
	\textbf{Claim 1:} For all $i\in\bN\setminus\{0\}$ we have $\epsilon_i\in\langle\End\bA\cup\{\epsilon\}\rangle_{O_A}$.
	The proof proceeds by induction on $i$. For $i=1$, nothing needs to be proved. Suppose, $\epsilon_j\in \langle\End\bA\cup\{\epsilon\}\rangle_{O_A}$. It can easily be checked that $\epsilon_{j+1}=\epsilon\circ\langle \epsilon_j\circ\langle e_1^{j+2},\dots, e_{j+1}^{j+2}\rangle, e_{j+2}^{j+2}\rangle$. 
	Thus, Claim 1 is proved.
	
	\textbf{Claim 2:} For all $i\in\bN\setminus\{0\}$, we have $r_i\circ \epsilon_i = 1_{\bA^{i+1}}$. Again we proceed by induction. For $i=1$ nothing needs to be proved. Suppose, $r_j\circ\epsilon_j = 1_{\bA^{j+1}}$. We compute
	\begin{align*}
		r_{j+1}\circ \epsilon_{j+1}(x_1,\dots,x_{j+2}) &= \big((r_i\times 1_{\bA})\circ r\big)\circ\big(\epsilon\circ(\epsilon_j\times 1_{\bA})\big)(x_1,\dots,x_{j+2})\\
		&= \big((r_j\times 1_{\bA})\circ r\big)\Big(\epsilon\big(\epsilon_j(x_1,\dots,x_{j+1}),x_{j+2}\big)\Big)\\
		&= (r_j\times 1_{\bA})\big(\epsilon_j(x_1\dots,x_{j+1}),x_{j+2}\big)\\
		&= (x_1,\dots,x_{j+2}). 
	\end{align*}
	Thus, Claim 2 is proved.
		
	We will now prove for every $i\ge 2$, that  $\Pol^{(i+1)}\bA\subseteq\langle\End\bA\cup\{\epsilon\}\rangle_{O_A}$.  Take any $f\in\Pol^{(i+1)}\bA$. Then $f\circ r_{i}\in\End\bA$.
	But then, by Claim 2, we have $f=(f\circ r_i)\circ\epsilon_i$. Now, from Claim 1 follows that $f\in\langle\End\bA\cup\{\epsilon\}\rangle_{O_A}$. 
	
	It remains to show that $\Pol\bA\neq\langle \End\bA\rangle_{O_A}$. However, since $\epsilon$ is a section, it has to depend on both of its variables, while  $\langle \End\bA\rangle_{O_A}$ consists only of essentially unary functions.
\end{proof}

\begin{corollary}\label{CorPolGen}
	With the notions from above, if $\bA$ has a universal endomorphism $u$, then 
	\[\rank(\Pol\bA: \Emb\bA) \le 2.\]
	In particular, $\Pol \bA$ is generated by $\Emb\bA$ together with $u$ and one additional section $\epsilon:\bA^2\injto\bA$.
\end{corollary}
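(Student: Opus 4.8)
The plan is to deduce this from Proposition~\ref{PolGen} by trading the generating set $\End\bA$ for the smaller set $\Emb\bA$, paying with the single extra generator $u$. The phrase ``with the notions from above'' keeps the hypothesis of Proposition~\ref{PolGen} in force, so there is a retraction $r:\bA\epito\bA^2$ with a section $\epsilon:\bA^2\injto\bA$, and Proposition~\ref{PolGen} gives
\[ \Pol\bA = \langle\End\bA\cup\{\epsilon\}\rangle_{O_A}. \]

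The crux is that universality of $u$ factors every endomorphism through $u$ by an embedding. Applying Definition~\ref{univhom} with $\overline{\cC}=\overline{\Age(\bA)}$ and taking $\bA$ itself as the domain (legitimate since $\bA\in\overline{\Age(\bA)}$), every homomorphism $h:\bA\to\bA$, i.e.\ every $h\in\End\bA$, is of the form $h=u\circ\iota=u\circ\langle\iota\rangle$ for some embedding $\iota\in\Emb\bA$. Since $u\circ\langle\iota\rangle$ is a clone composition of the unary operations $u$ and $\iota$, this shows $\End\bA\subseteq\langle\Emb\bA\cup\{u\}\rangle_{O_A}$.

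Combining the two displays, and using that $\Emb\bA\subseteq\End\bA\subseteq\Pol\bA$, that $u\in\End\bA\subseteq\Pol\bA$, and that $\epsilon\in\Pol^{(2)}\bA$, I obtain
\[ \Pol\bA = \langle\End\bA\cup\{\epsilon\}\rangle_{O_A} \subseteq \langle\Emb\bA\cup\{u,\epsilon\}\rangle_{O_A} \subseteq \Pol\bA, \]
forcing equality throughout. Hence $\{u,\epsilon\}$ generates $\Pol\bA$ modulo $\Emb\bA$, which yields $\rank(\Pol\bA:\Emb\bA)\le 2$. There is no real obstacle in this argument: all the weight rests on the factorization $h=u\circ\iota$ supplied by universality and on the already-proved Proposition~\ref{PolGen}; the only points demanding attention are that Definition~\ref{univhom} is genuinely being instantiated at the domain $\bA$ and that composing the two unary maps $u$ and $\iota$ stays inside the generated clone.
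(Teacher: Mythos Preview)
Your proof is correct and follows essentially the same approach as the paper: the paper's argument is simply ``from the definition of universality it follows immediately that $\End\bA$ is generated by $\Emb\bA$ together with $u$, and then the claim follows from Proposition~\ref{PolGen}.'' You have unpacked precisely this, spelling out the factorization $h=u\circ\iota$ via Definition~\ref{univhom} and the resulting chain of inclusions.
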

\begin{proof}
	From the definition of universality it follows immediately that $\End\bA$ is generated by $\Emb\bA$ together with $u$. With this observation, the claim follows directly from  Proposition~\ref{PolGen}.
\end{proof}

Now we can add up our previous observations to obtain:
\begin{theorem}\label{clgen}
	Let $\cC$ be a \Fraisse-class with \Fraisse-limit $\bU$, such that
	\begin{enumerate}
		\item $\cC$ is closed with respect to finite products,
		\item $\cC$ has the HAP,
		\item $\cC$ has the amalgamated extension property.
	\end{enumerate}
	Then $\rank(\Pol\bU:\Emb\bU)\le 2$. In particular, $\Pol\bU$ is generated by $\Emb\bU$ together with unary and a binary polymorphism.
\end{theorem}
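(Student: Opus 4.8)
The plan is to reduce the statement to Corollary~\ref{CorPolGen} applied to $\bU$. That corollary requires two ingredients about $\bU$: that $\bU$ possesses a universal endomorphism, and that there is a retraction $r\colon\bU\epito\bU^2$ (the latter being the hypothesis inherited from Proposition~\ref{PolGen}). The first ingredient is immediate: since $\cC$ has the HAP and the amalgamated extension property, Proposition~\ref{hapext} furnishes a universal homogeneous endomorphism $u$ of $\bU$, and in particular a universal endomorphism. So the real work is to produce the retraction onto $\bU^2$.

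First I would check that $\bU^2\in\overline\cC$, which is where hypothesis (1) enters. Any finitely generated substructure of $\bU^2$ is generated by finitely many pairs and hence embeds into $\bA\times\bB$ for suitable finitely generated $\bA,\bB\le\bU$; since $\cC$ is closed under finite products and hereditary, such a substructure lies in $\cC$, whence $\Age(\bU^2)\subseteq\cC$. Thus $\bU^2$ is a legitimate target $\bT$ in Theorem~\ref{uhretract}, and it suffices to verify conditions \eqref{cond1} and \eqref{cond2} of that theorem for $\bT=\bU^2$.

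For condition \eqref{cond2} I would exploit the product structure together with the fact that, by Dolinka's theorem invoked in Proposition~\ref{hapext}, the HAP makes $\bU$ homomorphism homogeneous and hence weakly homomorphism homogeneous. Given $\iota\colon\bA\injto\bB$ in $\cC$ and $h\colon\bA\to\bU^2$, write $h=\langle\pi_1\circ h,\pi_2\circ h\rangle$ with $\pi_i\colon\bU^2\to\bU$ the projections; extend each component $\pi_i\circ h\colon\bA\to\bU$ along $\iota$ to some $\hat h_i\colon\bB\to\bU$ using weak homomorphism homogeneity, and set $\hat h:=\langle\hat h_1,\hat h_2\rangle\colon\bB\to\bU^2$. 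Then $\hat h\circ\iota=h$, which is exactly \eqref{cond2}. Condition \eqref{cond1} then comes for free from Lemma~\ref{amalgextred}, since $\cC$ has the amalgamated extension property. Theorem~\ref{uhretract} now yields a universal homogeneous retraction $r\colon\bU\epito\bU^2$; being a retraction it admits a section $\epsilon\colon\bU^2\injto\bU$ with $r\circ\epsilon=1_{\bU^2}$.

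With the universal endomorphism $u$ and the retraction $r\colon\bU\epito\bU^2$ in hand, Corollary~\ref{CorPolGen} applies verbatim with $\bA=\bU$ and gives $\rank(\Pol\bU:\Emb\bU)\le2$, exhibiting $\Pol\bU$ as generated by $\Emb\bU$ together with the unary polymorphism $u$ and the binary polymorphism $\epsilon$. I expect the only genuinely non-formal point to be establishing that $\bU^2$ is a retract of $\bU$; once the componentwise extension argument for \eqref{cond2} is set up, everything else is an assembly of the quoted results.
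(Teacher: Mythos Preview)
Your proposal is correct and follows essentially the same route as the paper's proof: verify $\Age(\bU^2)\subseteq\cC$ from closure under products, use the HAP (via weak homomorphism homogeneity) to check condition~\eqref{cond2} of Theorem~\ref{uhretract} componentwise, invoke Lemma~\ref{amalgextred} for condition~\eqref{cond1}, obtain the retraction $r:\bU\epito\bU^2$, get the universal homogeneous endomorphism from Proposition~\ref{hapext}, and conclude via Corollary~\ref{CorPolGen}. The only difference is the order in which you present the two ingredients, which is immaterial.
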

\begin{proof}
	Since $\cC$ is closed with respect to finite products, it follows that $\Age(\bU^2)\subseteq\cC$. 
		
	Since $\cC$ has the HAP, it follows that $\bU$ is homomorphism homogeneous and thus also weakly homomorphism homogeneous. Let now $\bA\in\cC$, $h:\bA\to\bU^2$, $\iota:\bA\injto\bB$. Then we can write $h=\langle h_1,h_2\rangle$, where $h_1,h_2:\bA\to\bU$. Since $\bU$ is weakly homomorphism homogeneous, there exist $\hat{h}_1,\hat{h}_2:\bB\to\bU$, such that $\hat{h}_1\circ\iota=h_1$ and $\hat{h}_2\circ\iota=h_2$. With $\hat{h}:=\langle \hat{h}_1,\hat{h}_2\rangle$ this amounts to saying $h=\hat{h}\circ \iota$. Thus, condition Condition~\eqref{cond2} of Theorem~\ref{uhretract} is fulfilled.
	
	Since $\cC$ has the amalgamated extension property, by Lemma~\ref{amalgextred} it follows that also Condition~\eqref{cond1} of Theorem~\ref{uhretract} is fulfilled for $\bT=\bU^2$. 
	Thus, by Theorem~\ref{uhretract}, there exists a universal homogeneous retraction $r$ from $\bU$ to $\bU^2$. Let $\epsilon:\bU^2\injto\bU$ be a section for $r$ 
	
	Using again that $\cC$ has the HAP and the amalgamated extension property, from Proposition~\ref{hapext} it follows that $\bU$ has a universal homogeneous endomorphism $u$. Thus, by Corollary~\ref{CorPolGen}, $\Pol\bU$ is generated by $\Emb\bU$ together with $u$ and $\epsilon$. 
	
	Thus $\rank(\Pol\bU:\Emb\bU)\le 2$. 
\end{proof}

\begin{example}\label{expoly}
	The ages of all the structures from Example~\ref{exretracts} have the HAP and the amalgamated extension property. The only structure from there whose age is not closed with respect to finite products is $(\bQ,\le)$. Hence the polymorphism clones of the following structures  all have relative rank $\le 2$ modulo the respective self-embedding monoids: 
	\begin{itemize}
		\item the Rado graph $R$,
		\item the countable generic poset $\mathbb{P}=(P,\le)$,
		\item the countable atomless Boolean algebra $\mathbb{B}$,
		\item the countable universal homogeneous semilattice $\Omega$,
		\item the countable universal homogeneous distributive lattice $\mathbb{D}$,
		\item the infinite-dimensional vector-space $\mathbb{F}^\omega$ for any countable field $\mathbb{F}$, 
		\item the rational Urysohn space $\mathbb{U}_{\mathbb{Q}}$,
		\item the rational Urysohn sphere of radius $1$. 
	\end{itemize}
\end{example}

\section{Cofinality of polymorphism clones of homogeneous structures}\label{cofinality}
The \emph{cofinality} of a non-finitely generated algebraic structure $\mathbb{A}$ is the least cardinal $\lambda$ for which there exists an  increasing chain $(\mathbb{A}_i)_{i<\lambda}$ of proper substructures of $\mathbb{A}$ with the property that $\mathbb{A}=\bigcup_{i<\lambda}\mathbb{A}_i$. It is denoted by $\cf(\mathbb{A})$. 
This important invariant has been studied extensively for groups, but also for semigroups, boolean algebras, semilattices and other general algebraic structures. 

The goal of this section is to describe a class of homogeneous structures whose polymorphism clones have uncountable cofinality. 

First we observe that finitely generated clones have no cofinality. 
Moreover, non-finitely generated clones that are generated by a countably infinite set of functions always have cofinality $\aleph_0$. Thus the concept of cofinality becomes interesting only for very large clones. 

If $\uF$ is a clone on a set $A$, then we can define a sequence $(\uF_i)_{i<\aleph_0}$ of subclones of $\uF$ by $\uF_i:=\langle \uF^{(i)}\rangle_{O_A}$. If each of the $\uF_i$ is a proper subclone of $\uF$, then it follows that $\uF$ has countable cofinality. Thus, any clone with uncountable cofinality has to be generated by its $k$-ary part for some $k$.

The following proposition links the concept of cofinality for semigroups with the one for clones: 
\begin{proposition}\label{cfcond}
	Let $\uF$ be a clone on a set $A$, and let $\uS\subseteq \uF^{(1)}$ be a transformation semigroup. If $\cf(\uS)>\aleph_0$, and if $\uF$ has finite rank modulo $\uS$, then $\cf(\uF)>\aleph_0$, too. 
\end{proposition}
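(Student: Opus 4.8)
The plan is to argue by contradiction, transferring a hypothetical countable cofinal chain for $\uF$ down to the semigroup $\uS$. Suppose $\cf(\uF)=\aleph_0$; since the cofinality of any algebraic structure is at least $\aleph_0$ (a finite increasing chain with union $\uF$ would have its top term equal to $\uF$, hence not proper), this is exactly the negation of the desired conclusion. Then there is an increasing chain $(\uF_i)_{i<\aleph_0}$ of proper subclones of $\uF$ with $\bigcup_{i<\aleph_0}\uF_i=\uF$. The idea is that this chain should induce a countable cofinal chain of proper subsemigroups of $\uS$, contradicting $\cf(\uS)>\aleph_0$ --- unless the finitely many extra generators force the chain to stabilize at $\uF$ itself, which contradicts properness.

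First I would set $\uS_i:=\uS\cap\uF_i^{(1)}$ for each $i<\aleph_0$. I would check that each $\uS_i$ is a subsemigroup of $\uS$: both $\uS$ and the unary part $\uF_i^{(1)}$ of the clone $\uF_i$ are closed under composition of unary functions, hence so is their intersection. Clearly $(\uS_i)_{i<\aleph_0}$ is increasing, and since every $s\in\uS$ is unary and lies in some $\uF_i$, we have $\bigcup_{i<\aleph_0}\uS_i=\uS$. Because $\cf(\uS)>\aleph_0$, this countable increasing chain of subsemigroups cannot consist entirely of proper subsemigroups (otherwise it would witness $\cf(\uS)\le\aleph_0$); hence there is an index $n_0$ with $\uS_{n_0}=\uS$, i.e.\ $\uS\subseteq\uF_{n_0}$.

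Next I would use the finiteness of the rank. Since $\rank(\uF:\uS)$ is finite, there is a finite set $N=\{g_1,\dots,g_k\}\subseteq\uF$ with $\langle\uS\cup N\rangle_{O_A}=\uF$. As $N$ is finite and the $\uF_i$ form an increasing chain covering $\uF$, there is an index $n_1\ge n_0$ with $N\subseteq\uF_{n_1}$. Then $\uS\cup N\subseteq\uF_{n_1}$, and since $\uF_{n_1}$ is a clone it contains $\langle\uS\cup N\rangle_{O_A}=\uF$. This forces $\uF_{n_1}=\uF$, contradicting the fact that every member of the chain is a proper subclone. Hence $\cf(\uF)>\aleph_0$.

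The argument is essentially direct rather than deep; the only points requiring care are the book-keeping around the definition of cofinality (verifying that $\cf(\uF)=\aleph_0$ is indeed the negation of ``$\cf(\uF)>\aleph_0$'', and that an all-proper countable chain with union $\uS$ would genuinely witness $\cf(\uS)\le\aleph_0$) and the observation that intersecting a subclone's unary part with $\uS$ yields a subsemigroup. The main conceptual step --- the only place the two hypotheses are actually combined --- is the stabilization at $n_0$ coming from $\cf(\uS)>\aleph_0$, followed by absorbing the finitely many extra generators at $n_1$ using finite relative rank.
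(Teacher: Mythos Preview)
Your proof is correct and follows essentially the same argument as the paper: assume a countable cofinal chain $(\uF_i)$ of proper subclones, intersect with $\uS$ to force $\uS\subseteq\uF_{n_0}$ for some $n_0$ using $\cf(\uS)>\aleph_0$, then absorb the finite generating set $N$ into some $\uF_{n_1}$ to obtain $\uF_{n_1}=\uF$, a contradiction. The only cosmetic difference is that the paper takes $k_0=\max\{n_0,n_1\}$ rather than choosing $n_1\ge n_0$ directly.
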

\begin{proof}
	Suppose that $\cf(\uF)=\aleph_0$ and let $(\uF_i)_{i\in\bN}$ be a cofinal chain for $\uF$.  Define $\uS_i:=\uS\cap \uF_i$ ($i\in\bN$).  Then $\uS_i$ is a transformation semigroup and we have $\bigcup_{i\in\bN} \uS_i =\uS$. Since $\uS$ has uncountable cofinality, there has to exist some $i_0\in\bN$, such that $\uS_{i_0}=\uS$. 
	
	Since $\uF$ has a finite rank modulo $\uS$, there exists a finite set $H\subset\uF$, such that $\uS\cup H$ generates $\uF$. Since $(\uF_i)_{i\in\bN}$ is cofinal, it follows that there exists some $j_0\in\bN$ such that $H\subseteq \uF_{j_0}$. Let $k_0=\max\{i_0,j_0\}$. Then $\uS\subseteq \uF_{k_0}$ and $H\subseteq \uF_{k_0}$. Hence $\uF_{k_0}=\uF$ --- contradiction. It follows that $\uF$ has uncountable cofinality.    
\end{proof}

Dolinka, in \cite{Dol11}, gave sufficient conditions for the endomorphism monoid of a homogeneous structure to have uncountable cofinality. The result given there is stronger than required by us at this point because it proves uncountable \emph{strong} cofinality. We will meet this concept again later on. For now it is enough to know that uncountable strong cofinality implies uncountable cofinality.
\begin{theorem}{\cite[Thm.4.2]{Dol11},\cite[Lem.2.4]{MalMitRus09}}\label{dol}
	Let $\cC$ be a strict \Fraisse-class that has the (HAP). Let $\bU$ be the \Fraisse-limit of $\cC$. If the coproduct of $\aleph_0$ copies of $\bU$ exists in $(\overline{\cC},\to)$, 
	and if $\End\bU$ is not finitely generated, then $\End\bU$ has uncountable strong cofinality.
\end{theorem}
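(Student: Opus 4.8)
The plan is to verify the \emph{strong distortion} criterion of Maltcev, Mitchell and Ru\v{s}kuc (\cite[Lem.2.4]{MalMitRus09}) for the monoid $S:=\End\bU$; by that criterion (together with the fact, recalled in the introduction, that for semigroups uncountable strong cofinality is uncountable cofinality plus the Bergman property), it suffices to produce a \emph{fixed} countable family $(\iota_n)_{n\in\bN}$ in $\End\bU$ such that for \emph{every} sequence $(\phi_n)_{n\in\bN}$ in $\End\bU$ there is a single $\psi\in\End\bU$ with $\phi_n=\psi\circ\iota_n$ for all $n$. Granting such a simultaneous factorization, each $\phi_n$ is a word of length $2$ in $\{\iota_0,\dots,\iota_n\}\cup\{\psi\}$, which is exactly a strong distortion datum (length bound $2$, generator-index bound $n$); the hypothesis that $\End\bU$ is not finitely generated then guarantees that strong cofinality is defined and genuinely uncountable.

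The family $(\iota_n)$ comes from the coproduct. Let $\bV:=\coprod_{n\in\bN}\bU$ be the coproduct of $\aleph_0$ copies of $\bU$ in $(\overline{\cC},\to)$, which exists by hypothesis, with coprojections $\kappa_n:\bU\to\bV$. Since $\bV$ is an object of $(\overline{\cC},\to)$, it is countable and lies in $\overline{\cC}$, so by universality of $\bU$ for $\overline{\cC}$ there is an embedding $e:\bV\injto\bU$; put $\iota_n:=e\circ\kappa_n\in\End\bU$. Given any $(\phi_n)$ in $\End\bU$, the universal property of the coproduct yields a unique homomorphism $\Phi:\bV\to\bU$ with $\Phi\circ\kappa_n=\phi_n$. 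As $e$ is an embedding, $\Phi\circ e^{-1}:e(\bV)\to\bU$ is a well-defined homomorphism from the substructure $e(\bV)\le\bU$. The key step is to extend it to $\psi\in\End\bU$ with $\psi\restr_{e(\bV)}=\Phi\circ e^{-1}$; then $\psi\circ\iota_n=\psi\circ e\circ\kappa_n=\Phi\circ\kappa_n=\phi_n$, as wanted.

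For the extension I would use that, by the HAP, $\bU$ is homomorphism homogeneous (the result of Dolinka recalled before Proposition~\ref{hapext}), hence weakly homomorphism homogeneous. Enumerating $\bU=\{u_0,u_1,\dots\}$, one extends $\Phi\circ e^{-1}$ one point at a time: at each stage the partial homomorphism is defined on $e(\bV)$ together with finitely many added points, and weak homomorphism homogeneity, together with the strict amalgamation available in $\cC$, lets one pick a consistent image for the next point inside $\bU$. The limit of this ``forth'' construction is $\psi$. I expect this \emph{infinite-domain} extension to be the main obstacle: the statement of homomorphism homogeneity only extends maps between finitely generated substructures, so one must check that the one-point-at-a-time extension never gets stuck over the infinite base $e(\bV)$; this is precisely where strictness of $\cC$ and the countability of $\bU$ are used.

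Finally, the passage from the factorization to $\scf(\End\bU)>\aleph_0$ is the content of the cited semigroup lemma, which I would invoke rather than reprove. For completeness, the mechanism is: if $\End\bU=\bigcup_{m\in\bN}V_m$ with $V_m\subseteq V_{m+1}$, $V_mV_m\subseteq V_{m+1}$ and every $V_m$ proper, then each fixed $\iota_n$ lies in some $V_{g(n)}$, with $g$ depending only on the chain. Choosing $s_n\in\End\bU\setminus V_{g(n)+n+1}$ (possible by properness) and applying the factorization to $(s_n)$ gives $\psi\in V_{m_0}$; then $s_n=\psi\circ\iota_n\in V_{m_0}V_{g(n)}\subseteq V_{\max(m_0,g(n))+1}\subseteq V_{g(n)+n+1}$ for all $n\ge m_0$, contradicting the choice of $s_n$. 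Hence no such proper chain exists, so $\End\bU$ has uncountable strong cofinality.
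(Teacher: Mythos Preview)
The paper does not prove this statement itself; it is quoted from Dolinka \cite{Dol11} together with the semigroup criterion \cite[Lem.~2.4]{MalMitRus09}, so there is no in-paper argument to compare against and your proposal has to stand on its own.

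Your overall plan is the right one, and the first, second and last paragraphs are correct: the strong-distortion criterion you state is the content of \cite[Lem.~2.4]{MalMitRus09}, the coproduct $\bV=\coprod_n\bU$ together with an embedding $e\colon\bV\injto\bU$ is the natural source of the fixed family $(\iota_n)$, and your final chain argument correctly derives $\scf(\End\bU)>\aleph_0$ from the factorisation.

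The problem is exactly where you flag it, and it is a genuine gap rather than a routine detail. Homomorphism homogeneity (equivalently, the HAP) only lets you extend homomorphisms along embeddings between \emph{finitely generated} structures; it does not let you extend a homomorphism off the infinite base $e(\bV)$. Your forth-construction breaks because when you add a new point $u\notin e(\bV)$, the value $\psi(u)$ must respect the relations between $u$ and \emph{every} point of $e(\bV)$, which is an infinite system of constraints that need not be satisfiable in $\bU$. This already fails in the Rado graph $R$: for $v\in R$ one has $N(v)\cong R$, so there is a surjective homomorphism $h\colon N(v)\to R$; no endomorphism of $R$ extends $h$, since $\psi(v)$ would have to be adjacent to every vertex of $R$. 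For any fixed embedding $e$, an adversarial choice of $(\phi_n)$ reproduces exactly this obstruction at some $v\notin e(\bV)$: in $R$ almost every such $v$ has infinitely many neighbours inside $e(\bV)$, and on that infinite set one can arrange $\Phi\circ e^{-1}$ to have image all of $R$. So ``weak homomorphism homogeneity together with strict amalgamation'' does not produce $\psi$ as written; the argument needs to be reorganised so that at every stage only a \emph{finite} HAP/amalgamation instance in $\cC$ is solved, rather than fixing $e$ first and attempting an infinite extension afterwards.
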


Combining this result with our observations from Section~\ref{genclones}, we obtain:
\begin{theorem}\label{clcf}
	Let $\cC$ be a strict \Fraisse-class with \Fraisse-limit $\bU$, such that
	\begin{enumerate}
		\item $\cC$ is closed with respect to finite products,
		\item $\cC$ has the HAP,
		\item the coproduct of $\aleph_0$ copies of $\bU$ exists in $(\overline{\cC},\to)$,
		\item $\End\bU$ is not finitely generated.  
	\end{enumerate}
	Then $\Pol\bU$ has uncountable cofinality.
\end{theorem}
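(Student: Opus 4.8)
The plan is to assemble the statement from three earlier results that fit together almost mechanically: Theorem~\ref{clgen} supplies a finite relative rank of $\Pol\bU$ over a unary subsemigroup, Theorem~\ref{dol} supplies uncountable cofinality of that subsemigroup, and Proposition~\ref{cfcond} lifts uncountable cofinality from the unary level to the whole clone. So the task is really just to check that the hypotheses line up.

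First I would verify the hypotheses of Theorem~\ref{clgen}. By assumption $\cC$ is closed under finite products and has the HAP. Since $\cC$ is a \emph{strict} \Fraisse-class, it has the amalgamated extension property (as observed in the text right after the definition of that property). Hence Theorem~\ref{clgen} applies and gives $\rank(\Pol\bU:\Emb\bU)\le 2$; in particular $\Pol\bU$ has finite rank modulo $\Emb\bU$. I would then upgrade this to finite rank modulo $\End\bU$: since $\Emb\bU\subseteq\End\bU$, any set generating $\Pol\bU$ modulo $\Emb\bU$ also generates it modulo $\End\bU$, so relative rank is monotone (decreasing) in the base and $\rank(\Pol\bU:\End\bU)\le\rank(\Pol\bU:\Emb\bU)\le 2$.

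Next I would record that $\End\bU$ has uncountable cofinality. The class $\cC$ is a strict \Fraisse-class with the HAP, the coproduct of $\aleph_0$ copies of $\bU$ exists in $(\overline{\cC},\to)$, and $\End\bU$ is not finitely generated; thus Theorem~\ref{dol} yields that $\End\bU$ has uncountable strong cofinality, and since uncountable strong cofinality implies uncountable cofinality we get $\cf(\End\bU)>\aleph_0$.

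Finally I would apply Proposition~\ref{cfcond} with $\uF:=\Pol\bU$ and $\uS:=\End\bU$. The unary part $(\Pol\bU)^{(1)}$ is exactly $\End\bU$, which is a transformation semigroup on $U$, so $\uS\subseteq\uF^{(1)}$; the two remaining hypotheses, namely $\cf(\uS)>\aleph_0$ and that $\uF$ has finite rank modulo $\uS$, were just established. Proposition~\ref{cfcond} then delivers $\cf(\Pol\bU)>\aleph_0$. There is no deep obstacle here: the argument is bookkeeping that aligns the cited theorems. The only step needing a moment's care is the passage from $\Emb\bU$ to $\End\bU$, which rests on the monotonicity of relative rank in its base; everything else is a direct citation.
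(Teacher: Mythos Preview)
Your proposal is correct and follows essentially the same route as the paper's own proof: invoke Theorem~\ref{clgen} (using that a strict \Fraisse-class has the amalgamated extension property) to get finite relative rank over $\Emb\bU$ and hence over $\End\bU$, cite Theorem~\ref{dol} for $\cf(\End\bU)>\aleph_0$, and then apply Proposition~\ref{cfcond}. Your write-up is slightly more explicit about the monotonicity step from $\Emb\bU$ to $\End\bU$, but otherwise the arguments coincide.
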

\begin{proof}
	From Theorem~\ref{clgen} it follows that $\rank(\Pol\bU:\Emb\bU)$ is finite. Hence, also $\rank(\Pol\bU:\End\bU)$ is finite. From Theorem~\ref{dol}, it follows that $\End\bU$ has uncountable cofinality. Applying Proposition~\ref{cfcond}, we find that $\Pol\bU$ has uncountable cofinality. 
\end{proof}

\begin{example}\label{excf}
	It is easy to see that each of the structures from Example~\ref{expoly}, except for the rational Urysohn-space and the rational Urysohn-sphere, fulfills the conditions of Theorem~\ref{clcf}. Hence their polymorphism clones all have uncountable cofinality.  
	
	The clone $O_A$ for a countably infinite set $A$ has uncountable cofinality, too, since it is the polymorphism clone of the structure with base set $A$ over the empty signature, and since this structure fulfills the conditions of Theorem~\ref{clcf}.   
\end{example}

\section{The Bergman property for polymorphism clones of homogeneous structures}\label{sBergman}

Recall that a group $G$ is said to have the \emph{Bergman property} if for every generating set $H$ of $G$ there exists a natural number $k$ such that every other element of $G$ can be represented by a group-word of length at most $k$ in the generators from $H$. Another way to put this is that every connected Cayley graph of $G$ has a finite diameter. This concept originates from \cite{Ber06} where this property is proved for the symmetric groups on infinite sets.

Meanwhile the Bergman property has been studied intensively for groups and semigroups. 

One possibility to define the Bergman property for clones would be to bound the depth of a term needed for the generation of a given element from a given generating set of the clone. However,  such a naive definition leads to trouble since the minimum depth of a term that defines a given function may not only depend on the generating set but also on the arity of the function in question. For example,  by Sierpi\'nski's Theorem we know that the clone $O_A$ is generated by its binary part. However, every $k$-ary function on $A$ with $k$ non-fictitious variables needs a term of depth at least $\log_2(k)$ to be generated from binary functions. Therefore we prefer to give the following definition that takes into account the arity of functions:
\begin{definition}
	A clone $\uF$ is said to have the \emph{Bergman-property} if for every generating set $H$ of $\uF$ and every $k\in\bN\setminus\{0\}$ there exists some $n\in\bN$ such that every $k$-ary function from $F$ can be represented by a term of depth at most $n$ from the functions in $H$.
\end{definition}

Let $A$ be a set.  We define a complex product on $O_A$ according to 
\[ U_1\cdot U_2 = \{f\circ\langle g_1,\dots,g_n\rangle\mid f\in U_1^{(n)}, g_1,\dots,g_n\in U_2^{(m)},\, n,m\in\bN\setminus\{0\}\}.\]
\begin{lemma}
	The complex product defines a semigroup operation on the power set of $O_A$.\qed 
\end{lemma}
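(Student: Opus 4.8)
The plan is to verify the two semigroup axioms in turn: that $\cdot$ is a total binary operation on the power set of $O_A$, and that it is associative. Totality is immediate, since for any $U_1,U_2\subseteq O_A$ every element $f\circ\langle g_1,\dots,g_n\rangle$ produced in $U_1\cdot U_2$ is again a well-defined finitary operation on $A$; hence $U_1\cdot U_2\subseteq O_A$, so $U_1\cdot U_2$ is once more a member of the power set of $O_A$. Thus the entire content of the lemma is the associativity identity $(U_1\cdot U_2)\cdot U_3 = U_1\cdot(U_2\cdot U_3)$, which I would prove by establishing the two inclusions separately.

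First I would reduce matters to the classical \emph{superassociativity} law for the superposition of single operations: whenever $f$ is $n$-ary, $g_1,\dots,g_n$ are $m$-ary, and $h_1,\dots,h_m$ are $p$-ary, one has
\[ (f\circ\langle g_1,\dots,g_n\rangle)\circ\langle h_1,\dots,h_m\rangle = f\circ\langle g_1\circ\langle h_1,\dots,h_m\rangle,\dots,g_n\circ\langle h_1,\dots,h_m\rangle\rangle. \]
This identity yields the inclusion $(U_1\cdot U_2)\cdot U_3\subseteq U_1\cdot(U_2\cdot U_3)$ essentially for free: a left-bracketed element is exactly the left-hand side, with $f\in U_1^{(n)}$, $g_i\in U_2^{(m)}$ and $h_j\in U_3^{(p)}$, and the right-hand side displays it as $f\circ\langle q_1,\dots,q_n\rangle$ with each $q_i=g_i\circ\langle h_1,\dots,h_m\rangle\in(U_2\cdot U_3)^{(p)}$, whence the element lies in $U_1\cdot(U_2\cdot U_3)$.

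For the reverse inclusion I would start from a right-bracketed element $f\circ\langle q_1,\dots,q_n\rangle$ with $f\in U_1^{(n)}$ and $q_1,\dots,q_n\in(U_2\cdot U_3)^{(\ell)}$, and unfold each factor as $q_i=g_i\circ\langle h^{(i)}_1,\dots,h^{(i)}_{m_i}\rangle$ with $g_i\in U_2^{(m_i)}$ and $h^{(i)}_j\in U_3^{(\ell)}$. To recognise this as a left-bracketed product one must present it as a \emph{single} middle layer of $U_2$-operations, all of one common arity, applied to a \emph{single} common tuple of $U_3$-operations, and only then run the superassociativity identity backwards.

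The hard part will be exactly this arity-matching step. The right-bracketed form permits the individual $q_i$ to be assembled from $U_2$-operations $g_i$ of mutually different arities $m_i$ and from different inner tuples of $U_3$-operations, whereas the identity above can be reversed only when all the $g_i$ share one arity $m$ and one tuple $\langle h_1,\dots,h_m\rangle$. Reconciling the two would require padding the $g_i$ to a common arity by inserting fictitious variables and merging the inner tuples, and the delicate point is that this padding must keep the operations inside $U_2$ and $U_3$. I would therefore treat the reverse inclusion, rather than totality or the left-to-right inclusion, as the crux of the argument, and I would expect the clean proof to proceed by isolating precisely the closure needed to carry out this uniformisation (for instance the availability of the projections $J_A$ inside the factors, as is automatic when the $U_i$ are clones).
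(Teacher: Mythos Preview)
Your diagnosis is correct, and you have in fact put your finger on a genuine defect in the lemma as stated. The paper offers no proof (the statement is simply marked \qed), so there is nothing to compare against; but your suspicion that the reverse inclusion $U_1\cdot(U_2\cdot U_3)\subseteq(U_1\cdot U_2)\cdot U_3$ needs extra hypotheses is well founded: the complex product as defined is \emph{not} associative on arbitrary subsets of $O_A$.

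Here is a concrete failure on $A=\{0,1\}$. Take $U_1=\{f\}$ with $f(x,y)=x\oplus y$, take $U_2=\{g_1,g_2\}$ with $g_1(x)=0$ unary and $g_2(x,y)=x$ binary, and take $U_3=\{\mathrm{id}\}$ with $\mathrm{id}$ the unary identity. Then $U_2\cdot U_3=\{c_0,\mathrm{id}\}$ (unary constant~$0$ and identity), so $U_1\cdot(U_2\cdot U_3)$ contains $f\circ\langle c_0,\mathrm{id}\rangle=\mathrm{id}$. On the other hand, in forming $U_1\cdot U_2$ the definition forces both inner arguments of $f$ to come from the same $U_2^{(m)}$; since $U_2$ contains exactly one function of each arity, both inner arguments must coincide, and one checks that $U_1\cdot U_2$ consists only of constant-$0$ functions. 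Hence $(U_1\cdot U_2)\cdot U_3=\{c_0\}$, and the two bracketings differ precisely by the arity-matching obstruction you isolate.

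Fortunately the paper's applications are unaffected. The only place the lemma is invoked is in the inductive step of Lemma~\ref{strong}, where the displayed equality $(U\cdot U^{[l,i]})\cdot U^{[k,m]}=U\cdot(U^{[l,i]}\cdot U^{[k,m]})$ is used merely to obtain the final inclusion $\subseteq U^{[k,i+1+m]}$; for that, the inclusion $(U\cdot U^{[l,i]})\cdot U^{[k,m]}\subseteq U\cdot(U^{[l,i]}\cdot U^{[k,m]})$ suffices, and this is exactly the superassociativity direction you proved cleanly. So your plan captures the honest mathematical content of what is actually needed, and your caveat about requiring projections in the factors for the reverse inclusion is the right explanation of why full associativity fails.
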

Let now $U\subseteq O_A$. For $k\in\bN\setminus\{0\}$, we define inductively:
\[ U^{[k,0]}:= J_A^{(k)},\qquad U^{[k,i+1]}:= U\cdot U^{[k,i]}\cup U^{[k,i]}\]
Note that if $\uF=\langle U\rangle_{O_A}$, then $U^{[k,i]}$ is the set of all functions from $\uF^{(k)}$ that can be generated from elements of $U$ by terms of depth $\le i$. Hence, a clone $\uF$ has the Bergman property if and only if for every generating set $H$ of $\uF$ end for every $k\in\bN\setminus\{0\}$ there exists some $n\in\bN$ such that $H^{[k,n]}=\uF^{(k)}$.

In general, for a clone $\uF=\langle H\rangle_{O_A}$, and for some $k\in\bN\setminus\{0\}$ we call $\uF$ \emph{$k$-Cayley-bounded with respect to $H$} if there exists some $n\in\bN$ such that $H^{[k,n]}=F^{(k)}$. Thus, $\uF$ has the Bergman property if and only if for every $k\in\bN\setminus\{0\}$ and for every generating set $H$ of $\uF$ we have that $\uF$ is $k$-Cayley-bounded with respect to $H$.
\begin{lemma}
	For every $i\in\bN$, $k\in\bN\setminus\{0\}$ we have $U^{[k,i]}\subseteq U^{[k,i+1]}$. \qed
\end{lemma}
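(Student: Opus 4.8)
The plan is to read the inclusion straight off the recursive definition, so that no induction on $i$ is needed at all. By definition, for every $k\in\bN\setminus\{0\}$ and every $i\in\bN$ we have
\[ U^{[k,i+1]} = U\cdot U^{[k,i]}\,\cup\, U^{[k,i]}. \]
The right-hand side is a union, one of whose two terms is $U^{[k,i]}$ itself; hence $U^{[k,i]}\subseteq U^{[k,i+1]}$ holds at once. Put differently, the summand ``$\cup\,U^{[k,i]}$'' in the recursion was built in precisely to force the sequence $\bigl(U^{[k,i]}\bigr)_{i\in\bN}$ to be increasing, so that monotonicity is a feature of the definition rather than something that has to be established.

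The only bookkeeping worth spelling out is the base instance $i=0$, which the statement formally includes: there the claim reads $U^{[k,0]}\subseteq U^{[k,1]}$, and the same one-line unfolding gives $U^{[k,1]} = U\cdot U^{[k,0]}\cup U^{[k,0]}\supseteq U^{[k,0]} = J_A^{(k)}$. I therefore do not expect any genuine obstacle here; the lemma is an immediate consequence of how $U^{[k,i]}$ was defined, which is why no separate proof accompanies it in the text.

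I expect this lemma to serve purely as a preparatory bookkeeping step for the Bergman-property analysis: it guarantees that the sets $H^{[k,i]}$ of $k$-ary functions obtainable from a generating set $H$ by terms of depth at most $i$ grow with $i$, so that the natural boundedness condition is the existence of some $n$ with $H^{[k,n]}=\uF^{(k)}$, i.e.\ $k$-Cayley-boundedness with respect to $H$. No depth estimate or combinatorial counting enters at this stage; the entire content is the monotone structure of the defining recursion.
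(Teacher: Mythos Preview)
Your proposal is correct and matches the paper's approach: the lemma is stated with a \qed and no proof precisely because the inclusion is immediate from the recursive definition $U^{[k,i+1]}=U\cdot U^{[k,i]}\cup U^{[k,i]}$, exactly as you observe.
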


\begin{lemma}\label{strong}
	For every $U\subseteq \uF$ and for all $i,j\in\bN$, $k,l\in\bN\setminus\{0\}$ we have $U^{[l,n]}\cdot U^{[k,m]}\subseteq U^{[k,n+m]}$.
\end{lemma}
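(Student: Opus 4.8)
The plan is to fix $U$, $k$, $m$, and $l$, and to induct on $n$, peeling the outermost layer off the left-hand factor $U^{[l,n]}$. Throughout I would lean on three facts. First, the complex product is associative; this is precisely the content of the preceding lemma asserting that it is a semigroup operation on the power set of $O_A$, and it ultimately rests on associativity of the substitution $f\circ\langle g_1,\dots,g_n\rangle$. Second, the product distributes over unions in its left argument, $(X\cup Y)\cdot Z = (X\cdot Z)\cup(Y\cdot Z)$, which is immediate from the definition since $(X\cup Y)^{(p)} = X^{(p)}\cup Y^{(p)}$. Third, the product is monotone, so $X\subseteq Y$ gives $U\cdot X\subseteq U\cdot Y$, and we have the inclusion $U^{[k,i]}\subseteq U^{[k,i+1]}$ from the previous lemma.

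For the base case $n=0$, note that $U^{[l,0]} = J_A^{(l)}$ consists only of the $l$-ary projections. By the definition of the complex product, every element of $U^{[l,0]}\cdot U^{[k,m]}$ has the shape $e_i^l\circ\langle g_1,\dots,g_l\rangle$ with $g_1,\dots,g_l\in U^{[k,m]}$; here I use that all members of $U^{[k,m]}$ are $k$-ary, which is itself a trivial induction on the second index. But $e_i^l\circ\langle g_1,\dots,g_l\rangle = g_i\in U^{[k,m]} = U^{[k,0+m]}$, so the inclusion holds.

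For the inductive step, I would assume $U^{[l,n]}\cdot U^{[k,m]}\subseteq U^{[k,n+m]}$ and expand $U^{[l,n+1]} = U\cdot U^{[l,n]}\cup U^{[l,n]}$. Distributing the product over this union yields $U^{[l,n+1]}\cdot U^{[k,m]} = (U\cdot U^{[l,n]})\cdot U^{[k,m]}\cup (U^{[l,n]}\cdot U^{[k,m]})$. The second summand lies in $U^{[k,n+m]}\subseteq U^{[k,n+m+1]}$ by the induction hypothesis together with monotonicity. For the first summand, associativity lets me regroup it as $U\cdot(U^{[l,n]}\cdot U^{[k,m]})$; by the induction hypothesis and monotonicity of the product this is contained in $U\cdot U^{[k,n+m]}$, which in turn lies in $U\cdot U^{[k,n+m]}\cup U^{[k,n+m]} = U^{[k,n+m+1]}$ by the very definition of $U^{[k,n+m+1]}$. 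Both summands thus land in $U^{[k,(n+1)+m]}$, closing the induction.

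I do not anticipate a genuine obstacle here: the argument is a routine induction, and the only points demanding care are the bookkeeping of arities in the base case (where the $l$-ary projection forces exactly $l$ inner functions) and the correct invocation of associativity of the complex product, which is where the combinatorial weight of the statement really sits.
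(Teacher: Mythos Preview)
Your proof is correct and follows essentially the same approach as the paper's own proof: both induct on $n$, handle the base case via $J_A^{(l)}\cdot U^{[k,m]}=U^{[k,m]}$, and in the inductive step distribute the complex product over the union $U\cdot U^{[l,n]}\cup U^{[l,n]}$, regroup using associativity, and apply the induction hypothesis together with monotonicity. Your write-up is simply a bit more explicit about where associativity, distributivity, and arity bookkeeping enter.
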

\begin{proof}
	We proceed by induction on $n$: Clearly, $U^{[l,0]}\cdot U^{[k,m]}= J_A^{(l)}\cdot U^{[k,m]}= U^{[k,m]}$. 
	
	Suppose we have that $U^{[l,i]}\cdot U^{[k,m]}\subseteq U^{[k,i+m]}$ for some $i$. Then 
	\begin{align*}
		U^{[l,i+1]}\cdot U^{[k,m]} &= (U\cdot U^{[l,i]}\cup U^{[l,i]})\cdot U^{[k,m]}\\
		&= (U\cdot U^{[l,i]})\cdot U^{[k,m]}\cup U^{[l,i]}\cdot U^{[k,m]}\\
		&= U\cdot (U^{[l,i]}\cdot U^{[k,m]})\cup U^{[l,i]}\cdot U^{[k,m]}\\
		&\subseteq U\cdot U^{[k,i+m]}\cup U^{[k,i+m]}= U^{[k,i+1+m]}.
	\end{align*}
\end{proof}

According to \cite{DroGoe05}  the strong cofinality of a group $G$ is the least cardinal $\lambda$ such that there exists a chain $(G_i)_{i<\lambda}$ of proper subsets of $G$ whose union is equal to $G$ such that
\begin{enumerate}
	\item for all $i<\lambda$ we have $G_i=G_i^{-1}$,
	\item for all $i<\lambda$ there exists some $j$ with $i\le j<\lambda$ such that $G_i^2\subseteq G_j$.
\end{enumerate}
It was shown in \cite{DroGoe05} that a group has uncountable strong cofinality if and only if it has uncountable cofinality and the Bergman property. 

The concept of strong cofinality was defined for semigroups in \cite{MalMitRus09}: Namely, the strong cofinality of a semigroup $S$ is the least cardinal $\lambda$ such that there exists a chain $(S_i)_{i<\lambda}$ of proper subsets of $S$ whose union is equal to $S$ such that for every $i$ there exists some $j$ such $S_i^2\subseteq S_j$. In the same paper it was also shown that a semigroup has uncountable strong cofinality if and only if it has uncountable cofinality and the Bergman property. 

Following is a definition of the strong cofinality for clones:
\begin{definition}\label{scfdef}
	For a clone $\uF$ and a cardinal $\lambda$, a chain $(U_i)_{i<\lambda}$ of proper subsets of $\uF$ is called \emph{strong cofinal chain} of length $\lambda$ for $\uF$ if 
	\begin{enumerate}
		\item $\bigcup_{i<\lambda} U_i = \uF$,
		\item there exists a $k_0\in\bN\setminus\{0\}$ such that for all $i<\lambda$ and $k\in\bN\setminus\{0\}$ with $k\ge k_0$ holds $U_i^{(k)}\subsetneq \uF^{(k)}$,
		\item \label{pntthree}for all $i<\lambda$  there exists some $j<\lambda$ such that for all $k\in\bN\setminus\{0\}$ holds $U_i^{[k,2]}\subseteq U_j$. 
	\end{enumerate}
	The \emph{strong cofinality} of $\uF$ is the least cardinal $\lambda$ such that there exists a strong cofinal chain of length $\lambda$ for $\uF$.  It will be denoted by $\scf(\uF)$.
\end{definition}
Clearly, the strong cofinality of a clone $\uF$ is a regular cardinal that is less than or equal $\cf(\uF)$.

\begin{lemma}\label{strong2}
	Let $(U_i)_{i<\lambda}$ be a strong cofinal chain for $\uF$. Then for every $i<\lambda$,   and for every $r\in\bN$, there exists some $j<\lambda$ such that for all  $k\in\bN\setminus\{0\}$ holds $U_i^{[k,r]}\subseteq U_j$. 
\end{lemma}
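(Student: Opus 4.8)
The plan is to induct on $r$, treating condition~\eqref{pntthree} of Definition~\ref{scfdef} as the single engine that gets fed back into itself at each step. Throughout I would use that the chain is increasing in its index, so that $U_i\subseteq U_{i'}$ whenever $i\le i'$; this lets me enlarge any witnessing index at will, and in particular assume that all witnesses produced are $\ge i$. The base case is $r=0$: condition~\eqref{pntthree} hands me, for the given $i$, a single $j$ with $U_i^{[k,2]}\subseteq U_j$ \emph{for all} $k$ simultaneously, and since $U_i^{[k,0]}=J_A^{(k)}\subseteq U_i^{[k,2]}$ straight from the definition, this $j$ works.

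For the inductive step $r\to r+1$, fix $i$. First I would apply the induction hypothesis to obtain one index $j\ge i$ with $U_i^{[k,r]}\subseteq U_j$ for all $k$, and then apply condition~\eqref{pntthree} to this $j$ to obtain $j'\ge j$ with $U_j^{[k,2]}\subseteq U_{j'}$ for all $k$. I claim $j'$ witnesses the statement for $r+1$. Unwinding $U_i^{[k,r+1]}=\big(U_i\cdot U_i^{[k,r]}\big)\cup U_i^{[k,r]}$, the second summand already lies in $U_j\subseteq U_{j'}$, so everything reduces to bounding a single composite $f\circ\langle g_1,\dots,g_p\rangle$ with $f\in U_i^{(p)}$ and $g_1,\dots,g_p\in U_i^{[k,r]}$. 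Here I would note that $f\in U_i^{(p)}\subseteq U_j^{(p)}\subseteq U_j^{[p,1]}$ and that each $g_\ell\in U_i^{[k,r]}\subseteq U_j^{(k)}\subseteq U_j^{[k,1]}$ (using $f=f\circ\langle e_1^p,\dots,e_p^p\rangle$ and likewise for the $g_\ell$ to land in depth $1$). Then Lemma~\ref{strong} gives $f\circ\langle g_1,\dots,g_p\rangle\in U_j^{[p,1]}\cdot U_j^{[k,1]}\subseteq U_j^{[k,2]}\subseteq U_{j'}$, so that $U_i\cdot U_i^{[k,r]}\subseteq U_{j'}$ and hence $U_i^{[k,r+1]}\subseteq U_{j'}$ for every $k$.

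The step I expect to be the main obstacle is ensuring that a single index $j$ (and then $j'$) works \emph{uniformly in $k$}: the definition of a strong cofinal chain is tailored precisely so that condition~\eqref{pntthree} is uniform over all arities, and the argument collapses if one accidentally produces an arity-dependent witness and then cannot merge them into one index. A secondary point worth emphasising is that Lemma~\ref{strong} is used only in its easy direction — products of shallow functions stay shallow — so no delicate ``compression'' of a deep term into few shallow layers is ever needed. Because the depth grows by the controlled amount $2$ at each step, one application of condition~\eqref{pntthree} per inductive step suffices, which is exactly what makes the whole induction go through.
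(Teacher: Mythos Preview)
Your argument is correct and follows essentially the same inductive scheme as the paper: both proofs push $U_i^{[k,r]}$ into some $U_m$ by induction, observe $U_i\cdot U_i^{[k,r]}\subseteq U_m\cdot U_m^{(k)}\subseteq U_m^{[k,2]}$, and then apply condition~\eqref{pntthree} once more. Your base case, routing $J_A^{(k)}\subseteq U_i^{[k,2]}$ through condition~\eqref{pntthree} to get a single $j$ uniform in $k$, is in fact cleaner than the paper's, which appeals to condition~(1) and is a bit casual about uniformity in $k$.
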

\begin{proof}
	Indeed, for $r=0$ we calculate $U_i^{[k,0]}=J_A^{(k)}$, and we can take $j$ to be the smallest ordinal such that $J_A^{(k)}\subseteq U_j$ (such an ordinal exists by Definition~\ref{scfdef}(1)). 
	
	Suppose, the claim is shown for some $r$. 
	Let $m<\lambda$ be given such that $U_i^{[k,r]}\subseteq U_m$ for all $k$ and such that $i\le m$.  By definition we have $U_i^{[k,r+1]}=U_i\cdot U_i^{[k,r]}\cup U_i^{[k,r]}$. We calculate
	\[
		U_i\cdot U_i^{[k,r]}\subseteq U_i\cdot U_m^{(k)}\subseteq U_m\cdot U_m^{(k)}\subseteq U_m^{[k,2]}\subseteq U_l,
	\] 
	for some $l$ that fulfills $i\le m\le l<\lambda$ and for all $k$. Hence, $U_i^{[k,r+1]}\subseteq U_l$, for all $k$.  
\end{proof}
\begin{proposition}\label{bergman} 
	Let $\uF$ be a non-finitely generated clone on a set $A$. Then $\scf(\uF)>\aleph_0$ if and only if $\uF$ has the Bergman-property and $\cf(\uF)>\aleph_0$. 
\end{proposition}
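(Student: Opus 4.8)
The plan is to follow the pattern of the corresponding theorems for groups \cite{DroGoe05} and semigroups \cite{MalMitRus09}, proving the two implications separately and in both cases arguing by contraposition through the explicit combinatorics of the operators $H^{[k,n]}$ recorded in Lemmata~\ref{strong} and~\ref{strong2}. Throughout, the standing non-finite-generation hypothesis is what rules out chains of finite length, so every chain we produce has length exactly $\aleph_0$.

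For the implication ``$\scf(\uF)>\aleph_0$ implies $\cf(\uF)>\aleph_0$ and the Bergman property'' the cofinality half is immediate from the remark following Definition~\ref{scfdef}, since $\aleph_0<\scf(\uF)\le\cf(\uF)$. For the Bergman property I would argue by contradiction: if it fails, fix a generating set $H$ and an arity $k$ with $H^{[k,n]}\subsetneq\uF^{(k)}$ for all $n\in\bN$, and set $U_n:=\bigcup_{l\in\bN\setminus\{0\}}H^{[l,n]}$. The claim is that $(U_n)_{n<\aleph_0}$ is a strong cofinal chain, which would force $\scf(\uF)\le\aleph_0$, a contradiction. Monotonicity and condition~(1) are clear because $H$ generates $\uF$. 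For condition~(3) I would compute, using Lemma~\ref{strong} together with $J_A^{(k)}=H^{[k,0]}$, that $U_n^{[k,1]}\subseteq H^{[k,n]}$ and hence $U_n^{[k,2]}\subseteq H^{[k,2n]}\subseteq U_{2n}$ for every $k$, so that $j=2n$ works uniformly in $k$.

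The delicate point, and the step I expect to be the main obstacle, is condition~(2): I must produce a single $k_0$ with $U_n^{(k)}\subsetneq\uF^{(k)}$ for all $n$ and all $k\ge k_0$, whereas the hypothesis only supplies failure at the one arity $k$. Here I would take $k_0:=k$ and prove the propagation $U_n^{(k)}\subsetneq\uF^{(k)}\Rightarrow U_n^{(k')}\subsetneq\uF^{(k')}$ for $k'\ge k$ by contraposition: given $g\in\uF^{(k)}$, pad it to $\tilde g:=g\circ\langle e_1^{k'},\dots,e_k^{k'}\rangle\in\uF^{(k')}$; if $U_n^{(k')}=\uF^{(k')}$ then $\tilde g\in H^{[k',n]}$, and collapsing the dummy variables by a substitution of $k$-ary projections, together with Lemma~\ref{strong} in the form $H^{[k',n]}\cdot H^{[k,0]}\subseteq H^{[k,n]}$, returns $g\in U_n^{(k)}$, forcing $U_n^{(k)}=\uF^{(k)}$. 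This arity bookkeeping, which transfers properness from one arity to all larger ones, is the crux of the proposition.

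For the converse, assume $\uF$ has the Bergman property and $\cf(\uF)>\aleph_0$, and suppose for contradiction that there is a countable strong cofinal chain $(U_i)_{i<\aleph_0}$, with $k_0$ as in condition~(2). I would split into two cases according to the clones $\langle U_i\rangle_{O_A}$. If some $\langle U_{i_0}\rangle_{O_A}=\uF$, then $U_{i_0}$ is a generating set; for any $n$ Lemma~\ref{strong2} provides $j$ with $U_{i_0}^{[k_0,n]}\subseteq U_j$, while condition~(2) gives $U_j^{(k_0)}\subsetneq\uF^{(k_0)}$, so $U_{i_0}^{[k_0,n]}\ne\uF^{(k_0)}$ for every $n$; this contradicts $k_0$-Cayley-boundedness with respect to $U_{i_0}$, hence the Bergman property. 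Otherwise $\langle U_i\rangle_{O_A}\subsetneq\uF$ for all $i$, and then $(\langle U_i\rangle_{O_A})_{i<\aleph_0}$ is an increasing chain of proper subclones whose union is $\uF$, witnessing $\cf(\uF)\le\aleph_0$ and contradicting the hypothesis. Either way a contradiction results, so $\scf(\uF)>\aleph_0$, completing the proof.
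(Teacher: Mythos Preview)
Your proposal is correct and follows essentially the same approach as the paper's proof. Both directions use the identical key constructions: in ``$\Rightarrow$'' the chain $U_n=\bigcup_l H^{[l,n]}$ together with the arity-propagation trick via Lemma~\ref{strong} (the paper writes this as $\uF^{(k)}=\uF^{(l)}\cdot J_A^{(k)}=U^{[l,j]}\cdot U^{[k,0]}\subseteq U^{[k,j]}$, which is exactly your padding-and-collapsing argument), and in ``$\Leftarrow$'' the combination of $\cf(\uF)>\aleph_0$ to force some $U_i$ to generate, then Bergman plus Lemma~\ref{strong2} for the contradiction; your explicit case split in the converse merely unpacks what the paper does in one line by invoking cofinality directly.
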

\begin{proof} 
	``$\Rightarrow$'' Since $\scf(\uF)>\aleph_0$, we also have $\cf(\uF)>\aleph_0$. Let $U$ be a generating set of $\uF$, and suppose that $\uF$ is not $k$-Cayley-bounded with respect to $U$ --- i.e. for all $i\in\bN$ we have $U^{[k,i]}\subsetneq \uF^{(k)}$. Then we also have that for all $l>k$ and for all $i\in\bN$ we have  $U^{[l,i]}\subsetneq \uF^{(l)}$, for if $U^{[l,j]}= \uF^{(l)}$ for some $l>k$ and some $j$, then by Lemma~\ref{strong}, we have $\uF^{(k)}= \uF^{(l)}\cdot J_A^{(k)} = U^{[l,j]}\cdot U^{[k,0]}\subseteq U^{[k,j]}$ --- contradiction.
	
	We define $U_i:=\bigcup_{k\in\bN\setminus\{0\}} U^{[k,i]}$. Then $(U_i)_{i\in\bN}$ is a chain of proper subsets of $\uF$. Since $U$ is a generating set of $\uF$, we also have $\bigcup_{i\in\bN} U_i =\uF$. 
	By Lemma~\ref{strong}, we have 
	\begin{align*}
		U_i^{[k,1]}& = U_i\cdot J_A^{(k)}\cup J_A^{(k)}\\
		 &=\left(\bigcup_{l\in\bN\setminus\{0\}} U^{[l,i]}\right)\cdot U^{[k,0]}\cup J_A^{(k)}\\
		&\subseteq U^{[k,i]}\cup J_A^{(k)}=U^{[k,i]} 
	\end{align*}  
	Hence, again using Lemma~\ref{strong},
	\begin{align*}
		U_i^{[k,2]}&= U_i\cdot U_i^{[k,1]}\cup U_i^{[k,1]}\\
		&\subseteq U_i\cdot U^{[k,i]}\cup U^{[k,i]}\\
		&=\left(\bigcup_{l\in\bN\setminus\{0\}} U^{[l,i]}\right)\cdot U^{[k,i]}\cup U^{[k,i]}\\
		&\subseteq U^{[k,2i]}\cup U^{[k,i]}\\
		&= U^{[k,2i]}\subseteq U_{2i} 
	\end{align*}
	Hence $(U_i)_{i\in\bN}$ is a strong cofinal chain of length $\aleph_0$ for $\uF$ --- which is a contradiction with the assumption that $\scf(\uF)>\aleph_0$. It follows that $\uF$ is $k$-Cayley-bounded with respect to $U$, and we conclude that $\uF$ has the Bergman-property. 
	
	``$\Leftarrow$'' Suppose that $\uF$ has the Bergman-property and that $\cf(\uF)>\aleph_0$ but that $\scf(\uF)=\aleph_0$. Let $(U_i)_{i\in\bN}$ be a strong cofinal chain of length $\aleph_0$ for $\uF$. Then in particular we have that 
	$\bigcup_{i\in\bN}U_i=\uF$. However, then we also have 
	$\bigcup_{i\in\bN}
	\langle U_i\rangle_{O_A}= \uF$. Because $\cf(\uF)>\aleph_0$, there exists some $j\in\bN$ such that $\langle U_j\rangle_{O_A}=\uF$. In other words, $U_j$ is a generating set for $\uF$. Since $\uF$ has the Bergman-property, for every $k\in\bN\setminus\{0\}$ there exists an $r\in\bN\setminus\{0\}$ such that $\uF^{(k)}=U_j^{[k,r]}$. Since $(U_i)_{i\in\bN}$ is a strong cofinal chain, from Lemma~\ref{strong2}, it follows that for every $k\in\bN\setminus\{0\}$ there exists some $l\in\bN$ such that $\uF^{(k)}=U_j^{[k,r]}\subseteq U_l$ --- contradiction. Thus $\scf(\uF)>\aleph_0$. 
\end{proof}

\begin{theorem}\label{newscf}
	Let $\cC$ be a strict \Fraisse-class with \Fraisse-limit $\bU$, such that 
	\begin{enumerate}
		\item $\cC$ is closed with respect to finite products,
		\item $\cC$ has the HAP,
		\item the coproduct of countably many copies of $\bU$ in $(\overline{\cC},\to)$ exists,
		\item $\End\bU$ is not finitely generated.
	\end{enumerate}
	Then $\Pol\bU$ has the Bergman property.
\end{theorem}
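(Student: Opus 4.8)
The plan is to reduce the Bergman property to a statement about \emph{strong} cofinality and then transfer the known strong cofinality of $\End\bU$ up to the whole clone. Concretely, I would first record the two inputs supplied by earlier results. On the one hand, since $\cC$ is a strict \Fraisse-class with the (HAP) for which the coproduct of $\aleph_0$ copies of $\bU$ exists in $(\overline{\cC},\to)$ and $\End\bU$ is not finitely generated, Theorem~\ref{dol} gives that $\End\bU$ has uncountable strong cofinality. On the other hand, since $\cC$ is closed under finite products, has the (HAP), and (being strict) has the amalgamated extension property, the proof of Theorem~\ref{clgen} produces a universal homogeneous retraction $r\colon\bU\epito\bU^2$ together with a section $\epsilon\colon\bU^2\injto\bU$; by Proposition~\ref{PolGen} we then have $\rank(\Pol\bU:\End\bU)=1$ with witness $\{\epsilon\}$. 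The crucial refinement I would extract from the explicit construction in Proposition~\ref{PolGen} is \emph{arity-wise bounded depth}: for each $k$ the function $\epsilon_{k-1}$ is a fixed term in $\epsilon$ and projections, and every $f\in\Pol^{(k)}\bU$ equals $(f\circ r_{k-1})\circ\epsilon_{k-1}$ with $f\circ r_{k-1}\in\End\bU$; hence there is a constant $c_k$, depending only on $k$, with $\Pol^{(k)}\bU\subseteq(\End\bU\cup\{\epsilon\})^{[k,c_k]}$.

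Next I would argue by contradiction, setting $\uF:=\Pol\bU$ and $\uS:=\uF^{(1)}=\End\bU$, and assuming $\scf(\uF)=\aleph_0$. Fixing a strong cofinal chain $(U_i)_{i\in\bN}$ for $\uF$ with threshold $k_0$ as in Definition~\ref{scfdef}, I would test the subsets $\uS_i:=U_i\cap\uS=U_i^{(1)}$ for being a strong cofinal chain of the \emph{semigroup} $\uS$. Their union is $\uS$, and the square condition is immediate: choosing $j$ with $U_i^{[k,2]}\subseteq U_j$ for all $k$ (Definition~\ref{scfdef}\eqref{pntthree}) one gets $\uS_i\uS_i\subseteq U_i^{[1,2]}\cap\uS\subseteq\uS_j$, since the composite of two unary members of $U_i$ is a depth-$2$ term in $U_i$.

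The main obstacle, and the only place the clone structure really enters, is \emph{properness} of the $\uS_i$. I would rule out $\uS_{i_0}=\uS$ (i.e.\ $\End\bU\subseteq U_{i_0}$) as follows: picking $i_1$ with $\epsilon\in U_{i_1}$ and $p:=\max\{i_0,i_1\}$ gives $\End\bU\cup\{\epsilon\}\subseteq U_p$. Then for any fixed $k\ge k_0$ (we may take $k\ge 2$) the bounded-depth fact yields $\uF^{(k)}=(\End\bU\cup\{\epsilon\})^{[k,c_k]}\subseteq U_p^{[k,c_k]}$, whence $\uF^{(k)}=U_p^{[k,c_k]}$; applying Lemma~\ref{strong2} with $r=c_k$ produces some $j$ with $U_p^{[k,c_k]}\subseteq U_j$, so $U_j^{(k)}=\uF^{(k)}$, contradicting Definition~\ref{scfdef}(2). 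Hence every $\uS_i$ is proper, so $(\uS_i)_{i\in\bN}$ is a strong cofinal chain of length $\aleph_0$ for $\End\bU$, contradicting Theorem~\ref{dol}.

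Finally I would conclude. The contradiction shows $\scf(\Pol\bU)>\aleph_0$. Since $\Pol\bU$ is non-finitely generated (it has uncountable cofinality by Theorem~\ref{clcf}, whose hypotheses coincide with ours), Proposition~\ref{bergman} immediately yields that $\Pol\bU$ has the Bergman property. The delicate point to get right is precisely the arity-wise bounded-depth refinement of Proposition~\ref{PolGen}, since mere finiteness of $\rank(\Pol\bU:\End\bU)$ says nothing about term depth; everything else is bookkeeping with the semigroup square condition and Lemma~\ref{strong2}.
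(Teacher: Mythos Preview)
Your proposal is correct and follows essentially the same strategy as the paper: assume $\scf(\Pol\bU)=\aleph_0$, intersect a strong cofinal chain $(U_i)$ with $\End\bU$, use $\scf(\End\bU)>\aleph_0$ (Theorem~\ref{dol}) to force $\End\bU\subseteq U_{j_0}$ for some $j_0$, and then use the explicit generation of $\Pol^{(k)}\bU$ from $\End\bU$ and a section to contradict condition~(2) of Definition~\ref{scfdef} via Lemma~\ref{strong2}, before concluding with Proposition~\ref{bergman}.

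There are two small but genuine simplifications in your version compared to the paper. First, the paper picks, for each $k$, an index $j_k$ with $\epsilon_k\in U_{j_k}$ and works with a uniform depth bound (depth~$3$), whereas you observe once that $\epsilon_{k-1}$ is a fixed term of depth $c_k$ in the single binary $\epsilon$ and projections; this lets you pick a single index $p$ (independent of $k$) containing $\End\bU\cup\{\epsilon\}$ and get $\Pol^{(k)}\bU\subseteq U_p^{[k,c_k]}$. Second, the paper routes each $f\circ r_k$ through the universal homogeneous endomorphism $u$ (writing $f\circ r_k=u\circ\iota_f$), but this step is superfluous: since $f\circ r_{k-1}\in\End\bU\subseteq U_p$ already, the composite $(f\circ r_{k-1})\circ\epsilon_{k-1}$ lies in $U_p^{[k,c_k]}$ directly. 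Your framing as a two-case contradiction (either some $\uS_{i_0}=\End\bU$, yielding the depth-contradiction, or all $\uS_i$ are proper, contradicting $\scf(\End\bU)>\aleph_0$) is logically equivalent to the paper's linear argument but makes the role of properness more explicit. Both the paper and you tacitly use that $\Pol\bU$ is not finitely generated when invoking Proposition~\ref{bergman}; this is harmless since $\End\bU$ is uncountable (having uncountable cofinality) and hence so is $\Pol\bU$.
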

\begin{proof}
	First we note, that by Theorem~\ref{hapext}, $\bU$ has a universal homogeneous endomorphism $u$. 
	
	Secondly, with the same arguments as in the proofs of Theorem~\ref{clgen} and Proposition~\ref{PolGen}, we see that there exist retractions $r_i:\bU\epito\bU^{i+1}$ ($i\in\bN\setminus\{0\}$) with corresponding sections $\epsilon_{i}:\bU^{i+1}\injto\bU$.
	
	Thirdly, by Theorem~\ref{dol} we have that $\scf(\End\bU)>\aleph_0$.

	We already know from Theorem~\ref{clcf} that $\Pol\bU$ has uncountable cofinality. Hence, by  Proposition~\ref{bergman}, $\bU$ has the Bergman property if and only if it has uncountable strong cofinality. Suppose now, that $\Pol \bU$ has countable strong cofinality. Let $(U_i)_{i\in\bN}$ be a strong cofinal chain for $\Pol\bU$. Define $V_i:=U_i\cap\End(\bU)$.	Clearly, since $\bigcup_{i\in\bN} U_i =\Pol\bU$, it follows that also $\bigcup_{i\in\bN} V_i =\End\bU$. Let now $i\in\bN$ and let $j\in\bN$ be given such that $U_i^{[k,2]}\subseteq U_j$, for all $k\in\bN\setminus\{0\}$. Let $s_1,s_2\in V_i$. Then 
	\[s_1\circ s_2\in V_i^2\subseteq U_i^{[1,2]}\subseteq U_j\cap\End(\bU)= V_j.\]
	It follows that $V_i\cdot V_i\subseteq V_j$. Since $\End\bU$ has uncountable strong cofinality, there has to exist some $j_0\in\bN$ such that $V_{j_0}=\End\bU$. In particular, $\End(\bU)\subseteq U_{j_0}$.

	Let now $k\in\bN\setminus\{0\}$. Then we can find some $j_k>j_0$ such that $U_{j_k}$ contains $\epsilon_k$.  Let $f\in\Pol^{(k+1)}\bU$. Then $f\circ r_{k}\in\End\bU\subseteq U_j\subseteq U_{j_k}$. Since $u$ is a universal endomorphism of $\bU$, there exists an embedding $\iota_f:\bU\injto\bU$ such that $f\circ r_k=u\circ \iota_f$. Define $\hat\iota_f:\bU^{k+1}\injto\bA$ through $\hat\iota_f:=\iota_f\circ\epsilon_k$. Since $\iota_f\in V_{j_k}\subseteq U_{j_k}^{[1,1]}$, and since $\epsilon_k\in U_{j_k}^{[k+1,1]}$, it follows from Lemma~\ref{strong} that $\hat\iota_f\in U_{j_k}^{[k+1,2]}$. Then  $u\circ\hat\iota_f = u\circ\iota_f\circ\epsilon_k = f\circ r_k\circ\epsilon_k=f$. Since $u\in V_{j_k}\subseteq U_{j_k}^{[1,1]}$,  it follows from Lemma~\ref{strong} that $f\in U_{j_k}^{[k+1,3]}$. Hence $U_{j_k}^{[k+1,3]}=\Pol^{(k+1)}\bU$. From Lemma~\ref{strong2}, it follows that there exists some $l\in\bN$ such that $U_{j_k}^{[k+1,3]}\subseteq U_l$ and hence $U_l^{(k)}=\Pol^{(k)}\bU$ --- contradiction. So $\Pol\bU$ has uncountable strong cofinality and hence the Bergman property. 
\end{proof}

\begin{example}\label{exscf}
	The conditions of Theorem~\ref{newscf} are the same as the conditions of Theorem~\ref{clcf}. Hence all the structures from Example~\ref{excf} also have the Bergman property:
	\begin{itemize}
		\item the Rado graph $R$,
		\item the countable generic poset $\mathbb{P}=(P,\le)$,
		\item the countable atomless Boolean algebra $\mathbb{B}$,
		\item the countable universal homogeneous semilattice $\Omega$,
		\item the countable universal homogeneous distributive lattice $\mathbb{D}$,
		\item the infinite-dimensional vector-space $\mathbb{F}^\omega$ for any countable field $\mathbb{F}$.
	\end{itemize}
\end{example}

\section{Some open Problems}
Unfortunately, our results from Section~\ref{genclones} do not apply to the polymorphism clone of $(\bQ,\le)$. Therefore we ask:
\begin{problem}
	Does $\Pol(\bQ,\le)$ have a generating set of bounded arity? What is its relative rank with respect to $\End(\bQ,\le)$, to $\Emb(\bQ,\le)$, or even to $\Aut(\bQ,\le)$?
\end{problem}

In Section~\ref{cofinality} the sufficient condition that we use to show that a given clone $\uF$ has uncountable cofinality relies on the knowledge of the cofinality of some subsemigroup $\uS$ such that $\uF$ has a finite relative rank with respect to $\uS$. For the polymorphism clone of the rational Urysohn space $\mathbb{U}_\bQ$ we know that $\rank(\mathbb{U}_\bQ:\End \mathbb{U}_\bQ)$ is finite.  Unfortunately, we do not know the cofinality of $\End \mathbb{U}_\bQ$ (this is formulated as Problem 4.6 in \cite{Dol11}). So we can not use Proposition~\ref{cfcond} in order to conclude that $\Pol\mathbb{U}_\bQ$ has uncountable cofinality:
\begin{problem}
	Does $\Pol\mathbb{U}_\bQ$ have uncountable cofinality? What is about the polymorphism clone of the rational Urysohn sphere or of $(\bQ,\le)$?
\end{problem}

The similar problem arrises for our results about the Bergman property of clones. The sufficient conditions for the Bergman property of polymorphism clones of homogeneous structures that are given in Theorem~\ref{newscf} contain Dolinka's sufficient conditions for the Bergman property of the endomorphism monoids of homogeneous structures (cf. \cite[Thm4.2]{Dol11}). Unfortunately, these conditions do not apply to the rational Urysohn space, the rational Urysohn-sphere or to $(\bQ,\le)$. Therefore we ask: 
\begin{problem}
	Does the polymorphism clone of the rational Urysohn space have the Bergman property? $(\bQ,\le)$ have the Bergman property? What about the polymorphism clones of the rational Urysohn-sphere or of $(\bQ,\le)$? 
\end{problem}


\end{document}